\newtheorem{theorem}{Theorem}[section]
\newtheorem{thmx}{Theorem}
\newtheorem{proposition}[theorem]{Proposition}
\newtheorem{lemma}[theorem]{Lemma}
\newtheorem{corollary}[theorem]{Corollary}
\theoremstyle{remark}
\newtheorem*{remark}{Remark}
\theoremstyle{remark}
\newtheorem*{notation}{Notation}
\renewcommand{\P}{\mathbb{P}}
\renewcommand{\O}{\mathcal{O}_C}
\newcommand{\OO}{\mathcal{O}}
\newcommand{\SU}{\mathcal{SU}_C(2)}
\newcommand{\SUr}{\mathcal{SU}_C(r)}
\newcommand{\SUw}{\mathcal{SU}_{C_w}(2)^{inv}}
\newcommand{\genSU}{\widehat{\mathcal{SU}}_C(2)}
\newcommand{\PB}{\P_B^{3g-6}}
\newcommand{\PD}{\P_D^{3g-2}}
\newcommand{\PN}{\P_N^{2g-2}}
\newcommand{\PcuatroN}{\P_N^{4}}
\newcommand{\p}{\varphi_D}
\newcommand{\pD}{\varphi_D}
\newcommand{\pPN}{\varphi_{D,N}}
\newcommand{\pPcuatroN}{\varphi_D|_{\PcuatroN}}
\newcommand{\pPB}{\varphi_D(\PB)}
\newcommand{\Kum}{\operatorname{Kum}}
\newcommand{\Ext}{\operatorname{Ext}}
\newcommand{\blowup}{\operatorname{Bl}}
\newcommand{\PGL}{\operatorname{PGL}}
\newcommand{\Cr}{\operatorname{Cr}}
\newcommand{\M}{\mathcal{M}}
\renewcommand{\L}{\mathcal{L}}
\newcommand{\I}{\mathcal{I}}
\newcommand{\Sec}{\operatorname{Sec}}
\newcommand{\pproj}{p_{\P_c}}
\newcommand{\Zeroes}{\operatorname{Zeroes}}
\newcommand{\Pic}{\operatorname{Pic}}
\newcommand{\Jac}{\operatorname{Jac}}
\newcommand{\MGIT}{\M_{0,2g}^{\operatorname{GIT}}}
\newcommand{\MGITn}{\M_{0,n}^{\operatorname{GIT}}}
\newcommand{\MGITsix}{\M_{0,6}^{\operatorname{GIT}}}
\newcommand{\MKnudn}{\overline{\M}_{0,n}}
\newcommand{\Hom}{\text{Hom}}
\renewcommand{\P}{\mathbb{P}}
\renewcommand{\O}{\mathcal{O}}
\newcommand{\Rlin}{\mathcal{R}}
\newcommand{\tto}{\dashrightarrow}
\renewcommand{\paragraph}{%
      \@startsection{paragraph}{4}%
      {\z@}{1ex \@plus 1ex \@minus .2ex}{-1em}%
      {\normalfont\normalsize\bfseries}%
}
\title{The hyperelliptic theta map and osculating projections.}
\author[M. Bolognesi]{Michele Bolognesi}
\address{Institut Montpellierain Alexander Grothendieck \\ %
Universit\'e de Montpellier \\ %
CNRS \\ %
Case Courrier 051 - Place Eug\`ene Bataillon \\ %
34095 Montpellier Cedex 5 \\ %
France}
\email{michele.bolognesi@umontpellier.fr}
\author{Néstor Fernández Vargas}
\address{Universit\'e de Rennes I, CNRS, IRMAR - UMR 6625, F-35000 Rennes, France}
\email{nestor.fernandez-vargas@univ-rennes1.fr}
\subjclass[2010]{Primary 14H60; Secondary 14K25}
\keywords{algebraic curves, vector bundles, non-abelian theta functions, moduli spaces, projective geometry}
\begin{document}

\begin{abstract}
  Let $C$ be a hyperelliptic curve of genus $g \geq 3$. In this paper we give a new geomtric description of the theta map for moduli spaces of rank 2 semistable vector bundles on $C$ with trivial determinant.
  In orther to do this, we describe a fibration of (a birational model of) the moduli space, whose fibers are GIT quotients $(\mathbb{P}^1)^{2g}//\operatorname{PGL(2)}$. Then, we identify the restriction of the theta map to these GIT quotients with some explicit degree two osculating projection. 
  As a corollary of this construction, we obtain a birational inclusion of a fibration in Kummer $(g-1)$-varieties over $\mathbb{P}^g$ inside the ramification locus of the theta map.
\end{abstract}

\maketitle

{}\section{Introduction} \label{sec:introduction}
  Let $C$ be a complex smooth curve of genus $g \geq 3$ and $\SUr$ the (coarse) moduli space of semistable vector bundles of rank $r$ with trivial determinant on $C$. It is well known that this moduli space is a normal, projective, unirational variety of dimension $(r^2 - 1)(g - 1)$. The study of the projective geometry of moduli spaces of vector bundles in low rank and genus has produced some beautiful descriptions, frequently mingling constructions issued in the context of classical algebraic geometry and the geometry of Jacobians and theta functions (\cite{pauly_coble, ortega_coble, desale_ramanan}).


Let $\L$ be the determinant line bundle on $\SUr$ and $\varphi_\L: \SUr \dashrightarrow |\L|^*$ the map induced by global sections of $\L$. The linear system $|\L|^*$ is isomorphic to the $|r \Theta|$ linear series on the Jacobian variety $\Jac(C)$, by the first declination of \it strange duality \rm \cite{beauville_narasimhan_ramanan}. 
%
%
This way, we obtain a (in general) rational map
  
\begin{align*}
  \theta : \SUr \dashrightarrow |r \Theta|,
\end{align*} 

the clebrated theta map, which is canonically identified to $\varphi_{\L}$ \cite{beauville_narasimhan_ramanan}.

  Let us now fix $r = 2$. In this setting, the map $\theta$ is a finite morphism \cite{raynaud}. When $g = 2$, the map $\theta$ is an isomorphism onto $\P^3$ \cite{narasimhan_ramanan_moduli}.
  For $g \geq 3$, the map $\theta$ is an embedding if $C$ is non-hyperelliptic, and it is a 2:1 map if $C$ is hyperelliptic \cite{desale_ramanan, beauville_rang2, brivio_verra, vangeemen_izadi} (see Section \ref{sec:moduli_vector_bundles_theta} for more details). 
  

  The goal of this paper is to describe the geometry associated to the map $\theta$ in the case $r = 2$ when $C$ hyperelliptic. In the non-hyperelliptic case, the papers \cite{bolognesi_conicbundle} and \cite{alzati_bolognesi} outline a connection between the moduli space $\SU$ and the moduli space $\M_{0,n}$ of rational curves with $n$ ordered marked points. A generalization of \cite{alzati_bolognesi} for higher rank vector bundles has been given in \cite{bolognesi_brivio}.
  In the present work, we develop once again the link with the moduli space of pointed rational curves (more precisely with its GIT compactification $\M_{0,n}^{GIT}$). Thanks to some clever description of the GIT compactification in terms of linear systems on the projective space due to Kumar \cite{kumar}, this also offers a new geometric description of the $\theta$-map if $C$ is hyperelliptic. 

\smallskip  
  
  Let $C$ be a hyperelliptic curve of genus $g \geq 3$ and $D$ an effective divisor of degree $g$ on $C$. Let us consider the isomorphism classes of extensions

\begin{align*}
(e) \quad  0 \to \O(-D) \to E_e \to \O(D) \to 0.
\end{align*}
 
 These are classified by the projective space $$\PD := \P \Ext^1(\O(D), \O(-D)) = |K + 2D|^*,$$ where $K$ is the canonical divisor on $C$. Since the divisor $K + 2D$ is very ample, the linear system $|K + 2D|$ embeds the curve $C$ in $\PD$. Let $\PN$ be the span in $\PD$ of the $2g$ marked points $p_1, \ldots, p_{2g}$ on $C$ defined by the effective divisor $N \in |2D|$ ($\PN$ has also a precise description in terms of extensions, see Section \ref{sec3}).  
  
\begin{proposition} \label{prop:fibration_in_M}
  There exists a fibration $p_D:\SU \dashrightarrow |2D| \cong \P^g$ whose general fiber is birational to $\MGIT$. Moreover, we have:
  
  \begin{enumerate}
    \item For every generic divisor $N \in |2D|$, there exists a $2g$-pointed projective space $\PN$ and a rational dominant map $h_N: \PN \dashrightarrow p_D^{-1}(N)$ classifying extension classes, such that the fibers of $h_N$ are rational normal curves passing by the $2g$ marked points.
    \item The family of rational normal curves defined by $h_N$ is the universal family of rational curves over (an open subset of) the generic fiber $\MGIT$.
  \end{enumerate}
\end{proposition}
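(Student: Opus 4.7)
The plan is to construct both the fibration $p_D$ and the rational dominant map $h_N$ from the classifying rational map $\varphi_D : \PD \dashrightarrow \SU$, $[e] \mapsto [E_e]$, well-defined on the open locus where $E_e$ is semistable. First I would define $p_D$ on the level of bundles: for a general $E \in \SU$, Riemann--Roch gives $\chi(E(D)) = 2\deg D - 2g + 2 = 2$ and generic vanishing forces $h^0(E(D)) = 2$, so the wedge product
\[
\wedge^2 H^0(E(D)) \hookrightarrow H^0(\wedge^2 E(D)) = H^0(\O(2D))
\]
cuts out a one-dimensional subspace, i.e., a divisor $p_D(E) \in |2D| \cong \P^g$. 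The dimension count $\dim \SU - \dim |2D| = (3g-3) - g = 2g - 3 = \dim \MGIT$ is consistent with the claim on fibers.

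The key to pinning down the fibers is the compatibility $p_D \circ \varphi_D = \pi$ for an auxiliary rational map $\pi : \PD \dashrightarrow |2D|$. From the long exact sequence of $0 \to \O \to E_e(D) \to \O(2D) \to 0$ (the defining extension twisted by $\O(D)$), one sees that $h^0(E_e(D)) \geq 2$ if and only if the connecting cup product $\cup e : H^0(\O(2D)) \to H^1(\O)$ has a nonzero kernel; for generic $e$ this kernel is a line, defining $\pi(e) \in |2D|$. By Serre duality, $\pi^{-1}(N)$ is the projective annihilator of the $g$-dimensional image of the multiplication map $H^0(K) \otimes \langle s_N \rangle \to H^0(K+2D)$, hence is a $\P^{2g-2}$. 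Since the $2g$ points of $N$ viewed in $\PD = |K+2D|^*$ impose precisely the vanishing conditions cut out by $H^0(K+2D-N) \cong H^0(K) \cdot s_N$, this fiber coincides with their linear span $\PN$. Setting $h_N := \varphi_D|_{\PN}$ then yields a rational dominant map onto $p_D^{-1}(N)$.

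The heart of the proof is identifying the fibers of $h_N$ as rational normal curves through the $2g$ marked points. For a general $E \in p_D^{-1}(N)$, the fiber $h_N^{-1}(E)$ is parametrized by $\P H^0(E(D)) \cong \P^1$ via $[s] \mapsto [e_s]$, where $e_s$ is the extension class attached to the saturated inclusion $\O(-D) \hookrightarrow E$ induced by $s$. The $2g$ distinguished sections vanishing one each at some $p_i \in N$ correspond to non-saturated inclusions factoring through $\O(-D+p_i) \hookrightarrow E$, whose limiting extension classes in $\PD$ are precisely the points $p_i \in C$, giving the passage through the marked points. To conclude that the image is a rational normal curve of degree $2g-2$ in $\P^{2g-2}$, I would compute the pullback of $\O_{\PN}(1)$ via the cup-product presentation of the map, expecting $\O_{\P^1}(2g-2)$, and then verify non-degeneracy. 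This degree-and-non-degeneracy computation is the main technical obstacle; one might alternatively appeal to the known foliation of $\P^{2g-2}$ by RNCs through $2g$ general points and identify the fibers of $h_N$ with its leaves via a dimension match.

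With (1) in hand, part (2) follows from the classical realization of $\MGIT$ as the moduli of rational normal curves through $2g$ ordered general points in $\P^{2g-2}$: any such curve, parametrized by $\P^1 \to \P^{2g-2}$, pulls the $p_i$ back to $2g$ ordered points on $\P^1$ determined modulo $\PGL(2)$, i.e., a point of $(\P^1)^{2g} /\!/ \PGL(2) = \MGIT$. The dimension match $\dim \MGIT + 1 = (2g-3) + 1 = 2g-2 = \dim \PN$ shows that these RNCs generically foliate $\PN$, so the induced rational map $\PN \dashrightarrow \MGIT$ descends to a birational equivalence between $p_D^{-1}(N)$ and $\MGIT$, realizing $h_N$ as the universal family of marked rational curves over the corresponding open subset of $\MGIT$.
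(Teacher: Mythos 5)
Your construction of $p_D$ via $E \mapsto \Zeroes(s_1\wedge s_2)$, and your identification of the locus of extensions $e$ with $s_N\in\ker(\cup\, e)$ with the span $\PN=\langle N\rangle$, are correct and essentially reproduce the paper's Theorem \ref{th:proyeccion} and Proposition \ref{prop:fibre=image} (the paper routes the second step through Lange--Narasimhan rather than the connecting homomorphism, but the Serre-duality computation is the same). The problem is concentrated exactly where you say it is, and you do not close it: you never prove that the image of $\P H^0(E(D))\to\PN$, $[s]\mapsto[e_s]$, is a non-degenerate curve of degree $2g-2$, nor that the fibers of $f_D|_{\PN}$ are \emph{exactly} the rational normal curves through the $2g$ points. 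The fallback you offer --- identify the fibers of $h_N$ with the leaves of the known foliation of $\PN$ by rational normal curves through $2g$ general points ``via a dimension match'' --- does not work as stated: two one-dimensional families of curves can both sweep out $\PN$ and both map to $(2g-3)$-dimensional bases without coinciding. What is needed is that $f_D$ is \emph{constant} along every rational normal curve through $p_1,\dots,p_{2g}$, and no step of your argument supplies this.

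The paper obtains precisely that missing ingredient from the linear-system side, which is absent from your proposal. By Bertram's theorem (Theorem \ref{th:bertram}) the map $\p=\theta\circ f_D$ is given by $|\I_C^{g-1}(g)|=|\I_{\Sec^{g-2}(C)}(g)|$; its restriction to $\PN$ is therefore a subsystem of the ``forgetful'' linear system $|\I_{\Sec^{g-2}(N)}(g)|$ (Lemma \ref{lem:base_locus}), and the map defined by that system is identified, via Kapranov's Cremona/blow-up description and Kumar's system $\Omega$, with a dominant map onto $\MGIT$ contracting exactly the rational normal curves through the $2g$ points (Propositions \ref{prop:h_N_is_composition} and \ref{prop:RNC}, imported from \cite{bolognesi_BLMS} and \cite{kapranov_veronese}). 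Since the fiber of $f_D$ over a general stable bundle is an irreducible curve ($\P H^0(E(D))$, cf.\ Proposition \ref{prop:exceptional_fibers}), this factorization forces the fibers of $f_D|_{\PN}$ to coincide generically with those rational normal curves, and both parts of the Proposition, including the birational identification $p_D^{-1}(N)\sim\MGIT$, follow. If you prefer your more direct route, you must actually carry out the degree and non-degeneracy computation for $[s]\mapsto[e_s]$ (the secant construction of \cite{lange_narasimhan} is the natural tool, and it also makes precise your ``limiting extension class'' claim at the points $p_i$); as written, the central assertion of the Proposition is announced rather than proved.
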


Our aim is to describe the map $\theta$ restricted to the generic fibers of the fibration $p_D$. To this end, the following construction is crucial:

  Let $p, i(p)$ be two involution-conjugate points in $C$; and consider the line $l \subset \PD$ secant to $C$ and passing through $p$ and $i(p)$. We show that this line intersects the subspace $\PN$ in a point. Moreover, the locus $\Gamma \subset \PN$ of these intersections as we vary $p\in C$ is a rational normal curve passing by the points $p_1, \ldots, p_{2g}$. It follows from Proposition \ref{prop:fibration_in_M}, the map $h_N$ contracts the curve $\Gamma$ onto a point $P \in p_D^{-1}(N) \cong \M_{0,2g}$.

In \cite{kumar}, Kumar defines the linear system $\Omega$ of $(g-1)$-forms on $\P^{2g - 3}$ vanishing with multiplicity $g-2$ at $2g - 1$ general points. He shows that $\Gamma$ induces a birational map $i_{\Omega}: \P^{2g - 3} \dashrightarrow \MGIT$ onto the GIT compactification of the moduli space $\M_{0,2g}$. The partial linear system $\Lambda \subset \Omega$ of forms vanishing with multiplicity $g - 2$ at an additional general point $e \in \P^{2g - 3}$ induces a projection $\kappa : \MGIT \dashrightarrow | \Lambda |^*$. More precisely, $\kappa$ is a 2-to-1 osculating projection centered on the point $w = i_{\Omega}(e)$. We describe birationally the restrictions of $\theta$ to the fibers $p_D^{-1}(N)$ using Kumar's map: 

\begin{thmx} \label{th:theta_is_kappa}
  The map $\theta$ restricted to the fibers $p_D^{-1}(N)$ is the osculating projection $\kappa$ centered at the point $P = h_N(\Gamma)$, up to composition with a birational map.
\end{thmx}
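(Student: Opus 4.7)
The plan is to compare the defining linear systems of $\theta|_{p_D^{-1}(N)}$ and $\kappa$ after pulling both back to $\P^{2g-3}$ through the birational maps $h_N$ and $i_\Omega$.

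First, I would pull back $\theta|_{p_D^{-1}(N)}$ via $h_N$ to obtain $\theta \circ h_N \colon \PN \dashrightarrow |2\Theta|$. Using the long exact cohomology sequence of the extension $0 \to \O(-D) \to E_e \to \O(D) \to 0$, for a generic $M \in \Pic^{g-1}(C)$ the theta divisor of $E_e$ is cut out by the vanishing of $\det(\cup e)$, where $\cup e \colon H^0(M \otimes \O(D)) \to H^1(M \otimes \O(-D))$ is cup product with $e$. Both cohomology groups have dimension $g$ for generic $M$, so $\det(\cup e)$ is a polynomial of degree $g$ in $e$, exhibiting $\theta \circ h_N$ as defined by a sub-linear system $V \subset H^0(\PN, \O_{\PN}(g))$ whose base locus is controlled by the $2g$ marked points on $\PN$ (at which the associated extensions degenerate).

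Next, Proposition \ref{prop:fibration_in_M}(2) ensures $V$ is constant on the rational normal curve fibers of $h_N$, so it descends to a linear system on $\MGIT \cong p_D^{-1}(N)$ inducing $\theta|_{p_D^{-1}(N)}$. Pre-composing with Kumar's $i_\Omega \colon \P^{2g-3} \dashrightarrow \MGIT$, the problem reduces to matching the linear system of $\theta|_{p_D^{-1}(N)} \circ i_\Omega$ with Kumar's $\Lambda$, i.e., $(g-1)$-forms on $\P^{2g-3}$ vanishing with multiplicity $g-2$ at the $2g-1$ general points defining $\Omega$ and at the extra point $e = i_\Omega^{-1}(P)$. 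The first set of multiplicities will come from the base locus of $V$ at the $2g$ marked points of $\PN$, combined with the base locus built into $i_\Omega$. The crucial multiplicity $g-2$ at $e$ will reflect the contraction of the rational normal curve $\Gamma$ onto $P$ by $h_N$: this collapse forces $\theta|_{p_D^{-1}(N)}$ to have an osculating cone at $P$ of the expected dimension, which translates (via $i_\Omega$) to the required vanishing at $e$.

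The main obstacle will be pinning down the multiplicity $g-2$ at $e$ precisely; this requires a careful analysis of the osculating geometry of $i_\Omega(\P^{2g-3})$ at $P$, and of how the contraction of $\Gamma$ by $h_N$ produces such an osculating condition. Once the two linear systems on $\P^{2g-3}$ are shown to coincide (up to a linear automorphism of the target), the rational maps $\theta|_{p_D^{-1}(N)}$ and $\kappa$ must coincide up to composition with a birational map, giving the theorem. A built-in consistency check is that both maps are generically 2-to-1 onto their images (for $\theta$, by Desale--Ramanan in the hyperelliptic case; for $\kappa$, by Kumar), which must be automatic once the linear systems are identified.
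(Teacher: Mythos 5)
Your high-level architecture matches the paper's: restrict the degree-$g$ linear system of Bertram's Theorem \ref{th:bertram} to $\PN$, factor through $h_N$, transport to $\P^{2g-3}$ via $i_\Omega$, and compare with Kumar's $\Lambda$. But the two points you flag or gloss over are exactly where the paper's proof does its real work, and your plan for each is either missing or would not go through as stated.

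First, the multiplicity $g-2$ at $e$. You correctly identify this as ``the main obstacle'' but offer no mechanism, only the hope that the contraction of $\Gamma$ ``forces'' an osculating cone of the right order. The paper's mechanism is elementary and does not require analyzing the osculating geometry of $i_\Omega(\P^{2g-3})$ at $P$ at all: a form of degree $r$ vanishing at two points with multiplicities $l_1,l_2$ vanishes along the line they span with multiplicity at least $l_1+l_2-r$ (Lemma \ref{lemma:vanishing_forms}). Applied to a degree-$g$ form vanishing to order $g-1$ on $C$, hence to order $g-1$ at both $p$ and $i(p)$, this gives vanishing of order $2(g-1)-g=g-2$ along each hyperelliptic secant line $\overline{p\,i(p)}$, hence along $\Gamma$, which is swept out by the intersections of these lines with $\PN$ (Lemma \ref{lem:gamma}). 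This yields the inclusion $\Rlin\subset\I(\Gamma)$ and hence the factorization through the $(g-3)$-osculating projection $\pi_N$ centered at $h_N(\Gamma)$; identifying $\pi_N$ with $\kappa$ is then a direct pullback of linear systems through $i_\Omega$, as you propose.

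Second, and more seriously: your closing step is to show that the two linear systems on $\P^{2g-3}$ \emph{coincide}, and you treat the agreement of degrees (both maps $2$-to-$1$) as a mere consistency check. This is backwards. The paper only establishes the chain of inclusions $\Rlin\subset|\I_{\Sec^N}(g)|\subset\I(\Gamma)$, and indeed its own Section \ref{sec:lowgenera} shows that for $g=4,5$ one can only say $\Rlin$ is a (possibly proper) subsystem of the one defining $\kappa\circ h_N$; exact equality of linear systems is only proved for $g=3$. What rescues the theorem is precisely the degree count you relegate to a sanity check: the resolution $\widetilde{\p}=\theta\circ\widetilde{f_D}$ is a Stein factorization ($\widetilde{f_D}$ has connected fibers, $\theta$ is finite of degree $2$), while the restriction to $\PN$ factors as $l_N\circ\pi_N\circ h_N$ with $h_N$ having connected fibers (rational normal curves) and $\pi_N$ of degree $2$; uniqueness of the Stein factorization then forces the residual map $l_N$ to be birational. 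Without this argument, the possible strict inclusion $\Rlin\subsetneq\I(\Gamma)$ leaves an unidentified finite map between $\theta|_{p_D^{-1}(N)}$ and $\kappa$, and your proof does not close.
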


  Furthermore, the image of $\kappa$ is a connected component of the moduli space $\SUw$ of hyperelliptic invariant semistable vector bundles with trivial determinant on $C_w$, where $C_w$ is the hyperelliptic 2-to-1 cover of $\P^1$ ramifying over the $2g$ points defined by $w$. He also proves that the ramification locus of the map $\kappa$ is the Kummer variety $\Kum(C_w) \subset \SUw$.
  These results, combined with Theorem \ref{th:theta_is_kappa}, allow us to give a quite accurate description of the ramification locus of the map $\theta$:
  
\begin{thmx}
  The ramification locus of the map $\theta$ has an irreducible component birational to a fibration in Kummer varieties of dimension $g-1$ over $|2D| \cong \P^g$.
\end{thmx}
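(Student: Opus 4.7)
The plan is to analyse the ramification of $\theta$ fiberwise along the fibration $p_D: \SU \dashrightarrow |2D| \cong \P^g$ of Proposition~\ref{prop:fibration_in_M}, and then to assemble the fiberwise ramification loci into a family over $\P^g$. On a generic fiber $p_D^{-1}(N)$, Theorem~\ref{th:theta_is_kappa} identifies $\theta|_{p_D^{-1}(N)}$ (up to a birational map) with Kumar's $2{:}1$ osculating projection $\kappa$ centered at $P = h_N(\Gamma)$. By the theorem of Kumar recalled just after Theorem~\ref{th:theta_is_kappa}, the ramification locus of $\kappa$ is the Kummer variety $\Kum(C_w) \subset \SUw$, of dimension $g-1$, where $C_w$ is the hyperelliptic double cover of $\P^1$ branched at the $2g$ points cut out on $\PN$ by the curve $\Gamma$. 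Hence on each generic fiber of $p_D$, the ramification locus of $\theta$ is birational to a Kummer $(g-1)$-variety.

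As $N$ varies in $|2D|$, the $2g$ marked points $p_1, \ldots, p_{2g}$ move algebraically with $N$, and so do the double cover $C_w$, its Jacobian, and the associated Kummer variety. This yields a rational family $\mathcal{K} \dashrightarrow |2D|$ of Kummer $(g-1)$-varieties, together with a rational map $\mathcal{K} \dashrightarrow \SU$ that sends each fiber onto the corresponding fiberwise ramification locus of $\theta$. Let $R \subset \SU$ be the closure of the image of this map: by construction $R$ is irreducible of dimension $2g-1$, birational to $\mathcal{K}$, and hence to a fibration in Kummer $(g-1)$-varieties over $\P^g$.

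To conclude, I would argue that $R$ is in fact an irreducible component of the ramification locus of $\theta$. The $2{:}1$ structure of $\theta$ is governed by a birational involution $\sigma$ on $\SU$ which preserves the fibration $p_D$ (its restriction to each generic fiber is the deck transformation of $\kappa$), so the fiberwise fixed locus of $\sigma$ equals the fiberwise ramification of $\theta$. Since $R$ is irreducible, dominates $|2D|$, and its generic fibers over $|2D|$ coincide with these fiberwise ramification loci, it must be an irreducible component of the ramification locus. The main obstacle I expect is the compatibility, across the family, of the birational identifications of Theorem~\ref{th:theta_is_kappa}: one must verify that they can be chosen to vary algebraically with $N$, which should follow from the relative nature of the extension construction in Proposition~\ref{prop:fibration_in_M} together with the algebraic dependence of Kumar's construction on its input data.
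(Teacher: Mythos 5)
Your proposal is correct and follows essentially the same route as the paper: the paper also deduces this statement directly from the fiberwise identification of $\theta|_{p_D^{-1}(N)}$ with Kumar's osculating projection $\kappa$ (Theorem~\ref{th:theta_is_kappa} together with Theorem~\ref{th:kumar_double_cover}), which ramifies along $\Kum(C_w)$, and then lets $N$ vary over $|2D|$. You are in fact more explicit than the paper about the globalization step (assembling the family $\mathcal{K}$ and checking that its image is an irreducible component of the ramification locus), which the paper leaves implicit.
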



\medskip



In low genus we are able to give a more precise description of the Theta map and its interplay with maps classifying extensions. Let $f_D: \PD \dashrightarrow \SU$ denote the natural map sending an extension class onto its rank 2 vector bundle and let us define $\pD$ as $\theta \circ f_D$. 

\begin{thmx}
  Let $C$ be a hyperelliptic curve of genus 3. Then, for generic $N$, the restriction of $\pD$ to the subspace $\PN$ is exactly the composition $\kappa \circ h_N$. If $g=4$ or $5$, then $\pD$ is defined by a (possibly equal) linear subsystem of the one defining $\kappa \circ h_N$, and set-theoretically the base loci of the two linear systems coincide.
\end{thmx}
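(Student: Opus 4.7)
The plan is to compute both linear systems defining $\varphi_D|_{\PN}$ and $\kappa \circ h_N$ inside $H^0(\PN, \O_{\PN}(d))$ for an explicit common degree $d$, and compare them. Theorem \ref{th:theta_is_kappa} already gives that on the fiber $p_D^{-1}(N)$ the rational maps $\theta$ and $\kappa$ agree up to a birational automorphism $\alpha$ of the target; the present statement is the stronger linear-algebraic refinement, asserting that $\alpha$ acts by a linear projection on the sections involved.

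For the linear system defining $\varphi_D|_{\PN}$, I would invoke the classical description, going back to Beauville and Bertram, of the pullback of theta sections through the extension classifying map $f_D$: for $D$ of degree $g$, the composition $\varphi_D = \theta \circ f_D$ is defined on $\PD$ by an explicit linear system of forms of some degree $d$ vanishing along the curve $C \subset \PD$ with the appropriate multiplicity. Since $C \cap \PN = \{p_1, \dots, p_{2g}\}$, restricting this linear system to the linear subspace $\PN$ produces a sublinear system of $H^0(\PN, \O_{\PN}(d))$ with prescribed vanishing at the $2g$ marked points, plus any further contact conditions coming from the way $C$ sits inside $\PD$ along $\PN$.

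For the linear system defining $\kappa \circ h_N$, I would unfold both factors. The map $h_N$ is described in Proposition \ref{prop:fibration_in_M} via rational normal curves through $p_1, \dots, p_{2g}$, so its defining linear system on $\PN$ is explicit. Composing with $\kappa$, Kumar's osculating projection from $P = h_N(\Gamma)$ defined by the sublinear system $\Lambda$ of $(g-1)$-forms on $\P^{2g-3}$ with multiplicity $g-2$ at $2g$ points, and transporting $\Lambda$ back to $\PN$ along $h_N$, produces a second sublinear system of $H^0(\PN, \O_{\PN}(d))$ whose base locus, by construction, is the union of the $p_i$'s and the rational normal curve $\Gamma$.

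To conclude, I would compare the two sublinear systems. The inclusion of the $\varphi_D$ system inside the Kumar one follows directly from Theorem \ref{th:theta_is_kappa}: any theta section pulled back to $\PN$ is the composition $\alpha \circ \kappa \circ h_N$ of a regular function with the Kumar map, so the polynomial nature of the theta sections forces the pullback into the linear span of the $\kappa \circ h_N$ system. For $g=3$, a dimension count, matching the dimension of cubics on $\P^4$ with double vanishing at six general points against the known dimension of the theta image of $p_D^{-1}(N)$, will force equality. For $g=4, 5$, equality of base loci is more delicate: the theta system may be strictly smaller, but any extra base point of it would contradict the birational identification of Theorem \ref{th:theta_is_kappa} on the complement of $\Gamma \cup \{p_i\}$. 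The main obstacle is therefore the dimension count in the $g=3$ case (which requires an honest computation of the restricted theta system rather than of its image) and, for $g=4, 5$, ruling out sporadic base points of the smaller system by exploiting the fibration structure $p_D$.
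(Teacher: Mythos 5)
Your overall shape (compare the two linear systems; dimension count for $g=3$; base-locus analysis for $g=4,5$) matches the paper's, but the two steps you flag as delicate are exactly where your proposed arguments break down, and the tool the paper actually uses is absent from your plan. The paper's proof rests on an explicit description of $\Sec^N = \Sec^{g-2}(C)\cap\PN$: by Lemma \ref{lemma:nonempty_intersection}, any component of $\Sec^N$ beyond $\Sec^{g-2}(N)$ comes from an effective divisor $L_{g-1}$ of degree $g-1$, not contained in $N$, with $\dim|L_{g-1}|\geq 1$, and contributes a piece of dimension $\dim|L_{g-1}|-1$. Brill--Noether theory on a hyperelliptic curve then lets one list these divisors ($L_{g-1}=h+q_1+\cdots$ built from the hyperelliptic pencil), and a direct check shows that for $g=3,4,5$ the corresponding intersections with $\PN$ all lie on $\Gamma$, so $\Sec^N=\Sec^{g-2}(N)\cup\Gamma$ set-theoretically. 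This is what gives both the identification of the base loci for $g=4,5$ and the starting point for the $g=3$ equality. Your substitute for this — ``any extra base point would contradict the birational identification of Theorem \ref{th:theta_is_kappa}'' — does not work: the theorem only produces a birational map $l_N$ between the images, and a birational map is allowed to have indeterminacy, so an extra base point of the smaller system produces no contradiction whatsoever. For the same reason your derivation of the inclusion of the $\pD$-system into the $\kappa\circ h_N$-system from Theorem \ref{th:theta_is_kappa} is not valid (composing with a birational $\alpha$ does not keep you in the linear span of a linear system; think of a Cremona transformation). The correct, and elementary, argument is the one in Section \ref{sec:furtherbaselocus}: degree-$g$ forms vanishing to order $g-1$ on $C$ vanish to order $2(g-1)-g=g-2$ on every hyperelliptic secant line, hence to order $g-2$ on $\Gamma$, giving the chain $\Rlin\subset|\I_{\Sec^N}(g)|\subset\I(\Gamma)$ without any appeal to Theorem \ref{th:theta_is_kappa}.

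On the $g=3$ equality, your proposed dimension count is aimed at the wrong linear system: cubics on $\P^4$ with double points at the six points of $N$ form a strictly larger system than the relevant one, which is cubics vanishing on the fifteen secant lines of $N$ \emph{and} on $\Gamma$ (equivalently, the pullback of hyperplanes of $\MGITsix\subset\P^4$ through $P=h_N(\Gamma)$, a $\P^3$). The paper sidesteps the ``honest computation'' you anticipate by computing the dimension of the \emph{image} instead: the image of $\pPN$ cannot exceed dimension $3$ because the map factors through the projection of the Segre cubic from a point, and cannot be smaller because $\varphi_D$ surjects onto $\SU$ with one-dimensional generic fibre; since $\Rlin\subseteq|\I_{\Sec^N}(3)|$ and the latter is already a $\P^3$, equality follows. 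I would redirect your effort to (i) proving the multiplicity statement above for the inclusion, and (ii) carrying out the Brill--Noether enumeration of the special divisors $L_{g-1}$ for $g=3,4,5$; everything else then falls out.
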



  \begin{notation} $\P^n = \P(\mathbb{C}^{n+1})$ will denote the $n$-dimensional complex projective space of dim 1 subspaces.  Throughout this paper, a form $F$ of degree $r$ on $\P^n$ will denote element of the vector space $H^0(\P^n, \mathcal{O}_{\P^n}(r)) = \operatorname{Sym}^r(\mathbb{C}^{n+1})^*$. 
  If we fix a basis $x_0, \ldots, x_n$ of $(\mathbb{C}^{n+1})^*$, $F$ is simply a homogeneous polynomial of degree $r$ on $x_0, \ldots, x_n$. Most of the maps in this paper will be rational maps, hence we will often offend good taste by just dropping the adjective \em rational \rm. We apologize for that. 
\end{notation}

{}\section{Moduli of vector bundles} \label{sec:moduli}
  
  We briefly recall here some results about moduli of vector bundles. For a more detailed reference, see \cite{beauville_vectorbundles}.

\subsection{Moduli of vector bundles and the map $\theta$} \label{sec:moduli_vector_bundles_theta}

  Let $C$ be a smooth genus $g$ algebraic curve with $g \geq 2$. 
  Let us denote by $\Pic^d(C)$ the Picard variety of degree $d$ line bundles on $C$. The Jacobian of C is $\Jac(C) = \Pic^0(C)$. The canonical divisor $\Theta \subset \Pic^{g-1}(C)$ is defined set-theoretically as
$$\Theta := \{ L \in \Pic^{g-1}(C) \ | \ h^0(C,L) \not = 0 \}.$$
  Let $\SU$ be the moduli space of semistable rank 2 vector bundles on $C$ with trivial determinant. This variety parametrizes S-equivalence classes of such vector bundles.

  The Picard group $\Pic(\SU)$ is isomorphic to $\mathbb{Z}$, and it is generated by the \em determinant line bundle \rm $\mathcal{L}$  \cite{drezet_narasimhan}. For every $E \in \SU$, let us define the \emph{theta divisor}
  
$$\theta(E) := \{ L \in \Pic^{g-1}(C) \ | \ h^0(C, E \otimes L) \not = 0 \}. $$

  In the rank 2 case, $\theta(E)$ is a divisor in the linear system $|2 \Theta| \cong \P^{2^g - 1}$ and $|2\Theta|$ is isomorphic to the linear system $|\L|^*$ \cite{beauville_narasimhan_ramanan}. It is well known that we can identify the map $\SU \to |\L|^*$ with the Theta map

\begin{eqnarray}
\theta : \SU & \to & |2 \Theta|;\\
E & \mapsto & \theta(E).
\end{eqnarray}  


In rank 2, the map $\theta$ is a finite morphism. If $C$ is not hyperelliptic, $\theta$ is known to be an embedding \cite{brivio_verra, vangeemen_izadi}. This is also the case in genus 2, where $\theta$ is an isomorphism onto $\P^3$ \cite{narasimhan_ramanan_moduli}. If $C$ is hyperelliptic of genus $g \geq 3$, we have that $\theta$ factors through the involution $$E \mapsto i^*E$$ induced by the hyperelliptic involution $i$, embedding the quotient $\SU/i^*$ into $|2 \Theta|$ \cite{desale_ramanan, beauville_rang2}. An interesting explicit description of the image of the hyperelliptic theta map is given in \cite{desale_ramanan}.

\subsection{The classifying maps} \label{sec:classifying_maps}

Let $D$ be a general degree $g$ effective divisor on $C$. Let us consider isomorphism classes of extensions

\begin{align*}
(e) \quad  0 \to \O(-D) \to E_e \to \O(D) \to 0.
\end{align*}

These extensions are classified by the $(3g - 2)$-dimensional projective space

\begin{align*}
  \PD := \P \Ext^1(\O(D), \O(-D)) = |K + 2D|^*,
\end{align*}
where $K$ is the canonical divisor of $C$.
  The divisor $K + 2D$ is very ample and embeds $C$ as a degree $4g - 2$ curve in $\PD$. 
  Let us define the rational surjective classifying map
  
\begin{align*}
  f_D : \PD  &\tto \SU 
\end{align*}

which sends the extension class $(e)$ to the vector bundle $E_e$. The composed map 

\begin{align*}
  \p : = \theta \circ f_D : \PD \tto | 2 \Theta |
\end{align*}

can be described in terms of polynomial maps. From \cite[Thm. 2]{bertram} we have an isomorphism

\begin{align} 
  H^0(\SU, \mathcal{L}) \cong H^0(\PD, \I_C^{g-1}(g)),
\end{align}

where $\I_C$ is the ideal sheaf of $C$. In particular we have

\begin{theorem}\label{th:bertram}
The map $\p$ is given by the linear system $|\I_C^{g-1}(g)|$ of forms of degree $g$ vanishing with multiplicity at least $g - 1$ on $C$. 
\end{theorem}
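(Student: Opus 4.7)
The plan is to derive the statement as a direct corollary of the Bertram identification $H^0(\SU, \L) \cong H^0(\PD, \I_C^{g-1}(g))$ cited just above the theorem. First I would unpack what the rational map $\p$ is defined by. Since $\theta$ is by definition $\varphi_{\L}$, the composite $\p = \theta \circ f_D$ is the rational map to $\P H^0(\SU, \L)^*$ associated to the pull-back sub-linear system $f_D^* H^0(\SU, \L) \subset H^0(\PD, f_D^*\L)$. Concretely, a basis $s_0, \ldots, s_N$ of $H^0(\SU, \L)$ gives $\p = [f_D^* s_0 : \cdots : f_D^* s_N]$ on the open locus where $f_D$ is defined, so identifying $\p$ amounts to identifying these pulled-back sections inside the space of forms on $\PD$.

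Next I would invoke Bertram's theorem. Its content is precisely that $f_D^*$ induces an isomorphism from $H^0(\SU, \L)$ onto the subspace of $H^0(\PD, \mathcal{O}_{\PD}(g))$ cut out by the condition of vanishing to order at least $g-1$ along the embedded curve $C \subset \PD$. In particular (on the open locus where it is defined) $f_D^*\L \cong \mathcal{O}_{\PD}(g)$, and the pulled-back sections are exactly the elements of $H^0(\PD, \I_C^{g-1}(g))$. Injectivity of $f_D^*$ is automatic since $f_D$ is dominant, so no nonzero section of $\L$ can pull back to zero.

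Combining these two observations, $\p$ is defined by the complete linear system $|\I_C^{g-1}(g)|$, which is the claim. The only nontrivial ingredient is the multiplicity computation along $C$: a point $x \in C$ corresponds to an extension class whose sub-line bundle $\O(-D)$ fails to be saturated (one can twist the inclusion through $\O(-D+x)$), so $C$ lies in the indeterminacy locus of $f_D$, and the order to which a pulled-back section must vanish there is governed by the failure of semistability this introduces. This is the core of \cite{bertram}; the main obstacle for a self-contained proof would be reproducing this Hecke/secant analysis to see that the vanishing order along $C$ is exactly $g-1$ and that one recovers the full space $H^0(\SU, \L)$ this way. Here one simply cites the result, and the theorem follows immediately.
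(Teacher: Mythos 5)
Your proposal is correct and follows essentially the same route as the paper, which likewise presents the theorem as an immediate consequence of the cited isomorphism $H^0(\SU,\L)\cong H^0(\PD,\I_C^{g-1}(g))$ from Bertram's work, with the map $\p$ realized by pulling back sections of $\L$ along $f_D$. Your additional remarks on injectivity of $f_D^*$ and on why $C$ sits in the base locus are consistent elaborations of what the paper leaves implicit.
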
 

Let us denote by $\Sec^n(C)$ the variety of $(n + 1)$-secant $n$-planes on $C$.
We have that the singular locus of $\Sec^{n+1}(C)$ is the secant variety $\Sec^{n}(C)$ for every $n$.
The linear system $|\I_C^{g-1}(g)|$ is characterized as follows:

\begin{proposition}[\protect{\cite[Lemma 2.5]{alzati_bolognesi}}] \label{prop:linear_systems_coincide}
  The linear system $|\I_C^{g-1}(g)|$ and $|\I_{\Sec^{g-2} (C)}(g)|$ on $\PD$ are the same.
\end{proposition}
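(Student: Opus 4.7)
The plan is to establish the equality of the two linear systems by proving the two inclusions separately.

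For the inclusion $|\I_{\Sec^{g-2}(C)}(g)| \subseteq |\I_C^{g-1}(g)|$, I would invoke the classical fact that the $k$-th secant variety $\Sec^k(C)$ of a smooth non-degenerate curve is singular along $C$ with generic multiplicity exactly $k+1$ (one verifies this by a local computation of the tangent cone of $\Sec^k(C)$ at a general point of $C$, using that the $(k+1)$-secant $k$-planes through a point of $C$ vary in a $k$-dimensional family). Hence $\Sec^{g-2}(C)$ has multiplicity $g-1$ along $C$, so $\I_{\Sec^{g-2}(C)}\subseteq \I_C^{g-1}$ in a neighborhood of a generic point of $C$; this gives the claimed inclusion on global sections in degree $g$.

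For the reverse inclusion, let $F$ be a degree $g$ form vanishing to order $\geq g-1$ on $C$; I need to show $F$ vanishes on $\Sec^{g-2}(C)$. A general point of $\Sec^{g-2}(C)$ lies on a unique $(g-1)$-secant $(g-2)$-plane $\Pi = \langle p_1,\ldots,p_{g-1}\rangle$ with the $p_i$'s general on $C$, so it suffices to show $F|_\Pi \equiv 0$. The restriction is a degree $g$ form on $\P^{g-2}$ vanishing to order $\geq g-1$ at the $g-1$ general points $p_i$, and the statement reduces to the elementary claim that for $g\geq 3$, no nonzero form of degree $g$ on $\P^{g-2}$ vanishes to order $\geq g-1$ at $g-1$ points in general position.

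To prove this elementary claim I would argue by successive restriction to coordinate flats spanned by subsets of the $p_i$. On each chord $L_{ij}=\overline{p_ip_j}$ the restriction is a univariate polynomial of degree $g$ vanishing to total order $\geq 2(g-1)>g$, hence zero. Iterating: if $F|_\Pi$ vanishes on every $k$-flat spanned by $k+1$ of the points, then on any $(k+1)$-flat $V$ spanned by $k+2$ of them, $F|_V$ is divisible by the product of the $k+2$ linear forms defining the coordinate hyperplanes of $V$; the leftover factor $G$ has degree $g-k-2$ and, since the product vanishes to order $k+1$ at each $p_{i_j}$, $G$ vanishes to order $\geq g-k-2 = \deg G$ at $k+2$ general points of $V\cong \P^{k+1}$. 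An auxiliary sub-lemma --- \emph{a form of degree $d\geq 1$ on $\P^n$ vanishing to order $\geq d$ at $n+1$ general points is zero}, proved by an easy induction on $n$ via hyperplane restriction --- then forces $G=0$, completing the induction. At the top step $F|_\Pi$ vanishes on each of the $g-1$ coordinate hyperplanes of $\Pi$, whence $F|_\Pi = (x_0\cdots x_{g-2})\cdot H$ with $H$ linear; the product vanishes to order $g-2$ at each $p_i$, so $H$ must vanish at each $p_i$, and since the $p_i$ span $\Pi$ this forces $H=0$.

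The main technical obstacle is the bookkeeping in the iterated restriction: at each step one must track precisely the order to which the product of coordinate linear forms vanishes at the base points in order to conclude that the leftover factor still satisfies the hypotheses of the sub-lemma. The other delicate ingredient is the classical multiplicity statement for secant varieties of curves used in the easy direction; any argument sidestepping it would need a direct cohomological computation of $\I_{\Sec^{g-2}(C)}$ in degree $g$, which is substantially harder.
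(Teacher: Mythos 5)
Your second inclusion, $|\I_C^{g-1}(g)| \subseteq |\I_{\Sec^{g-2}(C)}(g)|$, is correct and complete: restricting to a general $(g-1)$-secant $(g-2)$-plane and peeling off coordinate flats by induction does work, and the bookkeeping you flag as the main obstacle checks out (the product of the $k+2$ coordinate linear forms of a $(k+1)$-flat vanishes to order exactly $k+1$ at each of its $k+2$ distinguished points, since each point lies on all but one of those hyperplanes, so the residual factor of degree $g-k-2$ satisfies the hypotheses of your sub-lemma). This half is a genuinely different route from the paper, which handles both inclusions at once by polarizing a degree-$g$ form into a symmetric $g$-linear form on $H^0(C,K+2D)^*$ and matching the two vanishing conditions; your version is longer but more self-contained than the paper's one-line sketch.

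The first inclusion, however, has a real gap. From ``$\Sec^{g-2}(C)$ has multiplicity $g-1$ at a general point $p$ of $C$'' you infer ``$\I_{\Sec^{g-2}(C)} \subseteq \I_C^{g-1}$ near $p$''. That implication is false in general: the multiplicity of a variety $X$ at $p$ is the degree of its tangent cone and does not bound from below the order of vanishing at $p$ of elements of the ideal of $X$. For example, the three coordinate axes in $\P^3$ have multiplicity $3$ at their common point, yet the quadric $x_1x_2=0$ contains all three and vanishes there only to order $2$. So this direction needs an actual argument, not the multiplicity computation. A correct one in the spirit of the paper: let $F$ have degree $g$ and vanish on $\Sec^{g-2}(C)$, let $\tilde F$ be its polarization, and fix $p \in C$. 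For any $q_1,\dots,q_{g-2} \in C$, all of $p,p,q_1,\dots,q_{g-2}$ lie in the secant plane $\langle p,q_1,\dots,q_{g-2}\rangle \subset \Sec^{g-2}(C)$, on which $F$ vanishes identically, hence $\tilde F(p,p,q_1,\dots,q_{g-2})=0$; since $C$ spans $\PD$, multilinearity gives $\tilde F(p,p,v_1,\dots,v_{g-2})=0$ for all $v_i$, i.e.\ every derivative of $F$ of order at most $g-2$ vanishes at $p$, which is exactly $F \in |\I_C^{g-1}(g)|$. With that repair your proof is complete.
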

\begin{proof}
  We reproduce here the proof for the reader's convenience.
  The elements of both linear systems can be seen as symmetric $g$-linear forms on the vector space $H^0(C, K + 2D)^*$. 
  Let $F$, $G$ be such forms. 
  Then, $F$ belongs to $|\I_C^{g-1}(g)|$ (resp. $G$ belongs to $|\I_{\Sec^{g-2} (C)}(g)|$) if and only if
\begin{align*}
  F(p_1, \ldots, p_g) &= 0 \quad  \text{ for all } p_k \in C \text{ such that } p_i = p_j \text{ for some } 1 \leq i,j \leq g \\
  G(p, \ldots, p) &= 0 \quad \text{ for any linear combination } p = \sum_{k=1}^{g-1} \lambda_i p_i \text{, where } p_i \in C .
\end{align*}
  One can show that these conditions are equivalent by exhibiting appropriate choices of $\lambda_i$. 
\end{proof}

\subsection{The exceptional fibers of the classifying map $f_D$}
  Since $\dim \SU = 3g - 3$, the generic fiber of $f_D$ has dimension one. The set of stable bundles for which $\dim(f_D^{-1}(E)) > 1$ is a proper subset of $\SU$. For simplicity, let us define the "Serre dual" divisor
  
\begin{align*}
  B := K - D
\end{align*} 

  with $\deg(B) = g-2$. As in the previous paragraphs, the isomorphism classes of extensions

\begin{align*}
  \quad  0 \to \O(-B) \to E \to \O(B) \to 0
\end{align*}

 are classified by the projective space 
 
 $$\PB := \P \Ext^1 (\O(B), \O(-B)) = |K + 2B|^*,$$ 
 
which is endowed with the rational classifying map $f_B :  \PB  \tto \SU$ defined in the same way as $f_D$.
   
\begin{proposition} \label{prop:exceptional_fibers}
  Let $E \in \SU$ be a stable bundle. Then 
\begin{align*}
  \dim(f_D^{-1}(E)) \geq 2 \qquad  \text{if and only if} \qquad   E \in \overline{f_B(\PB)}.
\end{align*}
\end{proposition}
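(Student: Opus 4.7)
My plan is to compute $\dim f_D^{-1}(E)$ cohomologically, apply Serre duality to translate the result into a condition on $h^0(E(B))$, and recognize this condition as defining $\overline{f_B(\PB)}$.

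First, for stable $E$, any nonzero morphism $\mathcal{O}(-D) \to E$ is injective, since its kernel would be a torsion subsheaf of the torsion-free sheaf $\mathcal{O}(-D)$. Two such injections with locally free cokernel $\mathcal{O}(D)$ produce the same extension class precisely when they differ by a scalar automorphism of $\mathcal{O}(-D)$. Hence $f_D^{-1}(E)$ is birational to an open subset of $\mathbb{P} H^0(E(D)) = \mathbb{P} \Hom(\mathcal{O}(-D), E)$, and
\begin{equation*}
\dim f_D^{-1}(E) = h^0(E(D)) - 1.
\end{equation*}

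Next, $\det E = \mathcal{O}$ together with $\operatorname{rk} E = 2$ forces $E^{\vee} \cong E$, so Serre duality gives $h^1(E(D)) = h^0(K \otimes E(-D)) = h^0(E(B))$. Combined with Riemann--Roch $\chi(E(D)) = 2 \deg D + 2(1-g) = 2$, this yields $h^0(E(D)) = 2 + h^0(E(B))$, hence
\begin{equation*}
\dim f_D^{-1}(E) \geq 2 \ \Longleftrightarrow \ h^0(E(B)) \geq 1.
\end{equation*}

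It remains to identify the closed locus $W := \{E \in \SU \text{ stable} \ | \ h^0(E(B)) \geq 1 \}$ with $\overline{f_B(\PB)}$. One direction is immediate: for $E \in f_B(\PB)$ there is an injection $\mathcal{O}(-B) \hookrightarrow E$, so $h^0(E(B)) \geq 1$, and this condition passes to the closure by semi-continuity. Conversely, a nonzero section of $E(B)$ provides an injection $\mathcal{O}(-B) \hookrightarrow E$; if its cokernel is the locally free sheaf $\mathcal{O}(B)$ (i.e.\ the injection is saturated) then $E \in f_B(\PB)$ directly. The main obstacle is the non-saturated case, which I would handle with a deformation argument: the dimension count $\dim \PB = 3g - 6$ together with generic fiber dimension zero for $f_B$ (since $h^0(E(B)) = 1$ for generic $E$ in the image) show $\overline{f_B(\PB)}$ is irreducible of dimension $3g-6$, matching the expected dimension of $W$. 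A family of extensions can then be constructed degenerating a generic saturated inclusion to the given non-saturated one, placing $E$ in $\overline{f_B(\PB)}$.
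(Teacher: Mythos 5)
Your proof is correct and follows essentially the same route as the paper's: Riemann--Roch plus Serre duality (using $E^{\vee} \cong E$) to get $\dim f_D^{-1}(E) = h^0(E(B)) + 1$, and then identifying the locus $\{h^0(E(B)) \geq 1\}$ with $\overline{f_B(\PB)}$. The only difference is that you spell out the saturation/closure issue in that last identification, which the paper simply asserts in one line; your sketch there is reasonable and, if anything, more careful than the published argument.
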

\begin{proof}
  Let $E$ be a stable bundle. Then, by Riemann-Roch and Serre duality theorems, the dimension of $f^{-1}_D(E)$ is given by
\begin{align*}
   h^0(C, E \otimes \O(D)) &= h^0(C, E \otimes \O(B)) + 2g - 2(g-1) 
\\ &= h^0(C, E \otimes \O(B)) + 1
\end{align*}
Thus, $\dim(f_D^{-1}(E)) > 2$ if and only if there exists a non-zero sheaf morphism $\O(-B) \to E$. This is equivalent to $E \in \overline{f_B(\PB)}$.
\end{proof}

  If $g > 2$, the divisor $|K + 2B|$ embeds $C$ as a degree $4g - 6$ curve in $\PB$ (recall that $\P \Ext^1 (\O(B),\O(-B)) = |K + 2 B|^*$). 
  Again by Theorem \ref{th:bertram}, the map $\varphi_B$ is given by the linear system $|\I_C^{g-3}(g-2)|$.
  Moreover, by \cite[Theorem~4.1]{pareschi_popa} this linear system has projective dimension $\left( \sum_{i = 0}^{g-2} {\binom{g}{i}} \right) - 1$ .

  Let us denote by $\P_c$ the linear span of $\theta(\overline{f_B(\PB)})$ in $|2 \Theta|$.  
  Since the map $\theta$ is finite, $\P_c$ has projective dimension $\left[ \sum_{i = 0}^{g-2} {\binom{g}{i}} \right] - 1$, and Proposition \ref{prop:exceptional_fibers} also applies to $\p$: the fibers of $\p$ with dimension $\geq 2$ are those over $\P_c$.

{}\section{A linear projection in $|2 \Theta|$ }\label{sec3}

The goal of this Section is to describe the map $\SU \to \P^g$ whose fibers will be birational - and in some cases even biregular - to the GIT compactification of the moduli space of $2g$-pointed rational curves. In order to do this, we describe the projection with center $\P_c$, seen as a linear subspace of $|2 \Theta|$. The fibers of this map are what we are looking for, and so far we are not resricting to the case where $C$ is hyperelliptic.

\medskip

Let $\pproj$ be the linear projection in $|2 \Theta|$ with center $\P_c$. Recall that $\dim \P_c = \left[ \sum_{i = 0}^{g-2} \binom{g}{i} \right] - 1$. A straightforward calculation shows that the supplementary linear subspaces of $\P_c$ in $|2 \Theta|$ are of projective dimension $g$. Thus, the image of $\pproj$ is a $g$-dimensional projective space.
Let us set

$$\genSU := \SU \setminus (\Kum(C) \cup \overline{\pPB}).$$ 

This is the open subset of $\SU$ we will be mostly concerned by.
Recall that the space $H^0(C, E \otimes \O(D))$ has dimension 2 for $E \in \genSU$. Consequently, we can pick two sections $s_1$ and $s_2$ that constitute a basis for this space.

\begin{theorem} \label{th:proyeccion}
  The image of the projection $\pproj$ can be identified with the linear system $|2 D|$ on $C$, in a way such that the restriction of the projection $\pproj$  to $\theta(\genSU)$ coincides with the map

\begin{align*}
  \theta(\genSU) \to |2D|  \\
  \theta(E) \mapsto \Zeroes(s_1 \wedge s_2)
\end{align*}
\end{theorem}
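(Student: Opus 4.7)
The plan is to verify the equality of the two rational maps $\SU \dashrightarrow \P^g$ (the linear projection $\pproj$ restricted to $\theta(\genSU)$, and the wedge map $\phi:E \mapsto \Zeroes(s_1 \wedge s_2)$) by pulling both back along the classifying map $f_D: \PD \dashrightarrow \SU$ and comparing the resulting linear systems on $\PD$ via Theorem~\ref{th:bertram}. First, $\phi$ is well-defined on $\genSU$: the Riemann--Roch computation from the proof of Proposition~\ref{prop:exceptional_fibers}, combined with $E \notin \overline{f_B(\PB)}$, gives $h^0(E \otimes \OC(D)) = 2$, so $s_1 \wedge s_2$ is determined up to scalar as an element of $H^0(\det(E \otimes \OC(D))) = H^0(\OC(2D))$, whose divisor is a well-defined point of $|2D| \cong \P^g$.

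For the pull-back $\psi = \phi \circ f_D$, I would use that $[e] \in \PD$ corresponds to an extension $0 \to \OC(-D) \to E_e \to \OC(D) \to 0$. Tensoring with $\OC(D)$ produces $0 \to \OC \to E_e \otimes \OC(D) \to \OC(2D) \to 0$, whose long exact sequence carries a connecting homomorphism $\delta_e : H^0(\OC(2D)) \to H^1(\OC)$ given by cup product with $e \in H^1(\OC(-2D))$, hence linear in the homogeneous coordinates of $\PD$. For generic $[e]$, the canonical nowhere-vanishing section $s_1 \in H^0(\OC) \subset H^0(E_e \otimes \OC(D))$ is accompanied by a section $s_2$ whose image in $H^0(\OC(2D))$ generates $\ker(\delta_e)$, so $\Zeroes(s_1 \wedge s_2)$ is the divisor of that generator and $\psi([e]) = \ker(\delta_e)$. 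By the adjugate construction, $\psi$ is defined by a $(g+1)$-dimensional linear subsystem $\Sigma$ of forms of degree $g$ on $\PD$, namely the span of the $g \times g$ minors of $\delta_e$.

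On the other side, the pull-back $\pproj \circ \theta \circ f_D$ is governed by the Bertram isomorphism $H^0(\SU, \mL) \cong H^0(\PD, \I_C^{g-1}(g))$: it corresponds to the subsystem of $|\I_C^{g-1}(g)|$ dual to the hyperplanes of $|2\Theta|$ containing $\P_c$, i.e., forms vanishing on the exceptional preimage $\overline{f_D^{-1}(f_B(\PB))} \subset \PD$. By Proposition~\ref{prop:exceptional_fibers} together with the classical fact that the fibers of $f_D$ over $\overline{f_B(\PB)}$ are spanned by $g$ points of $C$, this preimage is exactly $\Sec^{g-1}(C)$, so the relevant subsystem is $|\I_C^{g-1}(g)| \cap |\I_{\Sec^{g-1}(C)}(g)|$. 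A dimension count shows both this subsystem and $\Sigma$ have dimension $g+1$, and the inclusion $\Sigma \subseteq |\I_{\Sec^{g-1}(C)}(g)|$ holds because on any $g$-secant $(g-1)$-plane the map $\delta_e$ degenerates to rank $\leq g-1$, killing every $g \times g$ minor; while $\Sigma \subseteq |\I_C^{g-1}(g)|$ is forced by Bertram since $\psi$ factors through $\SU$ by construction. The identification $\P^g \cong |2D|$ of the image of $\pproj$ is then the canonical one induced by the Serre-duality pairing $H^1(\OC(-2D)) \otimes H^0(\OC(2D)) \to H^1(\OC)$.

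The main obstacle lies in the geometric identification of the exceptional preimage $\overline{f_D^{-1}(f_B(\PB))}$ with $\Sec^{g-1}(C)$, which rests on the extension-theoretic interpretation of secant planes as loci of extensions admitting an extra subsheaf $\OC(-B) \hookrightarrow E$, together with a dimension and irreducibility argument. Once this is settled and both linear subsystems are pinned down to be $(g+1)$-dimensional, the equality $\pproj \circ \theta = \phi$ on $\theta(\genSU)$ follows by descent through $f_D$.
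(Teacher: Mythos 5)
Your overall strategy---pulling both maps back to $\PD$ along $f_D$ and comparing linear systems of degree-$g$ forms, with $\phi\circ f_D$ described by the $g\times g$ minors of the connecting map $\delta_e$---is genuinely different from the paper's proof, which works entirely on the $|2\Theta|$ side: there one considers the curve $\widetilde{C}=\{\O(B+p)\}_{p\in C}\subset\Pic^{g-1}(C)=|2\Theta|^*$, shows that its linear span is the annihilator of $\P_c$, so that projection off $\P_c$ becomes $\theta(E)\mapsto\theta(E)\cap\widetilde{C}$, and concludes with a Riemann--Roch/Serre duality computation identifying that intersection divisor with $\Zeroes(s_1\wedge s_2)$. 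Your description of $\psi=\phi\circ f_D$ via $\ker(\delta_e)$ and the adjugate is correct and could serve as the basis of an alternative proof.

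However, the step you yourself single out as the main obstacle is false as stated: $\overline{f_D^{-1}(f_B(\PB))}$ is not $\Sec^{g-1}(C)$. By the Lange--Narasimhan dictionary, $e\in\langle F\rangle$ for $F$ effective of degree $d$ exactly when $\O(D-F)$ lifts to a subsheaf of $E_e$; for $d=g$ this gives degree-$0$ line subbundles, so $\Sec^{g-1}(C)\setminus\Sec^{g-2}(C)$ lies over the strictly semistable (Kummer) locus, not over $f_B(\PB)$. The condition $\O(-B)=\O(D-K)\hookrightarrow E_e$ corresponds instead to $F\in|K|$, so the correct exceptional preimage is $\bigcup_{F\in|K|}\langle F\rangle$, of dimension $3g-4$, whereas $\dim\Sec^{g-1}(C)=2g-1$; the dimensions already disagree for every $g\geq 4$. (Relatedly, the ``classical fact'' you invoke is not one: the fibers of $f_D$ over general points of $f_B(\PB)$ are $\P^2$'s sitting inside spans of canonical, i.e.\ degree $2g-2$, divisors.) This breaks the key vanishing claim for $\Sigma$: at a general point of $\langle F\rangle$ with $\deg F=g$ one only gets $h^0(E_e\otimes\O(D))\geq 2$, so $\ker(\delta_e)$ is still one-dimensional, $\delta_e$ has full rank $g$, and the $g\times g$ minors do not all vanish there. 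The minors vanish simultaneously precisely where $h^0(E_e\otimes\O(D))\geq 3$, which by Proposition \ref{prop:exceptional_fibers} is $f_D^{-1}(\overline{f_B(\PB)})$ together with loci over non-stable bundles; replacing $\Sec^{g-1}(C)$ by this locus throughout would let your comparison of $(g+1)$-dimensional subsystems go through, but the argument as written does not. A smaller gap: the assertion that $\Sigma\subseteq|\I_C^{g-1}(g)|$ ``is forced by Bertram since $\psi$ factors through $\SU$'' still requires knowing that the induced map on $\SU$ is given by sections of $\L$ itself rather than of a higher power.
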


\begin{proof}
  This result was proved in \cite{alzati_bolognesi} for $C$ non hyperelliptic, but the proof extends harmlessly to the hyperelliptic case. We will mention explicitly where the proof for $C$ hyperelliptic differs.
  The Picard variety $\Pic^{g-1}(C)$ contains a model $\widetilde{C}$ of $C$, made up by line bundles of type $\O(B + p)$, with $p \in C$. The span of $\widetilde{C}$ inside $|2 \Theta|^*$ corresponds to the complete linear system $|2 D|^*$. 
  Moreover, the linear span of $\widetilde{C}$ is the annihilator of $\P_c$. In particular, the projection $p_{\P_c|\theta(\genSU)}$ determines a hyperplane in the annihilator of $\P_c$, which is a point in $|2 D|$. This projection can be identified with the map

\begin{align*}
  \pproj|_{\theta(\genSU)} : \theta(\genSU) & \to |2D|,  \\
  \theta(E) & \mapsto \Delta(E) ,
\end{align*}

where $\Delta(E)$ is the divisor defined by

\begin{align}
  \Delta (E):= \{p \in C \ | \ h^0(C,E \otimes \O(B + p)) \not = 0 \}.
\end{align}

Equivalently, we have that $\Delta(E) = \theta(E) \cap \widetilde{C}$. Now, in order to adapt to the hyperelliptic case, it is enough to observe that since $\theta(E) = \theta(i^*E)$, we directly obtain that $\Delta(E) = \Delta(i^* E)$.
Finally, an easy Riemann-Roch argument shows that that $\Delta(E)$ is the divisor of zeroes of $s_1 \wedge s_2$.
\end{proof}

  Recall that the linear system $|K + 2D|$ embeds the curve $C$ in the projective space $\PD$. Let $N \in |2D|$ be a generic effective reduced divisor and consider the linear span $\langle N \rangle \subset \PD$. The annihilator of $\langle N \rangle$ is the vector space $H^0(C, 2D + K - N)$, which has dimension $g$. In particular, the linear span $\langle N \rangle$ has dimension $(3g - 2) - g = 2g - 2$.
  Let us write $$\PN := \langle N \rangle \subset \PD.$$ 
  We will study the classifying map $\p$ in relation with a fibration $\SU \to \P^g$ by considering the  restrictions of $\p$ to $\PN$, as $N$ varies in the linear system $|2D|$. The spaces $\PN$ have a very explicit description in terms of extension classes (see \cite{lange_narasimhan}).

\begin{notation}
  For simplicity, let us write $\pPN$ for the restricted map $\p|_{\PN}$.
\end{notation}

\begin{proposition} \label{prop:fibre=image}
  Let $N$ in $|2D|$ be a general divisor on $C \subset \PD$. Then, the image of
\begin{align*}
  \pPN : \PN \tto \theta(\SU)
\end{align*}
is the closure in $\theta(\SU)$ of the fiber over $N \in |2D|$ of the projection $\pproj$.
\end{proposition}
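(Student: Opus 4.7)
The plan is to first prove the set-theoretic inclusion $\pPN(\PN) \subseteq \overline{\pproj^{-1}(N)}$ by computing $\pproj \circ \pPN$ on $\PN$ explicitly using Theorem \ref{th:proyeccion}, and then upgrade this to equality via a dimension count combined with irreducibility.

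For the containment, fix a generic $e \in \PN$. Then $E_e \in \genSU$, and the inclusion $\O(-D) \hookrightarrow E_e$ produced by the extension gives a nowhere-vanishing section $s_1 \in H^0(E_e \otimes \O(D))$, which I complete to a basis $\{s_1, s_2\}$. Twisting the extension by $\O(D)$ produces the short exact sequence $0 \to \O \to E_e \otimes \O(D) \to \O(2D) \to 0$ whose long exact sequence
\begin{align*}
0 \to H^0(\O) \xrightarrow{\cdot s_1} H^0(E_e \otimes \O(D)) \to H^0(\O(2D)) \xrightarrow{\partial} H^1(\O)
\end{align*}
identifies $\Delta(E_e) = Z(s_1 \wedge s_2)$ with the zero divisor of a generator $\bar s_2$ of $\ker\partial$ (which is one-dimensional because $h^0(E_e \otimes \O(D)) = 2$). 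The connecting map $\partial$ is cup-product with the extension class $e \in H^1(\O(-2D))$; via the Serre-duality isomorphisms $H^1(\O(-2D)) \cong H^0(K + 2D)^*$ and $H^1(\O) \cong H^0(K)^*$, it corresponds to composing the multiplication map $H^0(\O(2D)) \otimes H^0(K) \to H^0(K + 2D)$ with the functional $e$. Hence $s \in \ker\partial$ if and only if $e$ vanishes on the subspace $s \cdot H^0(K) \subseteq H^0(K+2D)$. Now, since $N \sim 2D$, the annihilator of $\PN$ inside $H^0(K + 2D)$ is $H^0(K + 2D - N) = H^0(K)$, embedded as $t_N \cdot H^0(K)$, where $t_N$ is the unique (up to scalar) section of $\O(2D)$ vanishing along $N$. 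Thus $e \in \PN$ forces $t_N \in \ker\partial$, so $\bar s_2 \propto t_N$, and Theorem \ref{th:proyeccion} yields $\pproj(\pPN(e)) = Z(t_N) = N$.

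To pass from inclusion to equality, I compare dimensions. The irreducible $\PN$ has dimension $2g - 2$; since the generic fibers of $f_D$ are one-dimensional and $\theta$ is finite, the generic fibers of $\pPN$ over its image have dimension at most $1$, so $\dim \pPN(\PN) \geq 2g - 3$. On the other hand, for generic $N$ the fiber $\pproj^{-1}(N)$ is irreducible of dimension $\dim \theta(\SU) - g = 2g - 3$. An inclusion of irreducible varieties of the same dimension then forces equality of closures.

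The main obstacle is the Serre-duality computation that translates "$e \in \PN$" into "$t_N \in \ker\partial$": identifying the connecting homomorphism with cup-product against $e$ and the annihilator of $\PN$ with $t_N \cdot H^0(K)$ is the technical heart of the argument. Once this is in place, the divisor-theoretic conclusion $\Delta(E_e) = N$ falls out immediately and the remainder is a standard dimension comparison.
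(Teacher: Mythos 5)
Your first half is correct and is essentially a self-contained expansion of what the paper does: identifying the connecting homomorphism with cup-product against $e$ and the annihilator of $\PN$ with $t_N\cdot H^0(K)$ is precisely the content of the Lange--Narasimhan criterion that the paper invokes (\cite[Prop.~1.1]{lange_narasimhan}: $e\in\PN$ if and only if $E_e\otimes\O(D)$ admits a second section $\alpha$ with $\Zeroes(\alpha\wedge i_e)=N$), and from there Theorem \ref{th:proyeccion} gives the inclusion $\pPN(\PN)\subseteq\overline{\pproj^{-1}(N)}$ exactly as you argue.

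The gap is in the reverse inclusion. You assert that for generic $N$ the fiber $\pproj^{-1}(N)$ is \emph{irreducible}. Equidimensionality of dimension $2g-3$ is automatic for the generic fiber of a dominant map of irreducible varieties, but irreducibility is not: the Stein factorization of $\pproj|_{\theta(\SU)}$ could a priori have a nontrivial finite part (the fibers are intersections of $\theta(\SU)$ with linear spaces all containing the fixed center $\P_c$, so no Bertini-type statement applies off the shelf), and nothing you have established rules this out --- indeed, connectedness of these fibers is essentially equivalent to the statement being proved. Without it, your dimension count only shows that $\overline{\pPN(\PN)}$ is \emph{one} irreducible component of $\overline{\pproj^{-1}(N)}$. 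The paper instead proves the reverse inclusion directly: for $E\in\genSU$ with $\Delta(E)=N$, \emph{every} extension class $e\in f_D^{-1}(E)$ satisfies $\Delta(E_e)=N$ and hence lies in $\PN$, so $\theta(E)\in\pPN(\PN)$ and $\pproj^{-1}(N)\cap\theta(\genSU)$ is contained in the image of $\pPN$. Note that your own Serre-duality computation is an equivalence ($e\in\PN$ iff $t_N\in\ker\partial$ iff $\Delta(E_e)=N$, for $E_e\in\genSU$), so running it in the reverse direction yields exactly this argument; replacing the dimension count by that observation repairs the proof.
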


\begin{proof}
  Let $(e) \in \PD$ be an extension 
\begin{align*}
(e) \quad  0 \rightarrow \O(-D) \xrightarrow{i_e} E_e \xrightarrow{\pi_e} \O(D) \to 0.
\end{align*}
  By \cite[Proposition 1.1]{lange_narasimhan}, we have that $e \in \PN$ if and only if there exists a section 
$$\alpha \in H^0(C, \Hom(\O(-D),E))$$
such that $\Zeroes(\pi_e \circ \alpha) = N$. 
  This means that $\alpha$ and $i_e$ are two independent sections of $E_e \otimes \O(D)$ with $\Zeroes(\alpha \wedge i_e) = N$. Consequently, $\theta(E_e) = \pPN (e)$ is projected by $\pproj$ on $N \in |2D|$ by Theorem \ref{th:proyeccion}. Hence, the image of $\pPN$ is contained in $\overline{\pproj^{-1}(N)}$.

  Conversely, by the proof of Theorem \ref{th:proyeccion}, we have that for every bundle $E \in \genSU$, $\theta(E)$ is projected by $\pproj$ to a divisor $\Delta(E) \in |2D|$.
  The argument used above implies that the fiber $\p^{-1}(\theta(E)) = f_D^{-1}(E)$ of such bundle is contained in $\P^{2g - 2}_{\Delta(E)}$. Consequently, the fiber of a general divisor $N \in |2D|$ by $\pproj$ is contained in the image of $\pPN$.
\end{proof}
  
{}\section{The modular fibration $\SU \to \P^g$} \label{sec:restriction_map}
  
Let $C$ be a smooth genus $g \geq 3$ curve (not necessarily hyperelliptic). Let $D$ be a general degree $g$ effective divisor on $C$. Let $N = p_1 + \cdots + p_{2g}$ be a general divisor in the linear system $|2D|$. Consider the span $\PN$ in $\PD$ of the $2g$ marked points $p_1, \ldots,  p_{2g}$. In this Section, building on \cite{bolognesi_BLMS} and \cite{alzati_bolognesi}, we will give more information about restricted map $$\pPN = \p|_{\PN}: \PN \tto \SU.$$ In particular, we will explain the interplay between these maps, rational normal curves  in $\PN$ and moduli spaces of rational pointed curves.

\subsection{Linear systems in $\PN$ contracting rational normal curves.}  
  
 
 Recall that the secant variety $\Sec^{g - 2}(C)$ is the base locus for $\p$ (see Theorem \ref{th:bertram} and Proposition \ref{prop:linear_systems_coincide}). Hence we will distinguish the following two secant varieties in $\PN$:
 
\begin{align*}
  \Sec^N &:= \Sec^{g-2}(C) \cap \PN \ , \\
  \Sec^{g-2}(N) &:= \bigcup_{\substack{M \subset N \\ \#M = g - 1}} \operatorname{span} \{M \} \ .
\end{align*}

  Note that, since the points of $N$ are already in $\PN$, we have the inclusion $\Sec^{g-2}(N) \subset \Sec^N$. 
We will also need to consider the linear systems on $\PN$

\begin{align*}
  \I_{\Sec^N}(g) \ , \quad \mathrm{and}\ \quad
  \I_{\Sec^{g-2}(N)}(g) 
\end{align*}

  of forms of degree $g$ vanishing on the corresponding secant varieties. The previous inclusion of secant varieties implies that $\I_{\Sec^N}(g)$ is in general a linear subsystem of $\I_{\Sec^{g-2}(N)}(g)$.

\begin{lemma} \label{lem:base_locus}
  The restricted map $\pPN$ is given by a linear subsystem $\Rlin$ of $|\I_{\Sec^N}(g)|$. 
\end{lemma}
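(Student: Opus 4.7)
The plan is to derive the lemma directly by restricting the linear system that defines $\p$ on the whole of $\PD$ to the linear subspace $\PN$. By Theorem \ref{th:bertram}, the classifying map $\p = \theta \circ f_D$ on $\PD$ is given by the linear system $|\I_C^{g-1}(g)|$ of degree $g$ forms vanishing to order at least $g-1$ along $C$. By Proposition \ref{prop:linear_systems_coincide}, this linear system coincides with $|\I_{\Sec^{g-2}(C)}(g)|$, i.e.\ the system of degree $g$ forms vanishing on the $(g-2)$-th secant variety of $C \subset \PD$.

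Next, I would restrict to $\PN \subset \PD$. Any element $F \in H^0(\PD, \I_{\Sec^{g-2}(C)}(g))$ restricts to a section $F|_{\PN} \in H^0(\PN, \OO_{\PN}(g))$ that vanishes along $\Sec^{g-2}(C) \cap \PN$. By the very definition of $\Sec^N$ in the paragraph preceding the lemma, this intersection is precisely $\Sec^N$. Hence the restricted forms lie in $H^0(\PN, \I_{\Sec^N}(g))$. Writing $\Rlin$ for the image of the restriction map
\[
H^0(\PD, \I_{\Sec^{g-2}(C)}(g)) \longrightarrow H^0(\PN, \I_{\Sec^N}(g)),
\]
we obtain that $\pPN$ is given by a linear subsystem $\Rlin \subseteq |\I_{\Sec^N}(g)|$, which is exactly the statement.

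The only thing one has to check to make the argument rigorous is that the restriction map is not identically zero, so that $\pPN$ is genuinely defined by a (nonempty) subsystem; equivalently, that $\PN$ is not entirely contained in the base locus $\Sec^{g-2}(C)$ of $\p$. This is immediate for generic $N$, since $\PN$ has dimension $2g-2$ while $\Sec^{g-2}(C)$ has dimension $2g-3$, and $\PN$ is chosen generically in $|2D|$. The main (essentially only) obstacle is this genericity remark; the rest is a direct unpacking of the previously established description of $\p$.
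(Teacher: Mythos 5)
Your proof is correct and follows exactly the paper's route: the paper's own proof is the one-line observation that the lemma is a direct consequence of Theorem \ref{th:bertram} and Proposition \ref{prop:linear_systems_coincide}, and your argument simply spells out the restriction of $|\I_{\Sec^{g-2}(C)}(g)|$ to $\PN$ together with the (correct) dimension count showing $\PN \not\subset \Sec^{g-2}(C)$.
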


\begin{proof}
  This is a direct consequence of Theorem \ref{th:bertram} and Proposition \ref{prop:linear_systems_coincide}.
\end{proof}

\subsection{Moduli spaces of pointed rational curves} \label{sec:moduli_pointed}
In this Section we will outline the relation between the restricted map $\pPN$ and the moduli spaces of pointed rational curves. 

\subsubsection{Two compactifications of $\M_{0,n}$}
  The moduli space $\M_{0,n}$ of ordered configurations of $n$ disctinct points on the projective line is not compact.
  We will consider two compactifications of $\M_{0,n}$. The first one is the GIT quotient $$\MGITn := (\P^1)^n // \PGL(2, \mathbb{C})$$ of $(\P^1)^n$ by the diagonal action of $G=\PGL(2, \mathbb{C})$ for the natural $G$-linearization of the line bundle $L = \boxtimes_{i=1}^n\O_{\P^1}(1)$ (see \cite{dolgachev_ortland}). 
The quotient $\MGITn$ is naturally embedded in the projective space $\P ( H^0 ((\P^1)^n, L )^{G} )$ of invariant sections.
  
The second one is the Mumford-Knudsen compactification $\MKnudn$ \cite{knudsen_projectivity}. The points of $\MKnudn$ represent isomorphism classes of stable curves.
  More details on these constructions can be found in \cite{kapranov_veronese} and \cite{knudsen_projectivity}.

Both $\MGIT$ and $\MKnudn$ contain $\M_{0,n}$ as an open set. However, the Mumford-Knudsen compactification is finer on the boundary: there exists a contraction dominant morphism $$c_n: \MKnudn \to \MGITn$$ contracting some components of the boundary of $\MKnudn$, that restricts to the identity on $\M_{0,n}$ \cite{bolognesi_BLMS}. Moreover, we will denote by 

$$\lambda_k: \overline{\M}_{0,n} \to \overline{\M}_{0,n-1},$$

for $k = 1,\ldots,n$, the forgetful morphism that forgets the labelling of the $k$-th point.

\subsubsection{A variety of rational normal curves}

  Let $e_1, \ldots e_{n} \in \P^{n - 2}$ be $n$ points in general position.
  Let $\mathcal{H}$ be the Hilbert scheme of subschemes of $\P^{n-2}$.
  Let $V_0(e_1, \ldots, e_n) \subset \mathcal{H}$ be the subvariety of rational normal curves in $\P^{n - 2}$ passing through the points $e_1, \ldots, e_n$, and let $V(e_1, \ldots, e_n)$ be the closure  of $V_0(e_1, \ldots, e_n)$ inside the Hilbert scheme of subschemes of $\P^{n-2}$.
  The boundary $V(e_1, \ldots, e_n) \setminus V_0(e_1, \ldots, e_n)$ consists on reducible rational normal curves, i.e. reducible non-degenerate curves of degree $n$ such that each component is a rational normal curve in its projective span.

  There exists an isomorphism $V_0(e_1, \ldots, e_{n}) \cong \M_{0,n}$ (see \cite{dolgachev_ortland}) associating to a rational normal curve passing by $e_1, \ldots, e_n$ the corresponding ordered configuration of $n$ points in $\P^1$. This can be extended \cite{kapranov_veronese} to an isomorphism between $V(e_1, \ldots, e_n)$ and $\overline{\M}_{0,n}$.

\subsubsection{The blow-up construction}
  The following construction is due to Kapranov \cite{kapranov_chow}: let $e_1, \ldots, e_{n-1} \in \P^{n-3}$ be $(n-1)$ points in general position. Consider the following sequence of blow-ups:
  
\begin{enumerate}
  \item Blow-up the points  $e_1, \ldots, e_{n-1}$.
  \item Blow-up the proper transforms of lines spanned by pairs of points from $\{e_1, \ldots, e_{n-1} \}$.
  \item Blow-up the proper transforms of planes spanned by triples of points from $\{e_1, \ldots, e_{n-1} \}$.
  \item[$\vdots \ $]
  \item[$(n-4)$.] Blow-up the proper transforms of linear subspaces spanned by $(n-4)$-ples of points from $\{e_1, \ldots, e_{n-1} \}$.
\end{enumerate}

  Let $\blowup(\P^{n-3})$ be the $(n - 3)$-variety obtained in this way, and $b : \blowup(\P^{n-3}) \to \P^{n-3}$ the composition of this sequence of blow-ups. We will call this map the \emph{Kapranov blow-up map centered in the points $e_1, \ldots, e_{n - 1}$}.

\begin{theorem}[Kapranov \cite{kapranov_chow}]
  Let $n \geq 4$. Then, the moduli space $\MKnudn$ is isomorphic to $\blowup(\P^{n-3})$.
\end{theorem}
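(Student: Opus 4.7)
The plan is to build an explicit birational map $\phi:\P^{n-3} \dashrightarrow \MKnudn$ whose indeterminacy locus is exactly the union of linear subspaces blown up in the construction of $\blowup(\P^{n-3})$, and then to promote the resolved map into an isomorphism. First, I fix the same $n-1$ general points $e_1,\ldots,e_{n-1} \in \P^{n-3}$ used in the Kapranov blow-up. Since a rational normal curve of degree $n-3$ in $\P^{n-3}$ is uniquely determined by any $n$ of its points in linearly general position, for generic $x \in \P^{n-3}$ there is a unique such curve $\Gamma_x$ passing through $e_1,\ldots,e_{n-1},x$. The ordered pointed curve $(\Gamma_x; e_1,\ldots,e_{n-1}, x)$ defines a point of $\M_{0,n}$, and through the identification $V_0(e_1,\ldots,e_n) \cong \M_{0,n}$ already recalled, this yields the desired birational map $\phi$.

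Next I locate the indeterminacy of $\phi$: it fails to be defined precisely when $\{e_1,\ldots,e_{n-1},x\}$ is not in linearly general position, which happens exactly when $x$ lies in some linear subspace $L_I := \operatorname{span}\{e_i : i \in I\}$ with $I \subsetneq \{1,\ldots,n-1\}$ and $|I|\leq n-4$. These loci, ordered by dimension, are precisely the centers of the iterated blow-up construction. A standard blow-up argument then shows that after blowing up the points $e_i$, the proper transforms of the lines $L_{ij}$ become smooth and disjoint and may themselves be blown up, and that the pattern continues through all dimensions up to $n-4$; after each step the indeterminacy of the proper transform of $\phi$ along the corresponding stratum is eliminated. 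Hence $\phi \circ b$ extends to a regular morphism $\tilde{\phi}: \blowup(\P^{n-3}) \to \MKnudn$.

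Finally I show $\tilde{\phi}$ is an isomorphism. Both source and target are smooth projective of dimension $n-3$, and $\tilde{\phi}$ is birational, so by Zariski's main theorem it suffices to prove bijectivity. The key is to identify each exceptional divisor produced when blowing up the proper transform of $L_I$ with the boundary divisor $D_{I \cup \{n\}} \subset \MKnudn$ parametrizing stable curves in which the points indexed by $I \cup \{n\}$ have collided onto a single $\P^1$-bubble. This matches naturally via the modular interpretation: as $x$ approaches $L_I$, the rational normal curve $\Gamma_x$ degenerates so that $e_i$ for $i \in I$ together with $x$ specialize onto a common bubble, and varying the direction of approach parametrizes the bubble's cross-ratio.

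The main obstacle is this last step: checking stratum by stratum, in every codimension, that the exceptional divisors produced by the blow-ups match the combinatorial structure of the boundary of $\MKnudn$, and that $\tilde{\phi}$ is bijective on every boundary stratum. I would proceed by induction on $n$, with base case $\overline{\M}_{0,4} \cong \P^1$ (where no centers are blown up), and reduce the inductive step using the forgetful morphism $\lambda_n:\MKnudn \to \overline{\M}_{0,n-1}$ together with its interpretation as the universal curve. The delicate combinatorial point is that the order of blow-ups is essential: only because points are blown up before lines do the proper transforms of the $L_{ij}$ become pairwise disjoint and smooth, and the same must hold recursively for the higher-dimensional $L_I$, matching the fact that in $\MKnudn$ distinct boundary divisors $D_{I \cup \{n\}}$ for incomparable $I$ meet transversally.
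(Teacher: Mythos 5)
The paper does not prove this statement: it is quoted from Kapranov \cite{kapranov_chow} without proof, so there is no internal argument to compare yours against. Your outline is essentially Kapranov's original strategy --- associate to a general $x\in\P^{n-3}$ the unique rational normal curve of degree $n-3$ through $e_1,\dots,e_{n-1},x$, resolve the resulting rational map to $\MKnudn$ by the iterated blow-up, and match exceptional divisors with boundary divisors $D_{I\cup\{n\}}$ --- and the plan, including the induction on $n$ via the forgetful map and the universal curve, is the right one.

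One step as written is incorrect and needs repair. You assert that $\phi$ is undefined exactly where $\{e_1,\dots,e_{n-1},x\}$ fails to be in linearly general position, and that this failure happens exactly for $x\in L_I$ with $|I|\le n-4$. Neither equivalence holds: linear general position also fails on the hyperplanes $L_I$ with $|I|=n-3$ (there $n-2$ of the points span only a $\P^{n-4}$), yet these hyperplanes are not among Kapranov's centers. The correct statement is that the indeterminacy locus of a rational map from a smooth variety to a projective variety has codimension at least two, so $\phi$ automatically extends over the generic point of each such hyperplane; modularly, the limit there is a reducible degenerate Veronese curve (a rational normal curve of degree $n-4$ inside the hyperplane, carrying the markings $I\cup\{n\}$, glued to a line carrying the remaining two), which is a stable $n$-pointed curve. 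These hyperplanes are therefore mapped birationally onto the boundary divisors $D_{I\cup\{n\}}$ with $|I|=n-3$ rather than being blown up, and your accounting of the boundary in the final bijectivity check must include them, otherwise the divisor count does not close. Beyond this, the two genuinely hard verifications --- that the prescribed order of blow-ups actually resolves $\phi$, and that $\tilde\phi$ is bijective on every boundary stratum --- are deferred as ``standard''; they constitute the real content of Kapranov's theorem, so as it stands the proposal is a correct plan rather than a proof.
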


  Moreover, the images by $b$ of the fibres of the map $\lambda_k$ over the points in the open set $\M_{0,n-1} \subset \overline{\M}_{0,n-1}$ are the rational normal curves in $\P^{n-3}$ passing through the $n - 1$ points $e_1,\ldots, e_{n-1}$ (see \cite[Prop. 3.1]{keel_mckernan}).

\subsubsection{The Cremona inversion}
  Let $e_1, \ldots e_{n - 1} \in \P^{n - 3}$ in general position. Without loss of generality, we may assume $e_k = [0: \cdots : 1 : \cdots : 0]$ for $k = 1, \ldots, n - 2$; and $e_{n - 1} = [1: \cdots : 1]$. The \emph{Cremona inversion with respect to $e_{n-1}$} is the birational map
  
\begin{align*}
  \Cr_{n - 1} : \mathbb{P}^{n - 3} & \tto  \mathbb{P}^{n - 3} \\
  \left[x_0 : \dots : x_{n - 3}\right] & \mapsto  \left[1/x_0 : \dots : 1/x_{n - 3} \right].
\end{align*}

On $\P^2$ the Cremona inversion is given by the linear system of quadrics passing through $e_1,\ e_2$ and $e_3$, on $\P^3$ by the cubics that vanish on the six secant lines
of $e_1,\ e_2,\ e_3$ and $e_4$, and so on.
  The Cremona inversion has the following property: any non-degenerate rational normal curve passing through the points $e_1, \ldots, e_{n - 1}$ is transformed into a line passing by the point $\Cr_{n - 1}(e_{n - 1})$.  
  Let $\tau_{n - 1} : \P^{n - 3} \tto \P^{n - 4}$ be the linear projection with center $\Cr_{n - 1}(e_{n - 1})$.
  From the previous property, we obtain that the composition $\tau_{n - 1} \circ \Cr_{n - 1}$ contracts non-degenerate rational normal curves passing through $e_1, \ldots, e_{n - 1}$.

  Let $k \in \{1,\ldots,n-1\}$. It is straightforward to see that one can let $e_k$ play the role of $e_{n - 1}$ in the definition of $\Cr_{n - 1}$, and define similarly the Cremona inversion $\Cr_k$. 
  Let $\tau_k : \P^{n - 3} \tto \P^{n - 4}$ be the linear projection with center $\Cr_k(e_{k})$.

\begin{lemma} \label{lem:composition_contracts}
  Let $e_1, \ldots e_{n - 1} \in \P^{n - 3}$ in general position. 
  Then, the composition $\tau_k \circ \Cr_k$ contracts the non-degenerate rational normal curves passing through $e_1, \ldots, e_n$.
\end{lemma}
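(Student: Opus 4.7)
The plan is to reduce the lemma to the case $k = n-1$, which is precisely what was established in the paragraph immediately preceding the statement. The reduction exploits the $S_{n-1}$-symmetry of the configuration $\{e_1,\ldots,e_{n-1}\}$ under the action of $\PGL(n-2)$.

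First I observe that $n-1 = (n-3)+2$ points in general position in $\P^{n-3}$ form a projective frame, so $\PGL(n-2)$ acts simply transitively on such configurations, and in particular every permutation of the points is induced by a projective automorphism. I pick $\phi \in \PGL(n-2)$ that transposes $e_k$ and $e_{n-1}$ while preserving the full set $\{e_1,\ldots,e_{n-1}\}$. The definition of $\Cr_k$ given in the text is obtained from that of $\Cr_{n-1}$ simply by interchanging the roles played by $e_k$ and $e_{n-1}$, which translates into the conjugation identity
\begin{equation*}
  \Cr_k \;=\; \phi^{-1}\circ \Cr_{n-1}\circ \phi,
\end{equation*}
together with an analogous identity $\tau_k = \psi\circ \tau_{n-1}\circ \phi$ for a suitable projective isomorphism $\psi$ between the two target spaces $\P^{n-4}$ (reflecting that the center $\Cr_k(e_k)$ of $\tau_k$ coincides with $\phi^{-1}(\Cr_{n-1}(e_{n-1}))$).

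Now let $\Gamma \subset \P^{n-3}$ be a non-degenerate rational normal curve through $e_1,\ldots,e_{n-1}$. Since $\phi$ merely permutes these points, $\phi(\Gamma)$ is again a non-degenerate rational normal curve through the same set of points. By the case $k = n-1$ already handled in the text, $(\tau_{n-1}\circ \Cr_{n-1})(\phi(\Gamma))$ is a single point; post-composing with $\psi$ preserves this, and so $(\tau_k\circ \Cr_k)(\Gamma)$ is a single point as well. This proves the lemma.

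The main obstacle in writing this out is the coordinate bookkeeping behind the conjugation identity: the Cremona inversion $\Cr_{n-1}$ was defined using the specific coordinate system in which $e_{n-1} = [1{:}\cdots{:}1]$ and the remaining $e_i$ are vertices of the standard simplex, so one must verify that the construction depends only on the projective frame $(e_1,\ldots,e_{n-1})$ and not on the ambient coordinates, i.e.\ that it is genuinely covariant under the change of frame induced by $\phi$. This amounts to a routine verification that Cremona inversion at a distinguished point of a projective frame is intrinsically determined by that frame.
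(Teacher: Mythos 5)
Your proof is correct and takes essentially the same route the paper intends: the paper offers no written argument for this lemma, merely remarking that one ``lets $e_k$ play the role of $e_{n-1}$'' in the definition of $\Cr_{n-1}$, and your conjugation identities $\Cr_k=\phi^{-1}\circ\Cr_{n-1}\circ\phi$ and $\tau_k=\psi\circ\tau_{n-1}\circ\phi$ are exactly the formal content of that relabeling, reducing to the case $k=n-1$ established in the preceding paragraph. The only point worth flagging is that the ``$e_1,\ldots,e_n$'' in the statement is a typo for ``$e_1,\ldots,e_{n-1}$'', which you correctly read through.
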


Let us denote $H_t$, for $t\neq k$, the hyperplane in $\P^{n-3}$ spanned the points $e_i$, with $i\neq k,t.$ There are $n-2$ such hyperplanes and each one is contracted to a point by $\Cr_k$.

\begin{proposition}[ \cite{kapranov_veronese}] \label{prop:cuadrado_kapranov}
  Let $e_1, \ldots e_{n - 1} \in \P^{n - 3}$ in general position. 
  Let $k \in \{1,\ldots,n - 1\}$. Then, the following diagram is commutative:

\begin{equation*} 
  \begin{tikzcd}[column sep=huge]
    \MKnudn \arrow[r,"\lambda_k"] \arrow[d,"b",swap] & \overline{\mathcal{M}}_{0,n-1}  \arrow[d,"b_k"] \\
 \P^{n - 3}  \arrow[r,dashed,"\tau_k \ \circ \ \Cr_{k}"]  &   \P^{n - 4}
  \end{tikzcd}
\end{equation*}

  Here, the map $b_k$ is the Kapranov blow-up map centered in the images of the hyperplanes $H_t$, for $1\leq t \leq n-1$ and $t \neq k$, by $\tau_k \circ \Cr_k$.
\end{proposition}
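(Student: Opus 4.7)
The plan is to leverage Kapranov's theorem, which identifies $\MKnudn$ with the iterated blow-up $\blowup(\P^{n-3})$ centred at $e_1,\ldots,e_{n-1}$, together with the description of the fibres of $\lambda_k$ recalled just above. With this in hand, the commutativity of the diagram will reduce to a geometric statement about the behaviour of rational normal curves under $\tau_k \circ \Cr_k$, which is exactly what Lemma \ref{lem:composition_contracts} provides.

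First I would construct the map $\psi$ making the diagram commute. By the Keel--McKernan description already quoted, the general fibre of $\lambda_k$ is sent by $b$ onto a rational normal curve through $e_1,\ldots,e_{n-1}$, and by Lemma \ref{lem:composition_contracts} each such curve is contracted to a point by $\tau_k \circ \Cr_k$. Hence $\tau_k \circ \Cr_k \circ b$ is constant on the general fibre of $\lambda_k$ and therefore descends to a rational map $\psi \colon \overline{\M}_{0,n-1} \tto \P^{n-4}$ satisfying $\psi \circ \lambda_k = \tau_k \circ \Cr_k \circ b$. This establishes the commutativity of the square on a dense open subset, and hence everywhere all four maps are defined.

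The final step, and the main obstacle, is to identify $\psi$ with the Kapranov blow-up map $b_k$. Each hyperplane $H_t$ (for $t \neq k$) is a coordinate hyperplane in the Cremona-adapted coordinates centred at $e_k$, so $\Cr_k$ contracts it to a point and therefore $\tau_k \circ \Cr_k$ contracts $H_t$ to a point $f_t \in \P^{n-4}$; genericity of the original configuration of $e_i$'s forces the $n-2$ points $f_t$ to be in general position in $\P^{n-4}$. The delicate point is then to match the entire blow-up tower: for every subset $S \subset \{e_i : i \neq k\}$ one needs the proper transform of $\operatorname{span}(S)$ in $\MKnudn$ to be sent by $\lambda_k$ to the proper transform, in $\overline{\M}_{0,n-1}$, of the corresponding linear span of the $f_t$'s. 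I would handle this by writing $\tau_k \circ \Cr_k$ as an explicit monomial map in suitable coordinates and comparing the resulting linear system with the Kapranov linear system of $(n-3)$-forms vanishing with multiplicity $n-4$ at the $f_t$'s; once the two linear systems are shown to coincide, uniqueness of the Kapranov model forces $\psi = b_k$ and concludes the proof.
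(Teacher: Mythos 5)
The paper gives no proof of this Proposition --- it is quoted from \cite{kapranov_veronese} --- so your attempt can only be judged on its own terms. Your overall strategy (show $\tau_k\circ\Cr_k\circ b$ is constant on the fibres of the forgetful map, descend, then identify the descended map with a Kapranov blow-up by comparing linear systems) is the natural one, and the second step, while only sketched, is the right kind of argument.

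There is, however, a genuine gap in the first step. The Keel--McKernan description you invoke --- fibres mapping under $b$ to rational normal curves through \emph{all} of $e_1,\ldots,e_{n-1}$ --- describes the fibres of the forgetful map that drops the \emph{moving} point of the model $b$ (the point labelled $n$ in the convention of the Remark following the Proposition), not the fibres of $\lambda_k$ for $k\leq n-1$. You can see the problem already for $n=5$: realizing $\overline{\M}_{0,5}$ as the blow-up of $\P^2$ at $e_1,\dots,e_4$ with exceptional divisors $E_i=\delta_{\{i,5\}}$ and $\delta_{\{i,j\}}$ ($i,j\leq 4$) the proper transform of the line through the complementary pair, the intersection numbers of a fibre of $\lambda_1$ with the boundary force its class to be $h-E_1$; so the fibres of $\lambda_1$ map to \emph{lines through $e_1$}, while the fibres of $\lambda_5$ map to the conics through all four points. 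On the other hand $\tau_1\circ\Cr_1$ is given by the pencil of conics through $e_1,\dots,e_4$, which restricts to an isomorphism on a general line through $e_1$. Hence $\tau_k\circ\Cr_k\circ b$ is \emph{not} constant on the fibres of $\lambda_k$ as you have set things up, and the descent collapses; what Lemma \ref{lem:composition_contracts} actually yields is constancy on the fibres of $\lambda_n$, so your argument proves commutativity of a square with $\lambda_n$ on top (for each $k$) --- which is also the only version compatible with Proposition \ref{prop:h_N_is_composition}, where $i_\Omega\circ\tau_k\circ\Cr_k$ is asserted to be independent of $k$. You must either justify the statement with the indexing corrected in this way, or else supply the correct description of the $b$-images of the fibres of $\lambda_k$. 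Finally, in your last step, ``uniqueness of the Kapranov model'' is not by itself conclusive: such a model of $\overline{\M}_{0,n-1}$ depends on which label is declared to be the moving point, and specifying that choice for $b_k$ is precisely where this indexing issue reappears.
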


\begin{remark}
We observe here a little subtlety. We only get here $n-1$ forgetful maps through Cremona transformations, because we are tacitly assuming that Kapranov's blow-up construction of $\MKnudn$ labels with integers from 1 to $n-1$ the points $e_1,\dots,e_{n-1}$ of the projective base of $\P^{n-3}$, and labels as $n$ the last point (which is free to move inside the $\P^{n-3}$ birational model of $\MKnudn$). This is due to this small asymmetric aspect of Kapranov's construction, but it is clear that one could assume that the last, free, point is labeled with any $k\in\{ 1, \dots, n-1\}$, and obtain other forgetful maps.
\end{remark}

\subsubsection{Rationalizations of $\M^{\operatorname{GIT}}_{0,2g}$}

  Let $g \geq 3$, and let $e_1, \ldots, e_{2g - 1} \in \P^{2g - 3}$ in general position. 
  Let $\Omega$ be the linear system of $(g-1)$-forms in $\P^{2g-3}$ vanishing with multiplicity $g-2$ in $e_1, \ldots, e_{2g - 1} \in \P^{2g - 3}$. 
 
\begin{theorem}[\cite{kumar}] \label{th:kumar_Omega}
  Let $g \geq 3$, and let $e_1, \ldots, e_{2g - 1} \in \P^{2g - 3}$ be in general position. 
  Then, the rational map $$i_{\Omega}:\P^{2g-3} \tto  \Omega^*$$ induced by the linear system $\Omega$ maps $\P^{2g-3}$ birationally onto $\M^{\operatorname{GIT}}_{0,2g}$.
\end{theorem}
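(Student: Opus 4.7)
The plan is to combine Kapranov's blow-up construction of $\overline{\M}_{0,2g}$ with the contraction $c_{2g}\colon \overline{\M}_{0,2g} \to \MGIT$, and to identify $\Omega$ with the pullback of the natural very ample polarization of $\MGIT$. The starting point is a geometrically defined rational map $\psi\colon \P^{2g-3} \tto \MGIT$: for a generic $p\in \P^{2g-3}$, the $2g$ points $e_1,\ldots,e_{2g-1},p$ are in general position in $\P^{2g-3}$, so by Castelnuovo's lemma there is a unique rational normal curve of degree $2g-3$ through them. Parametrizing this curve as $\P^1$ produces an ordered configuration of $2g$ points on $\P^1$, well-defined modulo $\PGL(2)$, hence a point of $\M_{0,2g}\subset \MGIT$. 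By Kapranov's theorem, $\psi$ extends to the composition $c_{2g}\circ b^{-1}$, where $b\colon \overline{\M}_{0,2g} \to \P^{2g-3}$ is the Kapranov blow-up; in particular $\psi$ is already birational onto $\MGIT$.

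The main task is then to show that $\psi = i_\Omega$. Fix an embedding $\MGIT \hookrightarrow \P^N$ coming from the standard GIT polarization, i.e.\ from the $\PGL(2)$-invariant sections of $\boxtimes_{i=1}^{2g}\OP(1)$ on $(\P^1)^{2g}$. Composing $\psi$ with this embedding yields a rational map $\P^{2g-3}\tto \P^N$ defined by some linear system $\Sigma$ on $\P^{2g-3}$. I would show $\Sigma = \Omega$ by analyzing how a $\PGL(2)$-invariant section of multidegree $(1,\ldots,1)$ restricts to a generic rational normal curve $R$ through $e_1,\ldots,e_{2g-1}$: the free parameter on $R$ is precisely the position of the moving $2g$-th point, and the restriction is a polynomial of degree $1$ in that parameter. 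Since $R$ has degree $2g-3$ in $\P^{2g-3}$, a degree-and-multiplicity bookkeeping (note that $(g-1)(2g-3) - (g-2)(2g-1) = 1$) forces the corresponding form on $\P^{2g-3}$ to have degree $g-1$ and multiplicity at least $g-2$ at each $e_i$, giving $\Sigma\subseteq \Omega$. Equality is obtained by matching dimensions: $\dim\Sigma = N+1$ is the dimension of $\PGL(2)$-invariants in multidegree $(1,\ldots,1)$, and a direct count yields the same value for $\dim\Omega$.

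Once $\Sigma = \Omega$, $i_\Omega$ factors as $\P^{2g-3}\tto \overline{\M}_{0,2g}\xrightarrow{c_{2g}} \MGIT \hookrightarrow \P^N$, so it is birational onto $\MGIT$. The hardest step is this identification $\Sigma = \Omega$: tracking degrees and multiplicities through Kapranov's construction yields $\Sigma\subseteq \Omega$ with little pain, but showing that $\dim \Omega$ matches the number of $\PGL(2)$-invariants in multidegree $(1,\ldots,1)$ (equivalently, that $\Omega$ admits no extra independent forms beyond those coming from invariants) is the substantive computation, carried out in \cite{kumar} via classical invariant theory.
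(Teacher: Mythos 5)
The paper offers no proof of this statement---it is imported verbatim from \cite{kumar}---so there is nothing internal to compare against; I am judging your argument on its own terms. Your overall architecture is the natural one, and it is the one the paper itself relies on elsewhere: Lemma~\ref{contrLS} records precisely the factorization $i_\Omega\circ b=c_{2g}$, and your map $\psi$, defined via Castelnuovo's lemma on the $2g$ points $e_1,\dots,e_{2g-1},p$, is exactly $c_{2g}\circ b^{-1}$. Its birationality onto $\MGIT$ then follows from Kapranov's theorem together with the fact that $c_{2g}$ restricts to the identity on $\M_{0,2g}$. That half is correct.

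The gap is in the identification $\Sigma=\Omega$, which is the actual content of the theorem. Your bookkeeping yields only the single relation $d(2g-3)-\sum_i m_i=1$, where $d$ is the degree of the forms in $\Sigma$ and $m_i$ their order of vanishing at $e_i$ along a generic rational normal curve $R$. This does not force $d=g-1$ and $m_i=g-2$: even assuming all $m_i$ equal to some $m$ and no excess intersection of the base scheme of $\Sigma$ with $R$, the relation $d(2g-3)-m(2g-1)=1$ has the infinite solution family $d\equiv g-1 \pmod{2g-1}$ (e.g.\ $d=3g-2$, $m=3g-5$ also works), and without those assumptions it determines nothing. To pin down $\Sigma$ one must genuinely compute $\psi^*\O_{\MGIT}(1)$---for instance by pulling back to $\overline{\M}_{0,2g}$, expressing $c_{2g}^*\O_{\MGIT}(1)$ in boundary and tautological classes, and pushing down through the Kapranov blow-up to read off the degree and the multiplicities along \emph{all} blown-up centers, including the secant spans of the $e_i$, where one must check that no further vanishing conditions enter $\Omega$. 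Likewise the closing dimension count, that $\dim\Omega$ equals the number of independent $\PGL(2)$-invariants of multidegree $(1,\dots,1)$, is asserted rather than proved; as you yourself note, this is the substantive invariant-theoretic computation in \cite{kumar}. So the skeleton is right, but the two load-bearing computations are deferred rather than carried out, and the "bookkeeping" step as written would not close on its own.
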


 We also observe that the contraction map $c_{2n}$ can also be described in terms of Kumar's linear system $\Omega$:

\begin{lemma}[\cite{bolognesi_BLMS}]\label{contrLS}
  Let $g \geq 3$, and let $e_1, \ldots, e_{2g - 1} \in \P^{2g - 3}$ in general position. 
  Then, the following diagram is commutative:

\begin{equation*} 
  \begin{tikzcd}[column sep=huge]
    \overline{\mathcal{M}}_{0, 2g} \arrow[rd,"c_{2n}"] \arrow[d,"b",swap]  \\
 \P^{2g-3}  \arrow[r,dashed,"i_{\Omega}"]  & \M^{\operatorname{GIT}}_{0,2g}
  \end{tikzcd}
\end{equation*}

  Here, the map $b$ is the Kapranov blow-up map centered in $p_1, \ldots, p_{2g - 1}$.
\end{lemma}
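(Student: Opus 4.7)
The plan is to establish commutativity by checking that the two rational maps $c_{2g}$ and $i_\Omega \circ b$ agree on the dense open subset $\M_{0,2g} \subset \overline{\M}_{0,2g}$, and then to invoke separation of the projective target $\MGIT$. Two rational maps from $\overline{\M}_{0,2g}$ to the projective variety $\MGIT$ that coincide on a common dense open subset necessarily coincide as rational maps everywhere.

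First I would spell out the restriction of $b$ to $\M_{0,2g}$. Kapranov's theorem identifies $\overline{\M}_{0,2g}$ with the iterated blow-up $\blowup(\P^{2g-3})$ in such a way that a smooth configuration $[(p_1,\ldots,p_{2g})] \in \M_{0,2g}$ corresponds to the unique rational normal curve $R \subset \P^{2g-3}$ passing through the fixed reference points $e_1,\ldots,e_{2g-1}$ and one additional varying point $p \in \P^{2g-3}$; the original abstract configuration is recovered by reading off the $2g$ points along $R$ via the birational identification $R \cong \P^1$. The blow-down map $b$ therefore restricts on $\M_{0,2g}$ to the open immersion that sends such a configuration to the position of this additional point $p$.

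By Theorem \ref{th:kumar_Omega}, the map $i_\Omega$ is birational, and on the open locus where it is a local isomorphism it realizes the inverse correspondence: a general $p \in \P^{2g-3}$ is sent to the GIT class of the $2g$-pointed rational normal curve through $e_1,\ldots,e_{2g-1},p$. Hence the composition $i_\Omega \circ b$ on $\M_{0,2g}$ coincides with the tautological inclusion $\M_{0,2g} \hookrightarrow \MGIT$. On the other hand, by construction the contraction $c_{2g}$ restricts to the identity on this common open subset, since both $\overline{\M}_{0,2g}$ and $\MGIT$ contain $\M_{0,2g}$ as an open piece and $c_{2g}$ is the identity on it by definition. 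The two rational maps thus agree on $\M_{0,2g}$ and, by separation of the target, on all of $\overline{\M}_{0,2g}$.

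The main subtle point is extracting from Theorem \ref{th:kumar_Omega} the explicit geometric description of $i_\Omega$ on the open locus of points in general position with respect to the $e_i$: Kumar's linear system $\Omega$ is tuned precisely so that the associated birational map to $\MGIT$ is the classification of rational normal curves through $2g-1$ fixed and one varying point in $\P^{2g-3}$, which is exactly the same incidence datum Kapranov uses to parametrize $\overline{\M}_{0,2g}$. Once this geometric content is unpacked, the remainder of the argument is the standard density-and-separation principle for rational maps into a projective variety.
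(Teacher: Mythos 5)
The paper itself offers no proof of this lemma---it is imported verbatim from \cite{bolognesi_BLMS}---so I can only judge your argument on its own terms. Its skeleton is fine: two rational maps from the irreducible variety $\overline{\M}_{0,2g}$ to the projective (hence separated) variety $\MGIT$ that agree on the dense open subset $\M_{0,2g}$ agree as rational maps, and your descriptions of $c_{2g}|_{\M_{0,2g}}$ (the identity, by construction of the contraction) and of $b|_{\M_{0,2g}}$ (Kapranov's tautological identification of a smooth configuration with the position of the $2g$-th point relative to $e_1,\ldots,e_{2g-1}$) are both correct.

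The gap is the step you yourself flag as ``the main subtle point'' and then simply assert: that $i_{\Omega}$ sends a general $p\in\P^{2g-3}$ to the GIT class of $e_1,\ldots,e_{2g-1},p$ read off along the rational normal curve through these $2g$ points. Theorem \ref{th:kumar_Omega} only says that $i_\Omega$ is \emph{some} birational map onto $\MGIT$; this does not determine it, since postcomposing with any nontrivial birational automorphism of $\MGIT$ yields another birational map onto $\MGIT$ induced by some linear system of the same degree. Once your assertion about $i_\Omega$ is granted, the lemma is immediate from the other two identifications---so your argument in effect reduces the lemma to an equivalent unproved statement rather than proving it. Closing the gap requires actually analyzing the linear system $\Omega$: for instance, showing that $b^*\Omega$ is the complete linear system of the semiample line bundle on $\overline{\M}_{0,2g}$ that defines $c_{2g}$ (equivalently, computing the class of $b^*\OO(g-1)-(g-2)\sum E_i$ in $\Pic(\overline{\M}_{0,2g})$ under Kapranov's isomorphism and matching it with the GIT polarization), or at least verifying that $\Omega$ separates the $(2g-4)$-dimensional family of rational normal curves through $e_1,\ldots,e_{2g-1}$ compatibly with the moduli interpretation. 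That computation is the actual content of the cited result in \cite{bolognesi_BLMS}, and it is absent here.
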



  Let $e_0 \in \P^{2g - 3}$ such that $w = i_{\Omega}(e_0)$ lies in the open set $\M_{0,2g} \subset \M^{\operatorname{GIT}}_{0,2g}$. 
  The point $w$ can represent a hyperelliptic genus $(g - 1)$ curve $C_w$ (namely the double cover of $\P^1$ ramifying in the $2g$ marked points) together with an ordering of the Weierstrass points.
  Let $\SUw$ be the moduli space of rank 2 semistable vector bundles with trivial determinant over the curve $C_w$, that are invariant w.r.t. the hyperelliptic involution.

  Consider the partial linear system $\Lambda$ of $\Omega$ consisting of the $(g - 1)$-forms in $\P^{2g-3}$ vanishing with multiplicity $g - 2$  in all the points $e_0, e_1, \ldots, e_{2g - 1}$. Let $\kappa: \MGIT \tto  \Lambda ^*$ be the rational projection induced by the linear system $\Lambda$.

\begin{theorem}[\cite{kumar}]\label{th:kumar_double_cover}
  Let $g \geq 3$, and let $e_1, \ldots, e_{2g - 1} \in \P^{2g - 3}$ be $2g - 1$ points in general position.  
  Let $e_0 \in \P^{2g - 3}$ such that $w = i_{\Omega}(e_0)$ lies in the open set $\M_{0,2g} \subset \M^{\operatorname{GIT}}_{0,2g}$.
  Then, the map $\kappa$ induced by the linear system $\Lambda$ is of degree 2 onto a connected component of the moduli space $\SUw$.
  Furthermore, the map $\kappa$ ramifies along the Kummer variety $\Kum(C_w) \subset \SUw$.

\begin{equation*}
  \begin{tikzcd}
    \P^{2g-3} \arrow[r,dashed,"i_\Omega"] \arrow[rd,swap,dashed,"i_\Lambda"] & \MGIT\arrow[d,dashed,"\kappa"] \\
    &                                          \SUw 
  \end{tikzcd}
\end{equation*}
\end{theorem}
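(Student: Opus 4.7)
The plan is to construct a modular map $\widetilde{\kappa}: \MGIT \dashrightarrow \SUw$, identify it with the linear projection $\kappa$ induced by $\Lambda$, and then read off the degree and ramification from the modular description.

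First, I would identify $\SUw$ as a moduli space of parabolic rank $2$ bundles on $(\P^1, w)$ with trivial parabolic determinant and half-integral weights at the $2g$ branch points, via the invariant pushforward $\pi_*$ along the double cover $\pi: C_w \to \P^1$. For this weight choice, the moduli of such parabolic bundles is birational to $\MGIT$, since a generic parabolic bundle is classified, after trivializing the underlying bundle on $\P^1$, by the $2g$ parabolic directions modulo the residual $PGL(2)$-action. Composing these identifications gives a dominant rational map $\widetilde{\kappa}: \MGIT \dashrightarrow \SUw$ onto a connected component.

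Second, I would pull back the polarization. Composing $\widetilde{\kappa}$ with the embedding of the invariant moduli into $|2\Theta|$ on $\Jac(C_w)$, and then precomposing with $i_\Omega^{-1}$, one obtains a rational map from $\P^{2g-3}$ given by a linear subsystem of $\Omega$. A Riemann--Roch computation on the Kapranov blowup model, combined with the $S_{2g-1}$-symmetry among the points $e_1, \dots, e_{2g-1}$ and the singled-out role of $e_0 = i_\Omega^{-1}(w)$ (which records the branch locus defining $C_w$), would force this subsystem to coincide with $\Lambda$: the $(g-1)$-forms vanishing to order $g-2$ at all of $e_0, e_1, \dots, e_{2g-1}$.

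Third, the degree and ramification emerge from the natural involution on the parabolic moduli that swaps the two parabolic directions at a chosen marked point, which on the $C_w$ side corresponds to a Hecke/elementary transformation along a Weierstrass fibre. This provides the generic $2{:}1$ structure, and its fixed locus is precisely the set of parabolic bundles whose underlying bundle splits as $L \oplus L^{-1}$ with parabolic directions compatible with the decomposition. Under the identification with $\SUw$, this fixed locus becomes $\Jac(C_w)/\langle \pm 1 \rangle = \Kum(C_w)$, so the ramification locus of $\kappa$ is exactly the Kummer variety.

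The main obstacle I expect is step two, the linear system computation: matching the theta-pullback with $\Lambda$ requires controlling how the Hecke/parabolic base locus encodes precisely the multiplicity $g-2$ vanishing conditions at the chosen $2g$ points of $\P^{2g-3}$. This passes through detailed cohomology and base-locus arguments on the Kapranov blowup model of $\MKnudn$, and it is here that one uses the fine projective geometry of $\SUw$ rather than just its rough modular description.
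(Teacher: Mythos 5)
This statement is not proved in the paper at all: it is Kumar's theorem, imported verbatim with the citation \cite{kumar}, so there is no internal argument to measure your attempt against. Judged on its own terms, your sketch does point at the right circle of ideas — Kumar's proof does pass through the identification of a connected component of $\SUw$ with a moduli space of rank-$2$ quasi-parabolic bundles on $(\P^1,w)$ with half-integral weights, which for that weight choice is the GIT quotient $(\P^1)^{2g}//\PGL(2)=\MGIT$ (Bhosle, Desale--Ramanan), followed by a linear-system computation. But as a proof it has two genuine gaps.

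First, the degree count does not follow from your step one as written: if $\SUw$ were isomorphic to the parabolic moduli space and the latter birational to $\MGIT$, the composite $\widetilde{\kappa}$ would be \emph{birational}, not $2{:}1$. The degree $2$ lives precisely in the correspondence between invariant bundles on $C_w$ and parabolic bundles on $\P^1$: an invariant $E$ admits two lifts of the hyperelliptic involution, differing by $-1$, hence two descents and two points of $\MGIT$ over the same $E$. Your proposed deck involution — swapping the parabolic directions at a single marked point — is not the right one (as stated it changes the parabolic degree/determinant); the correct involution acts at all $2g$ points simultaneously, and identifying its fixed locus with $\Kum(C_w)$ requires showing that the two descents become isomorphic exactly on the S-equivalence classes $[L\oplus L^{-1}]$, where the automorphism group of $E$ is larger — an argument you do not supply (your phrase ``underlying bundle splits as $L\oplus L^{-1}$'' also conflates the bundle on $\P^1$ with the bundle on $C_w$). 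Second, your step two — that the pullback of the polarization is ``forced'' to be $\Lambda$ by Riemann--Roch plus symmetry — is the actual content of the theorem and is left entirely open: the $S_{2g-1}$-symmetry of $e_1,\dots,e_{2g-1}$ and the distinguished role of $e_0$ do not by themselves produce the multiplicity-$(g-2)$ vanishing at $e_0$; one must compute the pullback of the determinant/theta line bundle through the whole chain and match degrees and multiplicities, which is where Kumar's paper (and the related computations in \cite{bolognesi_BLMS}) does its real work. So the proposal is a plausible roadmap to Kumar's argument rather than a proof.
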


\subsection{Forgetful linear systems and $\M^{\operatorname{GIT}}_{0,2g}$} \label{sec:h_N}
  Let $C$ be a smooth genus $g \geq 3$ curve (not necessarily hyperelliptic).
  Let $D$ be a general degree $g$ effective divisor on $C$. 
  Let $N = p_1 + \cdots + p_{2g}$ be a general reduced divisor in the linear system $|2D|$. Consider the span $\PN$ in $\PD$ of the $2g$ marked points $p_1, \ldots,  p_{2g}$. 

\medskip

  We will now apply the discussion of Section \ref{sec:moduli_pointed} to the general points $p_1, \ldots, p_{2g}$ in the projective space $\PN$, taking $n = 2g + 1$. For every $k = 1, \ldots, 2g$, we can compose Prop. \ref{prop:cuadrado_kapranov} and Lemma \ref{contrLS} and get a commutative diagram
  
\begin{equation}\label{gitcontraction} 
  \begin{tikzcd}[column sep=huge]
    \overline{\mathcal{M}}_{0, 2g + 1} \arrow[r,"\lambda_k"] \arrow[d,"b",swap] & \overline{\mathcal{M}}_{0,2g}  \arrow[d,"b_k"] \arrow[rd,"c_{2n}"] \\
 \PN  \arrow[r,dashed,"\tau_k \ \circ \ \Cr_{k}"]  &   \P^{2g - 3} \arrow[r,dashed,"i_{\Omega}"]  & \MGIT
  \end{tikzcd}
\end{equation}

  where $\Omega$ is the linear system of $(g-1)$-forms in $\P^{2g-3}$ vanishing with multiplicity $g-2$ at the $2g - 1$ points $\tau_{k} \circ \Cr_{k}(H_i)$, with $i\neq k$ and $1 \leq i \leq 2g$.  
Let us define the rational map 

$$h_N : \PN \tto |\I_{\Sec^{g-2}(N)}(g)|*.$$ 


\begin{proposition}[\cite{bolognesi_BLMS}] \label{prop:h_N_is_composition}
  Let $N = p_1 + \cdots + p_{2g}$ be a general reduced divisor in the linear system $|2D|$. Then, the map $h_N$ coincides with the composition $i_{\Omega} \circ \tau_k \ \circ \ \Cr_{k}$ for every $k = 1, \ldots, 2g$.
  In particular, the composition $i_{\Omega} \circ \tau_k \ \circ \ \Cr_{k}$ does not depend on $k$.
\end{proposition}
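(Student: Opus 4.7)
The plan is to reduce the claim to the content of diagram (\ref{gitcontraction}) and Kapranov's theorem, identifying both sides as descending from a single morphism on $\overline{\M}_{0,2g+1}$. By the very construction recalled in that diagram, the composition $i_{\Omega} \circ \tau_k \circ \Cr_k$ is, up to the Kapranov birational identification $b : \overline{\M}_{0,2g+1} \dashrightarrow \PN$, precisely the forgetful-contraction morphism $c_{2n} \circ \lambda_k : \overline{\M}_{0,2g+1} \to \MGIT$. So the task reduces to showing that $h_N \circ b$ equals $c_{2n} \circ \lambda_k$ for every $k \in \{1, \ldots, 2g\}$.

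I would establish this in two steps. First, I would verify that the general fibre of $h_N$ is a rational normal curve of degree $2g-2$ in $\PN$ passing through $p_1, \ldots, p_{2g}$. The heart of this check is that any form $F \in |\I_{\Sec^{g-2}(N)}(g)|$ vanishes identically on every such rational normal curve $R$: restricting $F$ to $R \cong \P^1$ gives a polynomial of degree $g(2g-2)$, and using that each $p_i$ lies in $\binom{2g-1}{g-2}$ of the subspaces $\operatorname{span}(M)$ appearing in $\Sec^{g-2}(N)$, one shows that $F|_R$ must vanish to high enough order at each $p_i$ to force it to vanish identically. Second, since the rational normal curves through $p_1, \ldots, p_{2g}$ are precisely the images under $b$ of the fibres of $\lambda_k$ for any $k \in \{1, \ldots, 2g\}$ (a consequence of Kapranov's blow-up model, cf.\ \cite[Prop.~3.1]{keel_mckernan}), it follows that $h_N \circ b$ factors through $\lambda_k$ for every $k$, and the factorisation descends to a morphism to $\MGIT$ by Kumar's theorem (Theorem \ref{th:kumar_Omega}), together with the dimension count matching $\dim |\I_{\Sec^{g-2}(N)}(g)| = \dim \MGIT$.

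Putting these together, $h_N \circ b = c_{2n} \circ \lambda_k$ for every $k$, which is exactly the claim and automatically yields the independence from $k$. The main obstacle is the vanishing computation in the first step: tracking the orders of vanishing of $F$ along the various $(g-1)$-spans and their intersections at the marked points requires care, since the components of $\Sec^{g-2}(N)$ meet with high multiplicity at each $p_i$. A cleaner alternative is to argue directly on the Kapranov blow-up of $\PN$: after blowing up the points and the spans $\operatorname{span}(M)$, the strict transforms of the rational normal curves become disjoint fibres of a morphism whose associated relative line bundle is pulled back from $|\I_{\Sec^{g-2}(N)}(g)|$, making the contraction tautological.
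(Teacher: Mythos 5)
The paper does not actually prove this proposition: it imports it from \cite{bolognesi_BLMS} and only justifies the independence from $k$ by the $\Sigma_{2g}$-invariance of the linear system $|\I_{\Sec^{g-2}(N)}(g)|$. You are therefore attempting a genuine proof, and your overall strategy (descend everything from $\overline{\M}_{0,2g+1}$ via Kapranov's model and the fibration by rational normal curves) is reasonable in outline. But the central claim of your first step is false, and the numerical mechanism you propose for it cannot succeed. You assert that every $F \in |\I_{\Sec^{g-2}(N)}(g)|$ vanishes identically on every rational normal curve $R$ through $p_1,\ldots,p_{2g}$, to be forced by vanishing orders at the $p_i$. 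Count: $\deg(F|_R) = g \cdot \deg R = g(2g-2) = 2g(g-1)$. A general member of the system has multiplicity exactly $g-1$ at each $p_i$ (multiplicity $\geq g$ for a degree-$g$ form would make $F$ a cone with vertex at every marked point, hence zero), and $R \cap \operatorname{span}(M) = M$ for each $(g-1)$-subset $M$ of $N$, so the total vanishing you can extract at the marked points is exactly $2g(g-1) = \deg(F|_R)$: the count is borderline, never strict. What it actually proves is that $F|_R$ is a scalar multiple of the fixed polynomial $\prod_i \ell_i^{\,g-1}$, where $\ell_i$ cuts out $p_i$ on $R\cong\P^1$; that is, all members of the system become \emph{proportional} on $R$, which is what ``$h_N$ contracts $R$'' means in Proposition \ref{prop:RNC}. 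They do not vanish on $R$ --- if they did, the system would vanish on the dense union of these curves and would be empty. Already for $g=3$ a general cubic through the $15$ lines restricts to a nonzero degree-$12$ form on the rational normal quartic, with double zeros at the six points.

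Even after repairing step 1, your step 2 leaves a gap: knowing that $h_N$ and $c_{2n}\circ\lambda_k\circ b^{-1}$ have the same general fibres identifies them only up to a birational automorphism of the target, and matching the dimension of $|\I_{\Sec^{g-2}(N)}(g)|$ with that of an ambient space does not pin that automorphism down; you would still have to check that both maps assign to a general rational normal curve the same configuration point of $\MGIT$. The proof in \cite{bolognesi_BLMS} avoids this by working at the level of linear systems: one computes the composition $i_\Omega\circ\tau_k\circ\Cr_k$ explicitly (the Cremona is given by forms of degree $2g-2$, $\tau_k$ is linear, $\Omega$ has degree $g-1$; after removing the fixed components one lands on forms of degree $g$) and identifies the result with the complete system $|\I_{\Sec^{g-2}(N)}(g)|$, whose $\Sigma_{2g}$-symmetry then gives the $k$-independence for free --- this last observation being the one sentence the present paper retains as its ``proof''.
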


This is due to the fact that the linear system $|\I_{\Sec^{g-2}(N)}(g)|$ is invariant w.r.t the action of the symmetric group $\Sigma_{2g}$ that operates on $\PN$ by linear automorphisms. Let us put together the results of Lemma \ref{lem:composition_contracts}, Theorem \ref{th:kumar_Omega} and Proposition \ref{prop:h_N_is_composition} in the following Proposition:

\begin{proposition} \label{prop:RNC}
  The image of $h_N$ is isomorphic to the GIT moduli space $\MGIT$ of ordered configurations of $2g$ points in $\P^1$. The map $h_N$ is dominant and its general fiber is of dimension 1. More precisely, $h_N$ contracts every rational normal curve $Z$ passing through the $2g$ points $N$ to a point $z$ in $\MGIT$. This point represents an ordered configuration of the $2g$ points $N$ on the rational curve $Z$.
\end{proposition}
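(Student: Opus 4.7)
The plan is to assemble the proof from three already-established inputs: the factorization $h_N = i_{\Omega} \circ \tau_k \circ \Cr_k$ of Proposition~\ref{prop:h_N_is_composition}, the birationality of $i_\Omega$ onto $\MGIT$ from Theorem~\ref{th:kumar_Omega}, and the contraction property of $\tau_k \circ \Cr_k$ established in Lemma~\ref{lem:composition_contracts}. Each factor has a role: $\Cr_k$ accounts for birationality, $\tau_k$ realizes the dimension drop, and $i_\Omega$ provides the modular identification of the target.

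For the identification of the image and for dominance, I would simply observe that in the factorization $h_N = i_\Omega \circ \tau_k \circ \Cr_k$ the Cremona inversion $\Cr_k$ is birational, $\tau_k$ is a dominant linear projection $\PN \dashrightarrow \P^{2g-3}$, and $i_\Omega$ maps $\P^{2g-3}$ birationally onto $\MGIT$. It follows that $h_N$ is dominant and that its image coincides with the image of $i_\Omega$, namely $\MGIT$. Fiber dimension then follows from a dimension count: $\dim \PN = 2g-2$ and $\dim \MGIT = 2g-3$, so the general fiber is $1$-dimensional.

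For the description of these fibers as rational normal curves, I would invoke Lemma~\ref{lem:composition_contracts}, according to which $\tau_k \circ \Cr_k$ contracts every non-degenerate rational normal curve through $p_1, \ldots, p_{2g}$ to a point of $\P^{2g-3}$. Composing with the birational map $i_\Omega$, each such curve is contracted by $h_N$ to a single point of $\MGIT$. The family of rational normal curves through $2g$ general points in $\PN$ has dimension $2g-3$, which together with the $1$-dimensional fibers accounts for all of $\PN$, so through a general point of $\PN$ passes a unique such curve, which must coincide with the fiber of $h_N$ through that point. The modular interpretation then follows from the canonical $\P^1$-structure on such a rational normal curve $Z$: the ordered configuration $p_1, \ldots, p_{2g}$ on $Z$ defines a point in $\M_{0,2g}$, and by the isomorphism $V_0(p_1,\ldots,p_{2g}) \cong \M_{0,2g}$ recalled in Section~\ref{sec:moduli_pointed} this is exactly $h_N(Z)$.

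The main subtlety, as I see it, is confirming that the point of $\MGIT$ produced by the factorization via $i_\Omega$ truly corresponds, under the modular isomorphism, to the configuration of the $2g$ marked points on the contracted rational normal curve $Z$. This compatibility is essentially Kapranov's realization of $\overline{\M}_{0,n}$ as an iterated blow-up of $\P^{n-3}$ at $n-1$ general points, combined with Lemma~\ref{contrLS}, which identifies the contraction $c_{2g} \circ b$ with $i_\Omega$; together these pin down the modular meaning of $h_N(Z)$ and close the argument.
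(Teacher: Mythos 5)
Your proposal is correct and follows exactly the route the paper takes: the paper states Proposition~\ref{prop:RNC} as the result of ``putting together'' Lemma~\ref{lem:composition_contracts}, Theorem~\ref{th:kumar_Omega} and Proposition~\ref{prop:h_N_is_composition}, which are precisely the three inputs you assemble. Your added details (the dimension count showing the general fiber is a unique rational normal curve, and the compatibility of the modular interpretation via Kapranov's blow-up and Lemma~\ref{contrLS}) are correct elaborations of what the paper leaves implicit.
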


This is why these maps where dubbed "forgetful linear systems". In fact the rational normal curves passing through the $2g$ points make up the universal curve over an open subset of $\MGIT$.

\medskip

Since $\Rlin$ is a linear subsystem of $|\I_{\Sec^{g-2}(N)}(g)|$ by Lemma \ref{lem:base_locus}, we have that $\pPN$ factors through $h_N$: 

\begin{equation} \label{diag:factorizacionporhn}
  \begin{tikzcd}
    \PN \arrow[r,dashed,"h_N"] \arrow[rd,swap,dashed,"\pPN"] & \MGIT  \arrow[d,dashed] \\
    &                                           \theta(\SU) 
  \end{tikzcd}
\end{equation}


\subsubsection{A comparison of base loci.}

For future use, we need to compare the locus $\Sec^{g-2}(N)$ and the more intricate locus $\Sec^N$ obtained by intersecting the base locus of $\p$ with $\PN$. This section is devoted to this comparison.

\medskip

  By definition, the points in $\Sec^N$ are given by the intersections $\langle L_{g-1} \rangle \cap \PN $, where $L_{g-1}$ is an effective divisor of degree $g-1$ and $\langle L_{g-1} \rangle$ is its linear span in $\PD$.
  If $L_{g-1}$ is contained in $N$, it is clear that $\langle L_{g-1} \rangle \subset \Sec^{g-2}(N) \subset \PN$.

\begin{lemma} \label{lemma:nonempty_intersection}
  Let $L_{g-1}$ be an effective divisor on $C$ of degree $g - 1$, not contained in $N$. Then, 
\begin{align*}
  \langle L_{g-1} \rangle \cap \PN \not = \phi \qquad  \text{if and only if} \qquad \dim|L_{g-1}| \geq 1.
\end{align*}
  Moreover, if the intersection is non-empty, we have that $$\dim (\langle L_{g-1} \rangle \cap \PN) = \dim |L_{g-1}| - 1.$$
\end{lemma}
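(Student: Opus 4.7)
My plan is to carry out the computation in the dual picture. Setting $V := H^0(C, K+2D)$ so that $\PD = \P V^*$, the annihilator of a linear subspace $A \subset \PD$ is the subspace $W_A \subset V$ of sections whose hyperplane of zeros contains $A$, and standard linear algebra gives $W_{A\cap B} = W_A + W_B$. Applied to $A = \langle L_{g-1}\rangle$ and $B = \PN$, I would identify the annihilators as
\[
W_1 := H^0(C, K+2D-L_{g-1}), \qquad W_2 := H^0(C, K+2D-N),
\]
and obtain
\[
\dim\bigl(\langle L_{g-1}\rangle \cap \PN\bigr) \;=\; (3g-2) - \dim(W_1+W_2),
\]
with a negative right-hand side interpreted as the intersection being empty. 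The problem thus reduces to three Riemann--Roch computations.

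Two of them are routine: because $\deg(K+2D-L_{g-1}) = 3g-1$ is high enough to kill $h^1$, Riemann--Roch gives $\dim W_1 = 2g$; and $N\sim 2D$ forces $\dim W_2 = h^0(C,K) = g$. The heart of the argument is the identification
\[
W_1 \cap W_2 \;=\; H^0(C, K+2D-L_{g-1}-N) \;\cong\; H^0(C, K-L_{g-1}),
\]
where the last step uses $N\sim 2D$ once more. Then $\chi(L_{g-1}) = (g-1)-g+1 = 0$ together with Serre duality yields $h^0(K-L_{g-1}) = h^0(L_{g-1}) = \dim|L_{g-1}|+1$. Substituting, $\dim(W_1+W_2) = 3g-1-\dim|L_{g-1}|$, and hence
\[
\dim\bigl(\langle L_{g-1}\rangle \cap \PN\bigr) = \dim|L_{g-1}| - 1,
\]
which simultaneously gives the non-emptiness criterion and the dimension formula.

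The subtle step to watch is the first equality in the expression for $W_1\cap W_2$: a section vanishing on both $L_{g-1}$ and $N$ vanishes on their scheme-theoretic supremum, and this equals the divisorial sum $L_{g-1}+N$ precisely when the two effective divisors have disjoint supports. Any point shared by $L_{g-1}$ and $N$ would sit automatically in $\langle L_{g-1}\rangle\cap\PN$ without being visible to $\dim|L_{g-1}|$ and would break the stated equivalence. Accordingly I read the hypothesis that $L_{g-1}$ is not contained in $N$ as forcing this disjointness of supports; under that reading the dual-space count above is exact and the lemma follows.
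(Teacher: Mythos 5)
Your argument is correct and is essentially the paper's own proof in dual form: both reduce the lemma to the identification $H^0(C, K+2D-L_{g-1}-N) \cong H^0(C, K-L_{g-1})$ followed by Riemann--Roch and Serre duality, you via the Grassmann formula for the sum of the two annihilators $W_1+W_2$, the paper via the dimension of the join $\langle L_{g-1}, N\rangle$ (whose annihilator is $W_1\cap W_2$). Your closing caveat about disjoint supports corresponds to the paper's own one-line remark that points common to $L_{g-1}$ and $N$ must be counted only once in the annihilator.
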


\begin{proof}
  First, let us suppose that $L_{g-1}$ and $N$ have no points in common. 
  The vector space $V := H^0(C, 2D + K - L_{g-1})$ is the annihilator of the span $\langle L_{g-1} \rangle$ in $\PD$. By the Riemann-Roch theorem, we see that $V$ has dimension $2g$, hence $$\dim \langle L_{g-1} \rangle = (3g - 2) - 2g = g - 2.$$
  Let $d$ be the dimension of the span $\langle L_{g-1}, N \rangle$ of the points of $L_{g-1}$ and $N$. Since the dimension of $\PN = \langle N \rangle$ is $2g-2$, we have that $d \leq (g-2) + (2g - 2) + 1 = 3g-3$, where the equality holds iff $\langle L_{g-1} \rangle \cap \PN$ is empty.

  In particular, this intersection is non-empty iff $d \leq 3g -4$. Since $\dim |K + 2D|^* = \dim \PD = 3g - 2$, this is equivalent to the annihilator space $$W :=  H^0(C, 2D + K - L_{g-1} - N) = H^0(C, K - L_{g-1})$$ being of dimension $\geq 2$. By Riemann-Roch and Serre duality, we obtain that this condition is equivalent to $\dim |L_{g-1}| \geq 1$. 

  More precisely, let us suppose that $\langle L_{g-1} \rangle \cap \PN$ is non-empty and let $e := \dim (\langle L_{g-1} \rangle \cap \PN)$. Then, we have that $$d = 3g - 3 - (e + 1),$$ and the annihilator space $W$ is of dimension $2 + e$. Again by a Riemann-Roch computation, we conclude that $e = \dim |L_{g-1}| - 1$.

  Finally, if $L_{g-1}$ and $N$ have some points in common, we have to count them only once when defining the vector space $W$ to avoid requiring higher vanishing multiplicity to the sections.
 
\end{proof}

  From this Lemma, we conclude that $\Sec^{g-2}(N)$ is a proper subset of $\Sec^N$ if and only if there exists a divisor $L_{g-1}$ not contained in $N$ with $\dim|L_{g-1}| \geq 1$. By the Existence Theorem of Brill-Noether theory (see \cite[Theorem 1.1, page 206]{arbarello_cornalba}) this is possible only if $g \geq 4$ in the non-hyperelliptic case, whereas such a linear system may exist also when $g=3$ when $C$ is hyperelliptic. We will discuss the first low genera cases in Section \ref{sec:lowgenera}.

\section{The hyperelliptic case} \label{sec:gamma}

From now on, $C$ will be a \emph{hyperelliptic} curve of genus $g \geq 3$. 

\smallskip

As we have seen in Lemma \ref{lem:base_locus}, the base locus of the map $\pPN$ contains $\Sec^N$.
  We have seen that the secant variety $\Sec^{g-2}(N)$ is contained in $\Sec^N$ and that this inclusion is strict for $g \geq 4$ in the non-hyperelliptic case.
  
\subsection{A rational normal curve coming from involution invariant secant lines.}  

  In the hyperelliptic case, we have an additional base locus for every $g \geq 3$, which appears due to the hyperelliptic nature of the curve. This locus arises as follows: for each pair $\{ p, i(p) \}$ of involution-conjugate points in $C$, consider the secant line $l$ in $\PD$ passing through the points $p$ and $i(p)$. Let $Q_p$ be the intersection of the line $l$ with $\PN$.
  Let us define $\Gamma \subset \PN$ as the locus of intersection points $Q_p$ when $p$ moves inside $C$.

\begin{figure}
      \centering
          \def\svgwidth{0.7\columnwidth}
          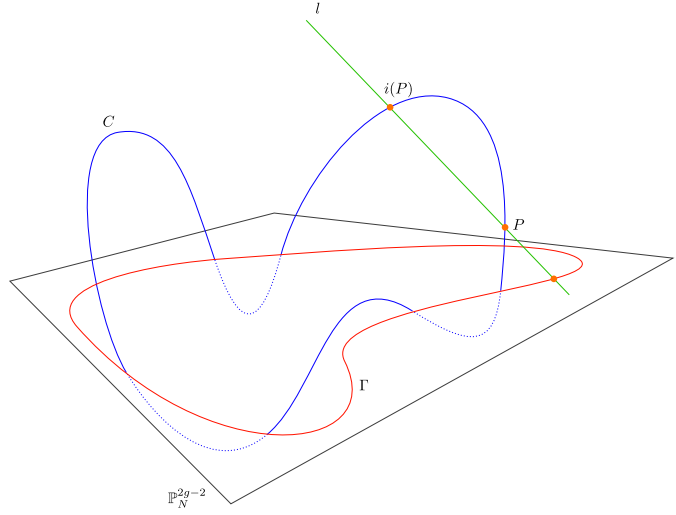
	\caption{The situation in genus $3$. The curves $\Gamma$ and $C$ intersect along the divisor $D$, of degree 6. The secant lines $l$ cutting out the hyperelliptic pencil define the curve $\Gamma$.}\label{fig:gamma}
\end{figure}

\begin{lemma} \label{lem:gamma}
  The locus $\Gamma \subset \PN$ is a non-degenerate rational normal curve in $\PN$. Moreover, $\Gamma$ passes through the $2g$ points $N \subset C$. 
\end{lemma}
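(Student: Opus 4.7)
The plan is to parametrize $\Gamma$ explicitly as the image of a rational map $\overline{\phi}: \P^1 \to \PN$ induced by the hyperelliptic pencil $\pi: C \to \P^1$, and then to recognize $\overline{\phi}$ as a complete Veronese embedding of degree $2g-2$. First I would verify that for generic $p\in C$ the chord $l_p := \langle p, i(p) \rangle$ meets $\PN$ transversally in a single point $Q_p$: using $N\in |2D|$, the annihilator of $\langle l_p, \PN \rangle$ in $H^0(K+2D)$ equals $H^0(K+2D-N-p-i(p)) \cong H^0(K-p-i(p))$, and since $K\sim(g-1)g^1_2$ on a hyperelliptic curve this is $H^0((g-2)g^1_2)$ of dimension $g-1$. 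A short count then forces $\dim(l_p\cap \PN)=0$, so $Q_p$ is well defined.

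Let $s\in H^0(\O_C(2D))$ be a section with zero divisor $N$. Writing $Q_p = \alpha\,\mathrm{ev}_p + \beta\,\mathrm{ev}_{i(p)}$ and demanding that $Q_p$ annihilate $s\cdot H^0(K)\subset H^0(K+2D)$ gives, for every $\omega\in H^0(K)$, the relation
\[
\alpha\, s(p)\, \omega(p) + \beta\, s(i(p))\, \omega(i(p)) = 0.
\]
Since every holomorphic $1$-form on a hyperelliptic curve satisfies $i^*\omega = -\omega$, this collapses to the single linear condition $\alpha s(p)=\beta s(i(p))$, so
\[
Q_p \;=\; s(i(p))\,\mathrm{ev}_p + s(p)\,\mathrm{ev}_{i(p)}
\]
up to scalar. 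The pairing of $Q_p$ with a section $\sigma\in H^0(K+2D)$ then becomes
\[
F_\sigma(p) \;:=\; (i^*s)(p)\,\sigma(p) + s(p)\,(i^*\sigma)(p),
\]
an $i^*$-invariant global section of the $i$-equivariant line bundle $\O_C(K+2D+2i^*D)=\omega_C\otimes \pi^*\O_{\P^1}(2g)$.

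The heart of the argument is the analysis of the linear map $\sigma\mapsto F_\sigma$ from $H^0(K+2D)$ into $H^0(\O_C(K+2D+2i^*D))^{+}$. Setting $h:=\sigma/s$, the kernel condition $F_\sigma\equiv 0$ rewrites as $h+i^*h=0$; for general $N$, the pole divisor of such an anti-invariant meromorphic $1$-form must be simultaneously $i$-invariant and dominated by $N$, hence empty, so $h\in H^0(K)$ and $\ker = sH^0(K)$. To identify the target I would combine the projection formula with the eigenbundle decomposition
\[
\pi_*\omega_C \;=\; \O_{\P^1}(-2)\oplus \O_{\P^1}(g-1),
\]
in which the first summand is forced to be the $(+)$-eigenbundle since $H^0(\omega_C)^+=0$. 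Twisting by $\O_{\P^1}(2g)$ yields $H^0(\O_C(K+2D+2i^*D))^{+}\cong H^0(\P^1,\O_{\P^1}(2g-2))$, of dimension $2g-1$, matching $\dim H^0(K+2D)/sH^0(K)$; a dimension count then shows $\sigma\mapsto F_\sigma$ is surjective.

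Consequently $\overline{\phi}:\P^1\to \PN$ is induced by the complete linear system $|\O_{\P^1}(2g-2)|$ via the identification $H^0(\P^1,\O(2g-2))\cong H^0(K+2D)/sH^0(K)$, and is therefore the classical Veronese embedding of degree $2g-2$; its image $\Gamma$ is a non-degenerate rational normal curve in $\PN\cong \P^{2g-2}$. For each marked point $p_k\in N$ we have $s(p_k)=0$, so $Q_{p_k}=s(i(p_k))\,\mathrm{ev}_{p_k}$ is projectively the point $p_k$ itself, showing $\Gamma$ passes through the $2g$ points of $N$. The main technical hurdle is the eigendecomposition of $\pi_*\omega_C$ together with the kernel computation, which together pin down the target as $H^0(\P^1,\O(2g-2))$ and unlock the Veronese interpretation.
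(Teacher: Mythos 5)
Your argument is correct, but it takes a genuinely different route from the paper's. The two proofs agree on the opening step: both compute the annihilator of $\langle l_p,\PN\rangle$ to show that each conjugate chord meets $\PN$ in exactly one point (your count $h^0(K+2D-N-p-i(p))=h^0(K-h)=g-1$, giving a join of dimension $2g-1$, is the consistent one; the paper prints $g-2$ and $2g$ at this spot, which by the Grassmann formula would actually make the intersection empty, so that is a slip on its side). After that the paths diverge. The paper argues synthetically: $\Gamma$ is rational because it is parametrized by the hyperelliptic pencil, it contains $N$ because each $q\in N$ already lies in $\PN$, and $\deg\Gamma=2g-2$ is obtained by intersecting with the hyperplane spanned by $2g-2$ of the marked points, the key point being that $-2D+D_H+D_l\sim 0$ forces $h\sim q_{2g-1}+q_{2g}$, which genericity of $N$ excludes; non-degeneracy then follows from the general position of $N$. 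You instead write the intersection point explicitly, $Q_p=s(i(p))\,\mathrm{ev}_p+s(p)\,\mathrm{ev}_{i(p)}$, and identify the linear system inducing $\P^1\to\PN$ with the complete system $|\OP(2g-2)|$ via the eigenbundle decomposition $\pi_*\omega_C=\OP(-2)\oplus\OP(g-1)$; this delivers rationality, non-degeneracy, the degree, and the incidence with $N$ in one stroke. The paper's route is more elementary (nothing beyond Riemann--Roch counts), while yours produces an explicit parametrization of $\Gamma$ and makes non-degeneracy transparent (surjectivity of $\sigma\mapsto F_\sigma$ is exactly the statement that $\Gamma$ spans $\PN$). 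Note that both arguments consume the same genericity of $N$ (no conjugate pairs and no Weierstrass points in its support): the paper in its degree count, you in the identification $\ker(\sigma\mapsto F_\sigma)=sH^0(K)$. The only thing to tidy in your write-up is the pointwise relation $\alpha s(p)\omega(p)+\beta s(i(p))\omega(i(p))=0$, which presupposes compatible identifications of the fibres at $p$ and $i(p)$ via $di$ and a choice of $i$-linearizations; this is routine but deserves a sentence.
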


\begin{proof}
  Let us start by showing that the intersection $Q_p$ is non-empty for every line $l=\overline{p,i(p)}$, with $p\in C$. Since $\dim |p + i(p)| = \dim |h| = 1$, the intersection $l \cap \PN$ is non-empty by Lemma \ref{lemma:nonempty_intersection}. 

  Let us show that this intersection is a point, i.e. that the line $l$ is not contained in $\PN$.
  Recall that $\PD = |2D + K|^*$.
  If the points $p$ and $i(p)$ are both not contained in the divisor $N$, the vector space $$ V := H^0(C,2D + K - N - (p + i(p)) = H^0(C,2D + K - N - h)$$ is exactly the annihilator of the span $\langle l, \PN \rangle$ in $\PD$. In particular, the codimension of $\langle l, \PN \rangle$ in $\PD$ is the dimension of $V$. By Riemann-Roch and Serre duality, we get that $\dim V = g - 2$, thus $\dim \langle l, \PN \rangle = 3g - 2 - (g - 2) = 2g$. This means that the intersection  $l \cap \PN$ is a point. 

  For the case $p \in N$ and $i(p) \not \in N$, let us remark that the the annihilator of the span $\langle l, \PN \rangle$ is now the vector space $H^0(C,2D + K - N - i(p))$. Since $$h^0(C,2D + K - N - i(p)) < h^0(C,2D + K - N),$$ we conclude that the line $l$ is not contained in $\PN$.
  The case $\{p, i(p) \} \subset N$ is excluded by our genericity hypotheses on $N$.  
Hence we deduce that the locus $\Gamma$ is a curve in $\PN$.

  Let $q$ be a point of $N$. Then, $q$ is a point of $\PN$. Consequently, the line passing through $q$ and $i(q)$ intersects the plane $\PN$ at $q$. Thus, we have that $\Gamma$ passes through the points of $N$.
  Moreover, it is clear that $N$ is the only intersection of $\Gamma$ and $C$, i.e. $\Gamma \cap C = N$.

  Let us prove now that $\Gamma$ is a rational normal curve. Since $\Gamma$ is defined by the hyperelliptic pencil, it is straightforward to see that $\Gamma$ is rational. Moreover, since the divisor $D$ is general, the span of any subset of $2g - 1$ points of $D$ is $\PN$. Thus, it suffices to show that the degree of $\Gamma \subset \PN$ is precisely $2g - 2$.
  
  Let us set $N = q_1 + \cdots + q_{2g}$ with $q_1, \ldots, q_{2g} \in C$.
  By the previous paragraph, $\Gamma$ passes through these $2g$ points. Let us consider a hyperplane $H$ of $\PN$ spanned by $2g - 2$ points of $N$. Without loss of generality, we can suppose that these points are the first $2g - 2$ points $q_1, \ldots, q_{2g - 2}$. To show that the degree of $\Gamma$ is $2g - 2$, we have to show that the intersection of $\Gamma$ with $H$ consists exactly only of these points.

  Let $l$ be the secant line $\overline{q,i(q)}$, $q\in C$. The intersection $l \cap H$ is empty if and only if the linear span $\langle l, H \rangle$ of $l$ and $H$ in $\PD$ is of maximal dimension $2g - 1$, i.e. of codimension $g - 1$ in $\PD$. 
  Consider the divisors 

\begin{align*}
  D_H= q_1 + \cdots + q_{2g - 2} \qquad \mbox{and} \qquad D_l = q + i(q) \ .
\end{align*}

  As before, if $\{q, i(q)\} \cap \{q_1, \ldots, q_{2g - 2} \}$ is empty, the vector space $W = H^0(C,2D + K - D_H - D_l)$ is the annihilator of the span $\langle l, H \rangle$ in $\PD$.
  In particular, the codimension of $\langle l, H \rangle$ in $\PD$ is given by the dimension of $W$. Again by Riemann-Roch and Serre duality theorems, we can check that  

  $$\dim W = h^0(C,-2D + D_H + D_l) + g - 1.$$

  Thus, the codimension of $\langle l, H \rangle$ in $\PD$ is greater than $g - 1$ if and only if $h^0(C,-2D + D_H + D_l) > 0$. Since $\deg (-2D + D_H + D_l) = 0$, this is equivalent to $-2D + D_H + D_l \sim 0$. Since $N = q_1 + \cdots + q_{2g} \sim 2D$, we have that 

\begin{align*}
  -2D + D_H + D_l \sim 0 &\iff q + i(q) \sim q_{2g - 1} + q_{2g} \\ &\iff h \sim q_{2g - 1} + q_{2g} \\ &\iff i(q_{2g - 1}) = q_{2g}.
\end{align*}

  By our genericity hypothesis on $N$, the last condition is not satisfied.
  Consequently, we conclude that the line $l$ intersects the hyperplane $H$ iff $\{q, i(q)\} \cap \{q_1, \ldots, q_{2g - 2} \}$ is non-empty, i.e. iff $q$ or $i(q)$ is one of the $q_k$ for $k = 1, \ldots, 2g - 2$. In particular, $$\Gamma \cap H = \{q_1, \ldots, q_{2g - 2} \}$$
as we wanted to show.
\end{proof}

  Hence, the curve $\Gamma$ is contracted by the map $h_N$ to a point $w \in \MGIT$ by Proposition \ref{prop:RNC}. The point $w$ represents a hyperelliptic curve $C_{w}$ of genus $g-1$ together with an ordering of the Weierstrass points that correspond to the points of $N$ on the rational curve $\Gamma$. 

{}\subsection{The restriction of the theta map to $\MGIT$} \label{sec:general_case}
  Let us set once again $N = p_1 + \cdots + p_{2g}$, a general divisor in the linear system $|2D|$, and consider the span $\PN$ in $\PD$ of the $2g$ marked points $p_1, \ldots,  p_{2g}$. 

  In this Section, we describe birationally the restrictions of $\theta$ to the fibers $p_D^{-1}(N)$ by means of the maps presented in Section \ref{sec:restriction_map}.
  
\subsubsection{The global factorization} \label{sec:Stein_factorization}
  Recall that the base locus of the map $\p$ is the secant variety $\Sec^{g-2}(C)$ by Proposition \ref{prop:linear_systems_coincide}.
As in \cite{bertram}, one can construct a resolution $\widetilde{\p}$ of the map $\p$ via sequence of blow-ups

\begin{equation*} 
  \begin{tikzcd}
    \widetilde{\PD} \arrow[d, "\blowup_{g-1}", swap]   \arrow[rdd, "\widetilde{\p}"] \\
    \vdots \arrow[d, "\blowup_1", swap] \\
    \PD  \arrow[r, dashed, "\p"] & {|}2 \Theta {|}
  \end{tikzcd}
\end{equation*} 

along the secant varieties $$C = \Sec^0(C) \subset \Sec^1(C) \subset \cdots \subset \Sec^{g-1}(C) \subset \PD.$$
  This chain of morphisms is defined inductively as follows:
  the center of the first blow-up $\blowup_1$ is the curve $C = \Sec^0(C)$. 
  For $k = 2, \ldots, g-1$, the center of the blow-up $\blowup_k$ is the strict transform of the secant variety $\Sec^{k-1}(C)$ under the blow-up $\blowup_{k-1}$.
  
  The map $\p$ is, by definition, the composition of the classifying map $f_D$ and the degree 2 map $\theta$. Thus, the map $f_D$ lifts to a map $\widetilde{f_D}$ which makes the following diagram commute:
\begin{equation} \label{diag:global}
  \begin{tikzcd}
    \widetilde{\PD} \arrow[r,"\widetilde{f_D}"] \arrow[rd,swap,"\widetilde{\p}"] & \SU \arrow[d,"\theta"] \\
    &                               {|}2 \Theta {|}            
  \end{tikzcd}
\end{equation}

\subsubsection{Osculating projections}
  We recall here a generalization of linear projections that will allow us to describe the map $p$ in higher genus. For a more complete reference, see for example \cite{massarenti_rischter}.
  Let $X \subset \P^N$ be an integral projective variety of dimension $n$, and $p \in X$ a smooth point. Let
\begin{align*}
  \phi: \mathcal{U} \subset \mathbb{C}^n &\longrightarrow \mathbb{C}^N \\
  (t_1, \ldots, t_n) &\longmapsto \phi(t_1, \ldots, t_n)
\end{align*}
be a local parametrization of $X$ in a neighborhood of $p = \phi(0) \in X$. For $m \geq 0$, let $O^m_p$ be the affine subspace of $\mathbb{C}^N$ passing through $p \in X$ and generated by the vectors $\phi_I(0)$, where $\phi_I$ is a partial derivative of $\phi$ of order $\leq m$.  

  By definition, the \emph{$m$-osculating space} $T_p^m X$ of $X$ at $p$ is the projective closure in $\P^N$ of $O^m_p$. 
  The \emph{$m$-osculating projection} $$\Pi_p^m:X \subset \P^N \dashrightarrow \P^{N_m}$$ is the corresponding linear projection with center $T_p^m$.

\subsubsection{Osculating projections of $\MGIT$} \label{sec:furtherbaselocus} 

In this section we show how the map $\pPN$ induces an osculating projection on $\MGIT$.


  


\begin{lemma} \label{lemma:vanishing_forms}
  Let $Q$ be a $r$-form in $\P^n$ vanishing at the points $P_1$ and $P_2$ with multiplicity $l_1$ and $l_2$ respectively. Then, $Q$ vanishes on the line passing through $P_1$ and $P_2$ with multiplicity at least $l_1 + l_2 - r$.
\end{lemma}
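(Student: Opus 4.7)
The plan is to reduce the statement to a monomial computation by choosing adapted coordinates on $\P^n$. Take homogeneous coordinates $(x_0 : x_1 : \cdots : x_n)$ so that $P_1 = [1 : 0 : \cdots : 0]$, $P_2 = [0 : 1 : 0 : \cdots : 0]$, and the line $\ell = \overline{P_1 P_2}$ is cut out by $x_2 = \cdots = x_n = 0$. The multiplicity of $Q$ along $\ell$ is then the smallest total degree in the ``transverse'' variables $x_2, \ldots, x_n$ appearing in $Q$.

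Concretely, I would expand
\begin{equation*}
Q \;=\; \sum_{\alpha \in \mathbb{Z}^{n-1}_{\geq 0}} x_2^{\alpha_2} \cdots x_n^{\alpha_n}\, Q_\alpha(x_0, x_1),
\end{equation*}
where each $Q_\alpha$ is homogeneous of degree $r - |\alpha|$ in $(x_0, x_1)$. The multiplicity of $Q$ along $\ell$ equals $\min\{|\alpha| : Q_\alpha \not\equiv 0\}$, so I must show that $Q_\alpha \equiv 0$ whenever $|\alpha| < l_1 + l_2 - r$.

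The key step is to translate the hypotheses into linear conditions on the coefficients. In the affine chart $x_0 = 1$ centered at $P_1$, vanishing with multiplicity $l_1$ at $P_1$ forces every monomial $x_1^{a_1} x_2^{\alpha_2} \cdots x_n^{\alpha_n}$ appearing in $Q$ to satisfy $a_1 + |\alpha| \geq l_1$. Symmetrically, working in the chart $x_1 = 1$ at $P_2$ gives $a_0 + |\alpha| \geq l_2$ for every monomial $x_0^{a_0} x_2^{\alpha_2} \cdots x_n^{\alpha_n}$ appearing in $Q$. Hence, for any $\alpha$ with $Q_\alpha \not\equiv 0$, writing $Q_\alpha = \sum_{a_0 + a_1 = r - |\alpha|} c_{a_0, a_1}\, x_0^{a_0} x_1^{a_1}$, there must exist some non-zero coefficient $c_{a_0, a_1}$ with $a_0 \geq l_2 - |\alpha|$ and $a_1 \geq l_1 - |\alpha|$. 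Adding these and using $a_0 + a_1 = r - |\alpha|$ yields
\begin{equation*}
r - |\alpha| \;\geq\; (l_1 - |\alpha|) + (l_2 - |\alpha|),
\end{equation*}
i.e.\ $|\alpha| \geq l_1 + l_2 - r$, which is exactly the desired bound.

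I do not expect any real obstacle: the whole lemma is essentially a bookkeeping exercise, and the only care needed is in handling the case $l_1 + l_2 - r \leq 0$ (where the assertion is vacuous) and in being explicit that ``multiplicity along $\ell$'' means the order of vanishing of $Q$ as a section of $\O_{\P^n}(r)$ along the ideal of $\ell$, which matches the minimum $|\alpha|$ above.
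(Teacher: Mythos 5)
Your proof is correct and complete. Note that the paper does not actually prove this lemma: it simply cites \cite[page 2]{kumar_linearsystems}, so there is no internal argument to compare against. Your monomial bookkeeping is a perfectly valid self-contained proof: with $P_1=[1:0:\cdots:0]$, $P_2=[0:1:0:\cdots:0]$ and $\ell = \{x_2=\cdots=x_n=0\}$, the two point-multiplicity hypotheses translate (after dehomogenizing in the two charts, where distinct monomials of $Q$ stay distinct, so no cancellation can occur) into the conditions $a_1+|\alpha|\geq l_1$ and $a_0+|\alpha|\geq l_2$ on every monomial $x_0^{a_0}x_1^{a_1}x^{\alpha}$ of $Q$, and adding them against $a_0+a_1+|\alpha|=r$ gives $|\alpha|\geq l_1+l_2-r$; since the ideal of a line is generated by a regular sequence of linear forms, membership in $I_\ell^m$ is indeed equivalent to $\min|\alpha|\geq m$, so your identification of ``multiplicity along $\ell$'' is the right one. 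For contrast, the argument in Kumar's reference is the coordinate-free version of the same count: any partial derivative of $Q$ of order $k< l_1+l_2-r$ is a form of degree $r-k$ vanishing at $P_1$ and $P_2$ with multiplicities at least $l_1-k$ and $l_2-k$, whose sum exceeds $r-k$, so its restriction to $\ell$ (a binary form of degree $r-k$ with too many zeros) vanishes identically; hence all such derivatives vanish on $\ell$ and $Q\in I_\ell^{\,l_1+l_2-r}$. The two routes are equivalent in substance; yours makes the linear-algebra content explicit, while the derivative argument avoids choosing coordinates. You correctly flag that the statement is vacuous when $l_1+l_2\leq r$.
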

\begin{proof}
  See, for example, \cite[page 2]{kumar_linearsystems}.
\end{proof}
 
Let us now consider the linear system $|\I_{\Sec^N}(g)|$ on $\pPN$ (see Section \ref{sec:restriction_map}). 
The forms in $|\I_{\Sec^N}(g)|$ vanish with multiplicity $g - 1$ along the points of $C$ (see Lemma \ref{prop:linear_systems_coincide}). By Lemma \ref{lemma:vanishing_forms}, these forms vanish then with multiplicity $(g-1) + (g-1) - g = g-2$ along the secant lines $l$ cutting out the hyperelliptic pencil. Thus, these forms vanish with multiplicity $g-2$ on the curve $\Gamma$. 

  Let us also consider the linear system $|\I_{\Sec^{g-2}(N)}(g)|$. Let $\I(\Gamma) \subset |\I_{\Sec^{g-2}(N)}(g)|$ be the partial linear system of forms vanishing (with multiplicity 1) along $\Sec^{g-2}(N)$, and with multiplicity $g - 2$ along $\Gamma$. By our previous observation and Lemma \ref{lem:base_locus}, we have the following inclusions of linear systems:
  
\begin{align*}
\Rlin \subset  |\I_{\Sec^N}(g)| \subset \I(\Gamma) \subset |\I_{\Sec^{g-2}(N)}(g)|.
\end{align*}

These inclusions yield a factorization 

\begin{equation} \label{diag:Kumar}
  \begin{tikzcd}
    \PN \arrow[r,dashed,"h_N"] \arrow[rrd,swap,dashed,"\pPN"] & \MGIT \subset |\I_{\Sec^{g-2}(N)}(g)|^*  \arrow[r,dashed,"\pi_N"] &  {|\I(\Gamma)|}^* \arrow[d,dashed,"l_N"] \\
    &                       &                    {\theta(\SU)}
  \end{tikzcd}
\end{equation} 

  The first map $h_N$ is the one defined in Section \ref{sec:h_N}, its image is the GIT quotient $\MGIT$.
  According to Proposition \ref{prop:RNC}, this map contracts the curve $\Gamma$ to a point $h_N(\Gamma)$. 

\begin{proposition} \label{prop:fact_restringida}
  The map $\pi_N$ is the $(g-3)$-osculating projection $\Pi^{g-3}_{P}$ with center the point $w = h_N(\Gamma)$.
\end{proposition}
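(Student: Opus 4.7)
The plan is to show that $\pi_N$ and $\Pi_P^{g-3}$ coincide by proving that they are defined by the same linear subsystem of the hyperplane class of $\MGIT$. By construction, $h_N$ is the map attached to the linear system $|\I_{\Sec^{g-2}(N)}(g)|$, so $\MGIT$ sits in $|\I_{\Sec^{g-2}(N)}(g)|^*$ and its hyperplane sections correspond bijectively to elements of $|\I_{\Sec^{g-2}(N)}(g)|$. Under this identification $\pi_N$ is given by the partial subsystem $|\I(\Gamma)|$ of forms vanishing on $\Gamma$ with multiplicity $\geq g-2$. On the other hand, by the very definition of the $(g-3)$-osculating projection, $\Pi_P^{g-3}$ is the projection from $T_P^{g-3}\MGIT$, defined by the hyperplane sections of $\MGIT$ whose restriction to $\MGIT$ vanishes at $P$ to order $\geq g-2$.

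The core of the argument is then a local comparison of vanishing orders through $h_N$. By Proposition \ref{prop:RNC}, the generic fibers of $h_N$ are the rational normal curves through the $2g$ points of $N$, and by Lemma \ref{lem:gamma} the curve $\Gamma$ is one such fiber, which gets contracted to $P=w$. At a generic point $q \in \Gamma$, the map $h_N$ is therefore a smooth morphism onto a neighborhood of $P$ in $\MGIT$. I would then pick analytic local coordinates $(\xi, \eta_1, \ldots, \eta_{2g-3})$ centered at $q$ in which $\Gamma = \{\eta_1 = \cdots = \eta_{2g-3} = 0\}$ and $h_N$ reads $(\xi, \eta) \mapsto (\eta_1, \ldots, \eta_{2g-3})$. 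For any linear form $f$ in the ambient projective space of $\MGIT$, this shows $h_N^{*}(f) = f(\eta)$ locally (up to a unit), so $h_N^{*}(f)$ lies in the $m$-th power $\mathfrak{I}_{\Gamma,q}^m$ of the ideal of $\Gamma$ at $q$ if and only if $f|_{\MGIT}$ lies in $\mathfrak{m}_{P}^m$. This yields the desired equivalence: a form in $|\I_{\Sec^{g-2}(N)}(g)|$ vanishes on $\Gamma$ with multiplicity $\geq m$ if and only if the associated hyperplane section of $\MGIT$ vanishes at $P$ to order $\geq m$. Applying this with $m = g-2$ identifies $|\I(\Gamma)|$ with the linear system defining $\Pi_P^{g-3}$, and the proposition follows.

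The main technical point to verify is that $h_N$ is genuinely a smooth fibration at a generic point of $\Gamma$; this requires $\Gamma$ not to be contained in the indeterminacy locus of $h_N$, and $P$ to be a smooth point of $\MGIT$. Both conditions are easy to check: $\Gamma$ is a non-degenerate rational normal curve of degree $2g-2$ in $\PN$ by Lemma \ref{lem:gamma}, while the base locus $\Sec^{g-2}(N)$ is a finite union of $(g-2)$-planes, so $\Gamma \cap \Sec^{g-2}(N)$ is a finite set, and $P = w$ is a generic point of $\MGIT$ by the genericity of $N$. One should also keep track of the fact that the linear form $f$ is only defined up to rescaling by a unit, but this does not affect the computation of the vanishing order.
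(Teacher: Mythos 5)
Your proposal is correct and follows essentially the same route as the paper: both identify the linear system $|\I(\Gamma)|$ defining $\pi_N$ with the hyperplane sections of $\MGIT$ vanishing to order $\geq g-2$ at $w = h_N(\Gamma)$, which by definition gives the $(g-3)$-osculating projection. The paper's proof merely asserts that the base locus of $\pi_N$ is the point $w$ with multiplicity $g-2$, whereas you justify the transfer of the multiplicity $g-2$ from $\Gamma$ down to $w$ via a local submersion argument at a generic point of $\Gamma$ (checking that $\Gamma$ is not in the indeterminacy locus and that $w$ is a smooth point) — a detail the paper leaves implicit.
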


\begin{proof} 
  From the geometric description of the linear systems $\I(\Gamma)$ and $|\I_{\Sec^{g-2}(N)}(g)|$ (Prop. \ref{prop:h_N_is_composition} and \ref{prop:RNC}), the base locus of the map $\pi_N$ is the point $w = h_N(\Gamma)$, with multiplicity $g-2$. In particular, since the forms in $\I(\Gamma)$ vanish with multiplicity $g - 2$ along $\Gamma$, the order the projection $\pi_N$ is $g-3$. 
\end{proof}

 According to Proposition \ref{prop:RNC}, the map $h_N$ contracts the curve $\Gamma$ to a point $w$ in $\MGIT$ representing an ordered configuration of the $2g$ marked points $N$. This point in turn corresponds to a hyperelliptic genus $(g - 1)$ curve $C_w$ together with an ordering of the Weierstrass points. Now recall from Sec. \ref{sec:h_N} that the bottom composed map of Diag. \ref{gitcontraction} is the map $h_N$. The rational normal curve $\Gamma\subset \PN$ is contracted to a point $e_0\in \P^{2g-3}$ s.t. $w=i_\Omega(e_0)$. Recall, once again from Sec. \ref{sec:h_N} that $\P^{2g-3}$ also contains the $2g-1$ points $\tau_k\circ \Cr_k(H_i)$, imagese of the hyperplanes $H_i \subset \PN$, with $i\neq k$ and $1\leq i \leq 2g$. Let us lable them $e_1, \dots, e_{2g-1}$.
  Let now $\Lambda$ be the partial linear system of $\Omega$ consisting of the $(g - 1)$-forms in $\P^{2g-3}$ vanishing with multiplicity $g - 2$  in all the points $e_0, e_1, \ldots, e_{2g - 1}$. This linear sub-system induces an osculating linear projection $\kappa: \MGIT \tto  \Lambda ^*$, as seen in Thm. \ref{th:kumar_double_cover}.

\begin{theorem}  \label{th:pi=kappa}
  The map $\pi_N$ coincides with the map $\kappa$. In particular, the map $\pi_N$ is of degree 2.
\end{theorem}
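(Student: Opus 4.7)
The plan is to identify the linear sub-system defining $\pi_N$ with the one defining $\kappa$, under the natural isomorphism of ambient linear systems supplied by Proposition \ref{prop:h_N_is_composition}; the degree $2$ statement then follows immediately from Theorem \ref{th:kumar_double_cover}.

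Fix any $k \in \{1, \dots, 2g\}$. By Proposition \ref{prop:h_N_is_composition}, $h_N = i_{\Omega} \circ \tau_k \circ \Cr_k$, so the birational map $\tau_k \circ \Cr_k \colon \PN \tto \P^{2g-3}$ pulls $\Omega$ back to $|\I_{\Sec^{g-2}(N)}(g)|$ and identifies the two resulting embeddings of $\MGIT$. Under this identification it suffices to prove $\I(\Gamma) = \Lambda$. By Lemma \ref{lem:gamma}, $\Gamma \subset \PN$ is a non-degenerate rational normal curve through the $2g$ general points $p_1,\dots,p_{2g}$, so Lemma \ref{lem:composition_contracts} shows that $\tau_k \circ \Cr_k$ contracts $\Gamma$ to a single point $e_0 \in \P^{2g-3}$, and $i_{\Omega}(e_0) = h_N(\Gamma) = w$; thus $e_0$ is precisely the additional general base point featuring in Kumar's definition of $\Lambda$.

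The crux of the argument is then a multiplicity comparison: a form $F \in \Omega$ vanishes with multiplicity $g-2$ at $e_0$ if and only if $(\tau_k \circ \Cr_k)^* F \in |\I_{\Sec^{g-2}(N)}(g)|$ vanishes with multiplicity $g-2$ along $\Gamma$. Since $\tau_k \circ \Cr_k$ is not regular, the cleanest way to see this is to pass to the common Kapranov resolution $b \colon \overline{\M}_{0,2g+1} \to \PN$, on which the proper transform $\widetilde{\Gamma}$ of $\Gamma$ is a fiber of the forgetful map $\lambda_k$ over a smooth point $w' \in \overline{\M}_{0,2g}$ with $b_k(w') = e_0$; a local computation at a generic point of $\widetilde{\Gamma}$ then matches the two vanishing orders, the equality descending to $\PN$ and $\P^{2g-3}$ through $b$ and $b_k$ respectively. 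This equates $\I(\Gamma)$ with $\Lambda$, hence $\pi_N = \kappa$, and Theorem \ref{th:kumar_double_cover} gives $\deg \pi_N = 2$.

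The main obstacle is precisely this multiplicity-transfer step: the intuition that vanishing of order $m$ at $e_0$ corresponds to vanishing of order $m$ along a contracted fiber $\Gamma$ is clear, but turning it into a rigorous argument requires controlling the indeterminacies of the Cremona composition via the Kapranov model, rather than arguing directly on $\PN$ or $\P^{2g-3}$.
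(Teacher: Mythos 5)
Your proposal is correct and follows essentially the same route as the paper: both reduce the statement to identifying the linear system $\I(\Gamma)$ with Kumar's $\Lambda$ under the identification $h_N = i_{\Omega} \circ \tau_k \circ \Cr_k$ of Proposition \ref{prop:h_N_is_composition}, and then invoke Theorem \ref{th:kumar_double_cover} for the degree. The only difference is one of emphasis: the paper carries out the comparison by pulling back $|\O_{\MGIT}(1) - (g-2)w|$ through $i_{\Omega}$, treating the multiplicity transfer between the point $w$ and the contracted curve $\Gamma$ as already settled in Proposition \ref{prop:fact_restringida}, whereas you isolate that transfer as the crux and propose to justify it on the Kapranov resolution.
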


\begin{proof}
Consider the GIT quotient $\MGIT$ embedded in $|\Omega|^*$ as we have seen in Thm. \ref{th:kumar_Omega}. The osculating projection $\pi_N$ is given by the linear system $|\O_{\MGIT}(1) - (g - 2) w|$ of hyperplanes vanishing in $w$ with multiplicity $g-2$. By definition of $\Omega$, this linear system pulls back via $i_{\Omega}$ to the linear system of $(g - 1)$-forms in $\P^{2g - 3}$ vanishing with multiplicity $g - 2$ in $e_1, \ldots, e_{2g - 1}$, and also with multiplicity $g - 2$ in $e_0$, which is precisely $\Lambda$. Hence, the map $\pi_N$ is the map induced by the same linear system as $\kappa$ (see Thm \ref{th:kumar_double_cover}).  
\end{proof}

We will show in the next Section that the map $l_N$ from Diag \ref{diag:Kumar} is actually birational,
and that the map $\pi_N$ coincides with the restriction of the map $\theta$.

\subsection{The hyperelliptic theta map and Rational involutions on $\MGIT$ and $\SU$ } \label{sec:the_global_description}


  The resolution $\widetilde{\p}$ of $\p$ factors through the degree 2 map $\theta$ as shown in Diagram \ref{diag:global}. In the preceding Section we have shown that, when we restrict $\pPN$ to $\PN$, it factors through the degree 2 map $\pi_N$. Now we link these two factorizations. The identification of maps in the following claim must be intended as rational maps, since for example $\pi_N$ is not everywhere defined.
  
\begin{theorem}
  Let $N \in |2D|$ be a general effective divisor. Then, the restricted map $\theta|_{f_D\left(\PN\right)}$ is the map $\pi_N$ up to composition with a birational map. 
\end{theorem}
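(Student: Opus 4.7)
The strategy is to compare two factorizations of the composed map $\pPN$: the definitional factorization $\pPN = \theta|_{f_D(\PN)} \circ f_D|_{\PN}$ on one side, and the factorization $\pPN = l_N \circ \pi_N \circ h_N$ coming from Diagram \ref{diag:Kumar} on the other. If the two intermediate spaces $f_D(\PN)$ and $\MGIT$ can be birationally identified, a diagram chase will extract the desired comparison between the degree-two maps $\theta|_{f_D(\PN)}$ and $\pi_N$.

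The first step is to compare the dominant rational maps $f_D|_{\PN}$ and $h_N$. By Proposition \ref{prop:fibration_in_M}(1), the classifying map $f_D|_{\PN}$ has generic fibers equal to rational normal curves through the $2g$ marked points of $N$; by Proposition \ref{prop:RNC}, the map $h_N$ is dominant onto $\MGIT$ and contracts precisely the same family of rational normal curves. Hence $f_D|_{\PN}$ and $h_N$ share their generic fibers, and I can produce a birational equivalence $\sigma : \MGIT \tto f_D(\PN)$ satisfying $f_D|_{\PN} = \sigma \circ h_N$. Substituting this in the two factorizations of $\pPN$ and cancelling the dominant $h_N$ will yield the identity
\[
\theta|_{f_D(\PN)} \circ \sigma \;=\; l_N \circ \pi_N
\]
of rational maps out of $\MGIT$.

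To conclude, it will suffice to show that $l_N$ is birational, because then $\theta|_{f_D(\PN)} = l_N \circ \pi_N \circ \sigma^{-1}$ with $l_N$ and $\sigma^{-1}$ birational, which is precisely the statement of the theorem. This is a pure degree count on the identity above: by Theorem \ref{th:pi=kappa} the map $\pi_N = \kappa$ is generically $2$-to-$1$, so the right-hand side has generic degree $2\deg(l_N)$; since $\sigma$ is birational, the left-hand side has the same generic degree as $\theta|_{f_D(\PN)}$, which is bounded above by $\deg \theta = 2$ (the theta map being a $2$-to-$1$ finite morphism in the hyperelliptic case, see Section \ref{sec:moduli_vector_bundles_theta}). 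Therefore $\deg(l_N) = 1$ and $l_N$ is birational; as a bonus, we simultaneously learn that $\theta|_{f_D(\PN)}$ stays generically $2$-to-$1$, which is a nontrivial assertion about the interaction of the hyperelliptic involution $i^*$ with the subvariety $f_D(\PN)$.

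The main obstacle I anticipate is the construction of $\sigma$: one has to verify that the rational normal curves appearing as generic fibers in Proposition \ref{prop:fibration_in_M}(1) and in Proposition \ref{prop:RNC} are truly the same family, so that $f_D|_{\PN}$ and $h_N$ factor through one another birationally and not merely after some extra identification. This should follow from the parallel extension-theoretic descriptions of both maps via points of $\PN$, but some care is required around the indeterminacy loci of $f_D|_{\PN}$ and $h_N$ and around the locus where the classifying map $f_D$ has exceptional fibers (Proposition \ref{prop:exceptional_fibers}); restricting to a sufficiently generic open subset of $\PN$ should be enough.
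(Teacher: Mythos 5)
Your proposal is correct and follows essentially the same route as the paper: the paper also compares the factorization $\pPN=\theta\circ f_D|_{\PN}$ with $l_N\circ\pi_N\circ h_N$, identifies the connected-fiber parts (it phrases this as uniqueness of the Stein factorization, using that the generic $f_D$-fiber is the $\P^1$ of extensions and the generic $h_N$-fiber is a rational normal curve), and then runs exactly your degree count $2\deg(l_N)=\deg(\theta|_{f_D(\PN)})\leq\deg\theta=2$ to conclude that $l_N$ is birational. The fiber-matching step you flag as the main obstacle is handled in the paper precisely by this Stein-factorization uniqueness rather than by a direct appeal to Proposition \ref{prop:fibration_in_M}, but the substance is the same.
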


\begin{proof}
 Let us place ourselves on the open set $\genSU \subset \SU$ of general stable bundles. First we remarl that the factorization $\widetilde{\p} =\theta \circ \widetilde{f_D}$ of Diagram \ref{diag:global} is the Stein factorization of the map $\widetilde{\p}$ along $\widetilde{\PD}$. 
  Indeed, the map $\theta$ is of degree 2 as explained in Section \ref{sec:introduction}. Moreover, the preimage of a general stable bundle $E$ by the map $f_D$ is the $\P^1$ arising as the projectivisation of the space of extensions of the form
  
\begin{align*}
  \quad  0 \to \O(-D)) \to E \to \O(D) \to 0.
\end{align*}
 
  In particular, the fibers of $\widetilde{f_D}$ over $\genSU$ are connected.  

  The restriction of $\p$ to $\PN$ factors through the maps $h_N$ and $\pi_N$ (see Diagrams \ref{diag:Kumar}), followed by the map $l_N$. 
  According to Proposition \ref{prop:RNC}, the fibers of $h_N$ are rational normal curves, thus connected. Moreover, the map $\pi_N$ is degree 2 by Theorem \ref{th:pi=kappa}. 
  By unicity of the Stein factorization, we have our result. 

  Comparing with the factorization  $\widetilde{\p} =\theta \circ \widetilde{f_D}$, we see that $l_N$ cannot have relative dimension $> 0$. Hence, $l_N$ is a finite map. Since the degree of the map $\theta$ in the Stein factorization is 2, which is equal to the degree of $\pi_N$, we have that $l_N$ cannot have degree $> 1$.
  In particular, we have that the map $l_N$ is a birational map. 
\end{proof}

From this description, the arguments of Section \ref{sec:furtherbaselocus} and Thm. \ref{th:kumar_double_cover} we obtain the following


\begin{theorem} \label{th:fibration_kummer}
  The restriction of $\theta$ to the general fiber of $p_{\mathbb{P}_c}$ ramifies on the Kummer variety of dimension $g - 1$, obtained from the Jacobian of the hyperelliptic curve that is the double cover of $\mathbb{P}^1$ ramified along the $2g$ points represented by $P=h_N(\Gamma)$.
\end{theorem}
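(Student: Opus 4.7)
The plan is to read off the statement essentially by concatenating the identifications established in the rest of Section 5 together with Kumar's result, and then keeping track of dimensions. First I would use Proposition \ref{prop:fibre=image} to identify the general fiber of $p_{\P_c}$ over $N\in|2D|$ with the closure in $\theta(\SU)$ of $\pPN(\PN)$. This reduces the problem to understanding the ramification of $\theta$ restricted to $f_D(\PN)$, i.e.\ to the map $\pPN$ after dividing out by the fiber of $f_D$.

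Next I would invoke the theorem immediately preceding the statement: the restriction $\theta|_{f_D(\PN)}$ is equal, up to composition with the birational map $l_N$ of Diagram \ref{diag:Kumar}, to the map $\pi_N$ on $\MGIT$. Because $l_N$ is birational (in particular, an isomorphism in codimension one on the source), the ramification locus of $\theta|_{f_D(\PN)}$ is birational to the ramification locus of $\pi_N$.

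Now I would apply Theorem \ref{th:pi=kappa} to identify $\pi_N$ with Kumar's osculating projection $\kappa$, centered at the point $w=h_N(\Gamma)\in\MGIT$. By Lemma \ref{lem:gamma} and the discussion following it, $w$ represents the ordered configuration of the $2g$ points of $N$ on the rational normal curve $\Gamma$, i.e.\ the same configuration represented by $P=h_N(\Gamma)$. Therefore the double cover $C_w$ of $\P^1$ ramifying at these $2g$ points is a hyperelliptic curve of genus $g-1$, and Theorem \ref{th:kumar_double_cover} tells us that $\kappa$ is $2$-to-$1$ with ramification locus equal to the Kummer variety $\Kum(C_w)\subset\SUw$.

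Putting these three identifications together, the ramification locus of $\theta$ restricted to the general fiber of $p_{\P_c}$ is birational to $\Kum(C_w)$, which has dimension $\dim\Jac(C_w)=g-1$, yielding the claim. The only slightly delicate point is checking that the birational map $l_N$ really does preserve the ramification divisor; since ramification is a codimension-one phenomenon and $l_N$ is a birational morphism between (the relevant open sets of) the two targets, this is automatic, and I expect it to be the only place requiring a careful word rather than a direct appeal to earlier results.
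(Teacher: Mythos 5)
Your proposal is correct and follows exactly the route the paper takes: the paper gives no standalone proof of this theorem but simply derives it from the preceding identification of $\theta|_{f_D(\PN)}$ with $\pi_N$ up to the birational map $l_N$, Theorem \ref{th:pi=kappa} identifying $\pi_N$ with $\kappa$, and Kumar's Theorem \ref{th:kumar_double_cover} on the ramification of $\kappa$ along $\Kum(C_w)$. Your extra remark that $l_N$, being birational, does not disturb the (codimension-one) ramification divisor is precisely the point the paper leaves implicit.
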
    

\begin{corollary}
One of the irreducible components of the ramification locus of the theta map is birational to a fibration in Kummer $(g-1)$-folds over $\P^g$.
\end{corollary}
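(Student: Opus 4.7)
The plan is to assemble, as $N$ varies in $|2D|\cong\P^g$, the Kummer $(g-1)$-folds produced fiberwise by Theorem~\ref{th:fibration_kummer} into a single irreducible subvariety $R\subset\SU$, and then verify that $R$ is a full irreducible component of the ramification locus of $\theta$.

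First, I work on a Zariski-open subset $U\subset|2D|$ on which Proposition~\ref{prop:fibration_in_M} and Theorem~\ref{th:fibration_kummer} both apply. The assignment $N\mapsto w=h_N(\Gamma)$ is algebraic in $N$, so the associated hyperelliptic curves $C_w$ fit into a flat family $\mathcal{C}\to U$; its relative Jacobian together with the quotient by the $[-1]$-involution yields a flat family $\mathcal{K}\to U$ of Kummer $(g-1)$-folds. The birational identifications produced by Theorem~\ref{th:pi=kappa} and Section~\ref{sec:the_global_description} are algebraic in $N$ as well, so each $\mathcal{K}_N\cong\Kum(C_w)$ embeds rationally into $p_D^{-1}(N)$ as the ramification locus of $\theta|_{p_D^{-1}(N)}$, and these embeddings glue into a rational map $\mathcal{K}\dashrightarrow p_D^{-1}(U)\subset\SU$. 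I define $R$ as the closure in $\SU$ of its image.

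By construction, $R$ is irreducible, of dimension $g+(g-1)=2g-1$, and birational to the Kummer $(g-1)$-fibration $\mathcal{K}\to U$ over $\P^g$. The containment $R\subset\mathrm{Ram}(\theta)$ is then immediate: a generic $x\in R$ lies in some $p_D^{-1}(N)$ as a ramification point of $\theta|_{p_D^{-1}(N)}$, so the kernel of $d(\theta|_{p_D^{-1}(N)})_x$ lifts to a nonzero kernel of $d\theta_x$, and one concludes by closure. To see that $R$ is an irreducible component, suppose $R\subseteq R'$ with $R'$ an irreducible component of $\mathrm{Ram}(\theta)$. Since $R$ dominates $\P^g$ via $p_D$, so does $R'$, and for general $N\in U$ the intersection $R'\cap p_D^{-1}(N)$ is a closed subvariety of the ramification locus of $\theta|_{p_D^{-1}(N)}$, which by Theorem~\ref{th:fibration_kummer} is the \emph{irreducible} Kummer variety $\Kum(C_w)=R\cap p_D^{-1}(N)$. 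Hence $R\cap p_D^{-1}(N)=R'\cap p_D^{-1}(N)$ over a dense open subset of $\P^g$, which forces $R=R'$.

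The main technical point is the relativization step: while every ingredient (the point $w=h_N(\Gamma)$, the curve $C_w$, its Jacobian, the Kummer quotient, and the birational identification with the ramification locus inside $p_D^{-1}(N)$) is manifestly algebraic in $N$, one must check that they patch into a genuine relative subvariety of $p_D^{-1}(U)\to U$ rather than being specified fiber by fiber. Once this is granted, the irreducibility of $R$, the dimension count $2g-1$, and the maximality argument above are formal.
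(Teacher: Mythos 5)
The paper states this corollary with no written proof, presenting it as an immediate consequence of Theorem~\ref{th:fibration_kummer}; your relativization of the fiberwise Kummer varieties over $|2D|\cong\P^g$, the dimension count, and the containment $R\subset\mathrm{Ram}(\theta)$ all follow the intended route, and your step~4 uses the correct direction of the differential inclusion $\ker d(\theta|_{p_D^{-1}(N)})_x=\ker d\theta_x\cap T_x\,p_D^{-1}(N)\subseteq\ker d\theta_x$.

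There is, however, a genuine gap in your maximality step. You assert that for an irreducible component $R'\supseteq R$ of $\mathrm{Ram}(\theta)$ and general $N$, the intersection $R'\cap p_D^{-1}(N)$ is contained in the ramification locus of the \emph{restricted} map $\theta|_{p_D^{-1}(N)}$. This is the reverse of the implication you used in step~4, and it is false for a general finite morphism: a point can satisfy $\ker d\theta_x\neq 0$ while $\ker d\theta_x\cap T_x\,p_D^{-1}(N)=0$, so $\mathrm{Ram}(\theta)\cap p_D^{-1}(N)$ may a priori be strictly larger than $\mathrm{Ram}(\theta|_{p_D^{-1}(N)})=\Kum(C_w)$, and your identification $R'\cap p_D^{-1}(N)=R\cap p_D^{-1}(N)$ does not follow as written. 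The gap is repairable using a specific feature of this situation that you never invoke: for $C$ hyperelliptic, $\theta$ factors as the degree-2 quotient by the involution $E\mapsto i^*E$ followed by an embedding, so $\mathrm{Ram}(\theta)=\mathrm{Fix}(i^*)$; moreover each fiber $p_D^{-1}(N)$ is $i^*$-stable because $\Delta(E)=\Delta(i^*E)$ (as observed in the proof of Theorem~\ref{th:proyeccion}), hence $\theta|_{p_D^{-1}(N)}$ is again the quotient by $i^*$ and its ramification locus is exactly $\mathrm{Fix}(i^*)\cap p_D^{-1}(N)=\mathrm{Ram}(\theta)\cap p_D^{-1}(N)$. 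With that sentence added, your argument closes; without it, the key inclusion in the maximality step is unjustified.
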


Results from \cite[App. E]{hacen} imply that the ramification locus is in fact non-irreducible.

{}\section{The case $g = 3$} \label{sec:g3}
  
  Let us now illustrate the geometric situation by explaining in detail the first case in low genus. We will often tacitly assume that when we say \em map \rm we mean a rational map.
  Let $C$ be a hyperelliptic curve of genus 3. 
  In this setting, we have that the map $\theta$ factors through the involution $i^*$, and embeds the quotient $\SU / {\langle i^* \rangle }$ in $\P^7 = |2 \Theta|$ as a quadric hypersurface (see \cite{beauville_rang2} and \cite{desale_ramanan}).
  Let $D$ be a general effective divisor of degree 3. 
  The projective space $\P^7_D$, as defined in Section \ref{sec:introduction},  parametrizes the extension classes in $\Ext^1(\O(D), \O(-D))$.
  The classifying map $\p$ is given in this case by the complete linear system $|\I_C^2(3)|$ of cubics vanishing on $C$ with multiplicity 2. 
 The forms from this linear system vanish along the secant lines of $C$, and in particular along the secant lines passing through involution-conjugate points. These form a pencil parametrized by the linear system $|h|$.

  The image of the projection of $\theta(\SU)$ with center $\P_c = \P^3 \subset |2 \Theta|$ is also a $\P^3$, that is identified with $|2 D|$ by Theorem \ref{th:proyeccion}. Let $N \in |2D|$ be a general reduced divisor. By Proposition \ref{prop:fibre=image}, the closure of the fiber $\pproj^{-1}(N)$ is the image via $\p$ of the $\PcuatroN$ spanned by the six points of $N$. 

\subsection{The restriction to $\P_N^4$} \label{sec:factor_g3}
  The base locus of the restricted map $\pPN = \pPcuatroN$ contains $\Sec^N = \Sec^1(C) \cap \P^4_N$ by Lemma \ref{lem:base_locus}. 
  The secant variety $\Sec^1(N) \subset \Sec^N$ is the union of the 15 lines passing through pairs of the 6 points of $N$. According to Lemma \ref{lemma:nonempty_intersection}, the further base locus $\Sec^N \setminus \Sec^1(N)$ is given by the intersections of $\P^4_N$ with the lines spanned by degree 2 divisors $L_2$ on $C$ not contained in $N$ satisfying $\dim|L_2| \geq 1$. By Brill-Noether theory, there is only one linear system of such divisors on a genus 3 curve, namely the hyperelliptic linear system $|h|$ (see, for example, \cite{arbarello_cornalba}, Chapter V). We will review these ideas in Section \ref{sec:lowgenera}. This linear system defines, by the intersections with $\PN$ of the lines spanned by the hyperelliptic pencil, the curve $\Gamma$ that we introduced in Section \ref{sec:gamma}.
  Hence, we have that $\Sec^N = \{15 \text{ lines} \} \cup \Gamma$, and the restricted map $\pPN$ factors as

\begin{equation*}
  \begin{tikzcd}
    \P^4_N \arrow[r,dashed,"h_N"] \arrow[rd,swap,dashed,"\pPN"] & \MGITsix \subset \P^4 \arrow[d,dashed,"p"] \\
    &                                           \P^{3}
  \end{tikzcd}
\end{equation*}

where $h_N$ is the map defined by the complete linear system $|\I_{\Sec^1(N)}(3)|$ of cubics vanishing along the 15 lines defined by the points of $N$, and $p$ is the projection with center the image via $h_N$ of the rational normal curve $\Gamma$.

The image of $\pPN$ is a $\P^3$. Indeed, this image cannot have higher dimension, since the map factors through the projection from a point of $\MGITsix \subset \P^4$. Also, it cannot have dimension strictly smaller than 3 since otherwise the relative dimension of $\pPN$ would be bigger than 1, or equivalently the global map $\varphi_D$ would not surjet
onto $\SU$. Hence, in this case the map $\pPN$ is defined exactly by the
system of cubics in $\P^4_N$ vanishing on $\Sec^N$.

  According to Proposition \ref{prop:RNC}, the image of $h_N$ is the GIT moduli space $\MGITsix$ if $N$ is general and reduced. It is a classical result that this GIT quotient is embedded in $\P^4$ as the Segre cubic $S_3$ (see for instance \cite{dolgachev_ortland}). This 3-fold arises by considering the linear system of quadrics in $\P^3$ that pass through five points in general position, thus it is isomorphic to the blow-up of $\P^3$ at these points, followed by the blow-down of all lines joining any two points. The composition of this map with the projection off a smooth point of $S_3$ gives a $2:1$ rational map $\P^3 \dashrightarrow \P^3$ whose ramification locus is a Weddle surface (\cite{kumar,bolowed}).
  The curve $\Gamma \subset \P^4_N$ is a rational normal curve by Lemma \ref{lem:gamma}, hence $\Gamma$ is contracted to a point $P$ by $h_N$ again by Proposition \ref{prop:RNC}.

  By \cite{bertram} and Lemma \ref{lem:base_locus}, the linear system $|\OO_{S_3}(1)|$ of hyperplanes in $S_3$ is pulled back by $h_N$ to $|\I_{\Sec^1(N)}(3)|$ on $\PcuatroN$. The linear system $|\OO_{S_3}(1) - P|$ of hyperplanes in $S_3$ passing through $P$ is pulled back to the complete linear system $|\I_{\Sec^N}(3)|$ defining $\pPN$. 
  Hence, the map $p$ is the linear projection with center $P$.
  Since $S_3$ is a cubic, the projection $p$ is a degree 2 map. We will see in the next Section that this will be also the case for higher genus. The following proposition resumes what we have seen so far in this Section.

\begin{proposition}
  Let $C$ be a hyperelliptic curve of genus 3. Then, for generic $N$, the restriction of $\pD$ to the subspace $\PN$ is exactly the composition $\kappa \circ h_N$.
\end{proposition}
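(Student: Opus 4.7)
The plan is to show that the two rational maps $\pD|_{\PN}$ and $\kappa \circ h_N$ on $\PcuatroN$ are defined by the same linear system, and hence agree as rational maps. Concretely, I would identify both with the complete linear system $|\I_{\Sec^N}(3)|$ of cubics vanishing along the full base locus $\Sec^N$, which by Section \ref{sec:factor_g3} consists of the $15$ secant lines through pairs of points of $N$ together with the rational normal curve $\Gamma$ introduced in Section \ref{sec:gamma}.

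First, I would verify that the defining sub-system $\Rlin \subseteq |\I_{\Sec^N}(3)|$ of $\pD|_{\PN}$ (Lemma \ref{lem:base_locus}) is in fact the whole system. The dimension count carried out in Section \ref{sec:factor_g3} does exactly this: the image of $\pD|_{\PN}$ must be a $\P^3$, because it factors through the projection of the three-dimensional Segre cubic $S_3 \cong \MGITsix$ from a point, and it cannot drop in dimension without contradicting the dominance of $\varphi_D$ onto $\SU$. Matching dimensions forces $\Rlin = |\I_{\Sec^N}(3)|$.

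Next, I would identify $\kappa \circ h_N$ with the same linear system. By Theorem \ref{th:kumar_Omega}, $h_N$ realises its image as the Segre cubic $S_3 \subset \P^4$, and by Lemma \ref{lem:gamma} together with Proposition \ref{prop:RNC} it contracts $\Gamma$ to the point $P = h_N(\Gamma)$. In genus $3$ the partial linear system $\Lambda$ of Theorem \ref{th:kumar_double_cover} consists of $(g-1)=2$-forms vanishing with multiplicity $g-2 = 1$ at the prescribed points, so on $S_3$ it pulls back to the pencil $|\OO_{S_3}(1) - P|$ of hyperplane sections through $P$. Hence $\kappa$ is simply the linear projection of $S_3$ from $P$, and its pullback to $\PcuatroN$ along $h_N$ is exactly $|\I_{\Sec^N}(3)|$ via Bertram's description recalled in Theorem \ref{th:bertram} and Proposition \ref{prop:linear_systems_coincide}.

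The main subtlety will be this second step: one must ensure that in this genus Kumar's osculating projection $\kappa$ really degenerates to a plain linear projection, so that the birational identification $l_N$ of Diagram \ref{diag:Kumar} becomes the identity and one obtains literal equality $\pD|_{\PN} = \kappa \circ h_N$ rather than equality up to a birational automorphism. This is forced by the fact that the osculating order $g - 3 = 0$ collapses to ordinary linear projection; combined with Theorem \ref{th:pi=kappa} this identifies the map $p$ of Section \ref{sec:factor_g3} with $\kappa$ on the nose, and the proposition follows.
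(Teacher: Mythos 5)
Your proposal is correct and follows essentially the same route as the paper: identify the base locus $\Sec^N$ as the $15$ secant lines plus $\Gamma$ (via Lemma \ref{lemma:nonempty_intersection} and Brill--Noether), force completeness of the defining system $|\I_{\Sec^N}(3)|$ by the dimension count on the image (Segre cubic projected from a point versus dominance of $\varphi_D$), and recognise $\kappa$ as the ordinary linear projection of $S_3 \cong \MGITsix$ from $P = h_N(\Gamma)$ since the osculating order $g-3$ vanishes in genus $3$. This is precisely the argument of Section \ref{sec:factor_g3}, so there is nothing to add.
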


  The point $P$ in $\MGITsix$ represents a rational curve with 6 marked points.
  Let $C'$ be the hyperelliptic genus 2 curve constructed as the double cover of this rational curve ramified in these 6 points.
  According to Theorem 4.2 of \cite{kumar}, the Kummer variety $\Kum(C')$ is contained in the image of $p$, and it is precisely the ramification locus of $\pi$. Recall that, when $g=3$ , the linear system $|2D|$ is a $\P^3$. 
  By Proposition \ref{prop:fibre=image}, the image of $\PcuatroN$ by $\p$ is the closure of the fiber $\pproj^{-1}(N)$. 
  For each point $N$ in $|2D|$, this image is $\P^3 = |\I^2_{\Sec^N}(3)|^*$, which is the image of the Segre variety $\MGITsix$ under the projection with center $P$. Thus, the image of the global map $\p$ is birational to a $\P^3$-bundle over $|2D| = \P^3$. Of course this is also the case since the image of the theta map is a quadric hypersurface in $\P^7$ \cite{desale_ramanan}.
  
{}\section{Explicit descriptions in low genera} \label{sec:lowgenera}

In this Section we will go through an explicit description of the classifying maps and how they factor through forgetful linear systems and osculating projections, for low values of the genus $g(C)$ of the hyperelliptic curve.
In these cases the map remain fairly simple. These computations seem completely out of reach without the help of a computer for higher genus.

\medskip

 Recall from Section \ref{sec:restriction_map} that the intersection $ \Sec^N = \Sec^{g-2}(C) \cap \PN$ arises naturally as part of the base locus of the restricted map $\pPN$. 
  The subvarieties $\Sec^{g-2}(N)$ and $\Gamma$ of $\Sec^N$ yield the factorization of $\pPN$ through the maps $h_N$ and $\pi_N$ of Proposition \ref{prop:fact_restringida}.
  Let us now describe the set 

\begin{align*}
  {\Sec^N}' = \Sec^N \setminus \{ \Gamma \cup \Sec^{g-2}(N) \}.
\end{align*}

  This set is empty for $g = 3$, and the map $\pPN$ is exactly the composition of $h_N$ and $\pi_N$, as described in Section \ref{sec:g3}.
  In higher genus, the existence of non-empty additional base locus $\Sec^N$ corresponds to the fact that the map $\pPN$ may not be exactly the composition of the maps $h_N$ and $\pi_N$. In other words, the map $l_N$ from Diag. \ref{diag:Kumar} may not be non-trivial in higher genus.

  This supplementary base locus is given by the intersections of $\PN$ with $(g-2)$-dimensional $(g-1)$-secant planes of $C$ in $\PD$, which are not already supported on $\Sec^{g-2}(N)$ and $\Gamma$. According to Lemma \ref{lemma:nonempty_intersection}, these intersections are given by effective divisors $L_{g-1}$ on $C$ of degree $g-1$, not contained in $\PN$, and satisfying $\dim |L_{g-1}| \geq 1$. Also by Lemma \ref{lemma:nonempty_intersection}, we obtain $\dim (\langle L_{g-1} \rangle \cap \PN)=\dim |L_{g-1}| - 1$.

  We will now give account of the situation in low genera. 

\subsubsection*{Case $g = 4$} 
  In this case, the divisor $N$ is of degree 8 and the map
\begin{align*}
  \p|_N : \P_N^6 \subset \P_D^{10} \tto |2 \Theta| = \P^{15}
\end{align*} 
is given by the linear system $|\I_C^3(4)|$. This map factors through the map $\pi_N$ which coincides with the $1$-osculating projection $\Pi^1_w$, where $w = h_N(\Gamma)$.
  
  We are looking for degree 3 divisors $L_3$ with $\dim |L_3| \geq 1$. These satisfy all $\dim |L_3| = 1$ and are of the form 
\begin{align*}
  L_3 = h + q \qquad \text{for } q \in C,
\end{align*}
where $h$ is the hyperelliptic divisor. Let $p$ be a point of $C$. Then $L_3 = p + i(p) + q$. Since $\dim |L_3| = 1$, the secant plane $\P^2_{L_3}$ in $\P_D^{10}$ spanned by $p$, $i(p)$ and $q$ intersects $\P^6_N$ in a point. But this point necessarily lies on $\Gamma$, since the line passing through $p$ and $i(p)$ is already contained in this plane. Hence, we do not obtain any additional locus.

\subsubsection*{Case $g = 5$}
  In this case, the divisors $L_4$ of degree 4 are all of the form 
\begin{align*}
  L_4 = h + q + r \qquad \text{for } q, r \in C,
\end{align*}
and satisfy $\dim |L_4| = 1$. Thus, the corresponding secant $\P^3_{L_4}$ spanned by $p$, $i(p)$, $q$ and $r$ intersects $\P^{8}_N$ in a point. As before, this point lies on $\Gamma$, thus we do not obtain any additional locus.
The upshot is the following

\begin{proposition}
  Let $C$ be a hyperelliptic curve of genus 4 or 5, then $\pD$ is defined by a (possibly equal) linear subsystem of the one defining $\kappa \circ h_N$, and set-theoretically the base loci of the two linear systems coincide.
\end{proposition}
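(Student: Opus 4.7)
The statement has essentially two parts: (i) $\pD|_{\PN} = \pPN$ is defined by a linear subsystem of that defining $\kappa \circ h_N$, and (ii) the two base loci coincide set-theoretically. My plan is to reduce both to a base-locus computation, with the heavy lifting being a Brill–Noether style classification of special pencils of low degree.

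First, I would set up the inclusion of linear systems. By Lemma \ref{lem:base_locus}, $\pPN$ is given by a subsystem $\Rlin \subset |\I_{\Sec^N}(g)|$, while by Diagram \ref{diag:Kumar} (and Theorem \ref{th:pi=kappa}), $\kappa \circ h_N$ is given by $\I(\Gamma) \subset |\I_{\Sec^{g-2}(N)}(g)|$. Applying Lemma \ref{lemma:vanishing_forms} to the forms of $|\I_{\Sec^N}(g)|$, which vanish with multiplicity $g-1$ along $C$, yields multiplicity $(g-1)+(g-1)-g = g-2$ along every secant line $\overline{p, i(p)}$, and hence along $\Gamma$. This gives $|\I_{\Sec^N}(g)| \subset \I(\Gamma)$ and therefore $\Rlin \subset \I(\Gamma)$, proving (i).

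For (ii), the inclusion $\Gamma \cup \Sec^{g-2}(N) \subset \Sec^N$ is immediate, so I need the reverse inclusion set-theoretically. By Lemma \ref{lemma:nonempty_intersection}, any point of $\Sec^N \setminus \Sec^{g-2}(N)$ lies on some span $\langle L_{g-1}\rangle$ for an effective divisor $L_{g-1}$ of degree $g-1$ with $\dim|L_{g-1}|\geq 1$, not contained in $N$. The main step is to classify such divisors on a hyperelliptic curve. Clifford's theorem and the standard description of special linear series on hyperelliptic curves (every $g^1_d$ with $d \leq g$ is contained in the hyperelliptic system plus a fixed part) force $L_{g-1} = h + E$ for an effective $E$ of degree $g-3$. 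Concretely, for $g=4$ this gives $L_3 = p + i(p) + q$, and for $g=5$ this gives $L_4 = p + i(p) + q + r$.

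Now I would conclude by the geometric argument already sketched in the excerpt: in each case, $\dim(\langle L_{g-1}\rangle \cap \PN) = \dim|L_{g-1}|-1 = 0$, so the intersection is a finite scheme, generically a single point. Since the secant line $\overline{p,i(p)}$ is contained in $\langle L_{g-1}\rangle$ and already meets $\PN$ in the point $Q_p \in \Gamma$, that single intersection point must be $Q_p$. Hence $\Sec^N \setminus \Sec^{g-2}(N) \subset \Gamma$, giving set-theoretical equality of base loci.

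The main obstacle is step (ii)'s classification: one must be sure that Clifford/Martens for hyperelliptic curves really exhausts all pencils of degree $g-1$, and that no further components of $\Sec^N$ arise from non-reduced or degenerate $L_{g-1}$. This is why the argument is limited to $g=4,5$: at $g=6$ already one finds effective pencils of degree $5$ of the form $2h + q$ whose spans are not automatically contained in $\Gamma$-secants, and so one would need a finer analysis (or a computer-assisted computation, as alluded to at the start of Section \ref{sec:lowgenera}).
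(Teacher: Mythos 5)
Your proposal is, in substance, the paper's own proof. Part (i) is the chain of inclusions $\Rlin \subset |\I_{\Sec^N}(g)| \subset \I(\Gamma) \subset |\I_{\Sec^{g-2}(N)}(g)|$ already established in Section \ref{sec:furtherbaselocus} via Lemma \ref{lemma:vanishing_forms}, and part (ii) is exactly the case analysis of Section \ref{sec:lowgenera}: classify the effective $L_{g-1}$ with $\dim|L_{g-1}|\geq 1$ as $h+E$ using the hyperelliptic structure, note that $\langle L_{g-1}\rangle$ contains the secant line $\overline{p,i(p)}$, and conclude via Lemma \ref{lemma:nonempty_intersection} that the intersection with $\PN$ is the point $Q_p\in\Gamma$.

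One caveat, which your write-up shares with the paper's: the blanket claim that $\dim|L_{g-1}|=1$, hence that $\langle L_{g-1}\rangle\cap\PN$ is a single point, fails in genus $5$ for $L_4=2h=p+i(p)+q+i(q)$, since $h^0(C,2h)=3$ and so $\dim|L_4|=2$. By Lemma \ref{lemma:nonempty_intersection} the intersection $\langle L_4\rangle\cap\PN$ is then a \emph{line}, namely the secant $\overline{Q_pQ_q}$ of $\Gamma$, which for generic $p,q$ lies neither on $\Gamma$ nor on $\Sec^{3}(N)$. Consequently your concluding assertion that $\Sec^N\setminus\Sec^{g-2}(N)\subset\Gamma$ is literally false for $g=5$: the locus $\Sec^N$ contains the secant variety $\Sec^1(\Gamma)$. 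The proposition as stated survives, because a degree-$5$ form vanishing to order $3$ along $\Gamma$ also vanishes (to order $3+3-5=1$, again by Lemma \ref{lemma:vanishing_forms}) on every secant line of $\Gamma$, so $\Sec^1(\Gamma)$ is already contained in the base locus of $\I(\Gamma)$ and the \emph{set-theoretic} equality of the two base loci is unaffected; but you should add this case to the classification rather than rely on the uniform claim $\dim|L_{g-1}|=1$.
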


\subsubsection*{Case $g = 6$}
  Here we have, as in the genus 5 case, the divisors of the form 
  
\begin{align*}
  L_3 = h + q \qquad \text{for } q \in C,
\end{align*}

which do not give rise to any additional base locus. But there is a new family of divisors 

\begin{align*}
  L_5 = 2h + r \qquad \text{for } r \in C.
\end{align*}

  These divisors satisfy $\dim |L_5| = 2$. In particular, the intersection of the $\P^4_{L_5}$ spanned by $p$, $i(p)$, $q$, $i(q)$ and $r$, for $p, q \in C$, with $\P^{10}_N$ is a line $m$ in $\P^{10}_N$. The line $l_1$ (resp. $l_2$) spanned by $p$ and $i(p)$ (resp. $q$, $i(q)$) intersects $\Gamma$ in a point $\widetilde{p}$ (resp. $\widetilde{q}$). In particular, the line $m$ is secant to $\Gamma$ and passes through  $\widetilde{p}$ and $\widetilde{q}$. Since every point of $\Gamma$ comes as an intersection of a secant line in $C$ with $\P^{10}_N$, we obtain the following description of the base locus of $\pPN$:
  
\begin{proposition}
  Let $C$ be a curve of genus $g = 6$. Then, the base locus of the restricted map $\pPN$ contains the ruled 3-fold $\Sec^1(\Gamma)$.
\end{proposition}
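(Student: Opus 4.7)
\emph{Proof plan.} The strategy is to identify which pencils (or higher) $|L_{g-1}|=|L_5|$ on $C$ produce new points in the base locus $\Sec^N\subset\PN$ via Lemma \ref{lemma:nonempty_intersection}, and then to show that the contributions from these pencils sweep out exactly the chordal variety $\Sec^1(\Gamma)$.

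The first step is to classify effective divisors $L_5$ of degree $5$ on the hyperelliptic curve $C$ of genus $6$ with $\dim|L_5|\geq 1$. Since $\deg L_5 = 5 < 2g-2$, $L_5$ is special, and Clifford's theorem gives $\dim|L_5|\leq 2$. On a hyperelliptic curve the classical bounds on special linear series (Clifford plus Martens, or equivalently the fact that any pencil on $C$ is compounded with the hyperelliptic $g^1_2$) force every such $L_5$ to contain the hyperelliptic divisor $h$ up to base points. The resulting list has exactly two families:
\[
L_5 = h+E_3 \ \text{ with } \dim|L_5|=1, \qquad L_5 = 2h + r \ \text{ with } \dim|L_5|=2,
\]
for $E_3$ an effective divisor of degree $3$ not containing $h$, and $r\in C$.

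The second step is to examine the contribution of each family to $\Sec^N\cap\PN$ via Lemma \ref{lemma:nonempty_intersection}, which gives $\dim(\langle L_5\rangle\cap\PN)=\dim|L_5|-1$. For the first family, the intersection is a point; but $\langle L_5\rangle$ contains the hyperelliptic chord $\overline{p,i(p)}$ (where $p+i(p)\subset L_5$), which itself meets $\PN$ at the point $\tilde p = \overline{p,i(p)}\cap\PN\in\Gamma$. Hence the whole intersection coincides with $\{\tilde p\}\subset\Gamma$, and no new base locus appears. For the second family, writing $L_5=p+i(p)+q+i(q)+r$, the span $\P^4_{L_5}$ contains both hyperelliptic chords $\overline{p,i(p)}$ and $\overline{q,i(q)}$, which meet $\PN$ at $\tilde p, \tilde q\in\Gamma$. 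By Lemma \ref{lemma:nonempty_intersection} the intersection $\P^4_{L_5}\cap\PN$ is a line $m$, and since $m$ contains both $\tilde p$ and $\tilde q$, necessarily $m=\overline{\tilde p,\tilde q}$, a secant line of $\Gamma$.

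Finally, I would vary the pair $(p,q)\in C\times C$: the extra point $r$ does not enter the description of $m$, and every point of $\Gamma$ is of the form $\tilde p$ for some $p\in C$. Thus every chord of $\Gamma$ is realized as some such $m$, and the union of these lines is the full chordal variety $\Sec^1(\Gamma)$. Since $\Gamma$ is a non-degenerate rational normal curve in $\PN\cong\P^{10}$, the variety $\Sec^1(\Gamma)$ has the expected dimension $3$. Combining with Lemma \ref{lem:base_locus} (which tells us that $\Sec^N$ is contained in the base locus of $\pPN$) yields the desired inclusion $\Sec^1(\Gamma)\subset\operatorname{Bs}(\pPN)$.

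The most delicate point is the classification in step one: one must rule out exotic $g^1_5$ or $g^2_5$ on a hyperelliptic curve of genus $6$ that are not of the above shape, and this is where the hyperelliptic hypothesis is genuinely used (via a Clifford/Martens-type argument). The subsequent geometric identification of the intersections as secant lines of $\Gamma$ is then a direct application of the lemmas already established.
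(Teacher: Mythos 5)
Your proposal is correct and follows essentially the same route as the paper: classify the special degree-$5$ divisors on the hyperelliptic genus-$6$ curve (all compounded with the $g^1_2$), discard the $g^1_5$ family $h+E_3$ whose span meets $\PN$ only along $\Gamma$, and observe that for $L_5=2h+r$ the line $\langle L_5\rangle\cap\PN$ must be the chord of $\Gamma$ through $\widetilde p$ and $\widetilde q$, sweeping out $\Sec^1(\Gamma)$ as $p,q$ vary. If anything, your explicit Clifford/Martens justification of the classification is slightly more careful than the paper's brief discussion.
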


\bibliography{bib_tocho}
\bibliographystyle{plain}

\end{document}